\newtheorem{thm}[equation]{Theorem}\newtheorem{Thm}[equation]{Theorem}
\newtheorem{Ex}[equation]{Example}
\newtheorem{Q}[equation]{Problem}
\newtheorem{Con}[equation]{Conjecture}
\newtheorem{cor}[equation]{Corollary}
\newtheorem{dfn}[equation]{Definition}\newtheorem{Def}[equation]{Definition}
\numberwithin{equation}{section}
\numberwithin{equation}{section}
\newcommand{\be}{begin{equation}}
\newcommand{\bH}{\mathbb H}
\newcommand{\q}{\mathbb{Q}}
\newcommand{\e}{\epsilon}
\newcommand{\z}{\mathbb{Z}}
\renewcommand{\q}{\mathbb{Q}}
\newcommand{\N}{\mathbb{N}}
\renewcommand{\c}{\mathbb{C}}
\newcommand{\br}{\mathbb{R}}
\newcommand{{\grinv}}{{\Cal G}^{-r}}
\newcommand{\ba}{\backslash}
\newcommand{\G}{\Gamma}
\newcommand{\Haar}{\operatorname{Haar}}
\newcommand{\Cal}{\mathcal}
\renewcommand{\P}{\mathcal P}
\newcommand{\la}{\langle}
\newcommand{\ra}{\rangle}
\newcommand{\SL}{\operatorname{SL}}
\newcommand{\bp}{\begin{pmatrix}}
\newcommand{\ep}{\end{pmatrix}}
\renewcommand{\bp}{{\rm bp}}
\newcommand{\SO}{\operatorname{SO}}
\renewcommand{\H}{\mathcal{H}}
\newcommand{\T}{\operatorname{T}}
\newcommand{\PSL}{\op{PSL}}
\newcommand{\PS}{\rm{PS}}
\newcommand{\op}{\operatorname}
\newcommand{\BR}{\operatorname{BR}}
\newcommand{\Leb}{\operatorname{Leb}}
\newcommand{\BMS}{\operatorname{BMS}}
\renewcommand{\be}{\begin{equation}}
\newcommand{\ee}{\end{equation}}
\newcommand{\B}{B}
\newcommand{\Res}{\op{Res}}
\begin{document}

\title[Counting]{Harmonic analysis, Ergodic theory and Counting for thin groups}

\author{Hee Oh}
\address{Mathematics department, Yale university, New Haven, CT 06520
and Korea Institute for Advanced Study, Seoul, Korea}

\email{hee.oh@yale.edu}

\begin{abstract}
For a geometrically finite group $\G$ of $G=\SO(n,1)$,
we survey recent developments on counting and equidistribution problems for orbits of $\G$
in a homogeneous space $H\ba G$ where $H$ is trivial, symmetric or horospherical.
Main applications are found in an affine sieve on orbits of thin groups as well
as in sphere counting problems for sphere packings invariant under a geometrically finite group.
In our sphere counting problems, spheres can be ordered with respect to a general conformal metric.
\end{abstract}

\maketitle
\tableofcontents

\section{Introduction}
In these notes, we will discuss counting and equidistribution problems for orbits of thin groups
in homogeneous spaces. 

\medskip
Let $G$ be a connected semisimple Lie group and $H$ a closed subgroup. 
We consider the homogeneous space $V=H\ba G$ and fix the identity coset $x_0=[e]$.
Let $\G$ be a discrete subgroup of $G$ such that the orbit $x_0\G$ is discrete, and let $\{B_T:T>1\}$ be
a family of compact subsets of $V$ whose volume tends to infinity as $T\to \infty$.
 Understanding the asymptotic
of $\#(x_0\G\cap B_T)$ is a fundamental problem which bears many applications 
in number theory and geometry. 
We refer to this type of counting problem as an archimedean counting as opposed to a combinatorial counting where the elements
in $x_0\G$ are ordered with respect to a word metric on $\G$; both have been used in applications to sieves (see \cite{Kow} and \cite{Kow1}).


\medskip

When $\G$ is a lattice in $G$, i.e., when $\G\ba G$ admits a finite invariant measure,
this problem is well understood for a large class of subgroups $H$, e.g., when $H$ is a maximal subgroup:
 assuming that the boundaries of $B_T$'s are sufficiently regular and that $H\cap \G$ is a lattice in $H$,
  $\#(x_0\G\cap B_T)$
is asymptotically proportional to the volume of $B_T$, computed  with respect to a suitably normalized $G$-invariant
measure in $V$ (\cite{DRS}, \cite{EM}, \cite{EMS}, etc.).

When $\G$ is a thin group, i.e., a
Zariski dense subgroup which is not a lattice in $G$, it is far less understood in a general setting.
In the present notes, we focus on the case when
 $G$ is the special orthogonal group $\SO(n,1)$ and $H$ is either a symmetric subgroup or a horospherical subgroup
(the case of $H$ being the trivial subgroup will be treated as well).
In this case, we have 
a more or less satisfactory understanding for the counting problem for groups $\G$ equipped with a certain finiteness property,
called the geometric finiteness (see Def. \ref{gfd}).
\medskip

By the fundamental observation of Duke, Rudnick and Sarnak \cite{DRS},
the counting problem for $x_0\G\cap B_T$ can be well approached via the following equidistribution problem:
$$ \text{{\it Describe the asymptotic distribution of $\G\ba \G H g$ in $\G\ba G$ as $g\to \infty$.}} $$

The assumption that $H$ is either symmetric or horospherical
is made so that we can approximate the translate $\G\ba \G H g$ locally 
by the matrix coefficient function in the quasi-regular representation space $L^2(\G\ba G)$.
This idea goes back to Margulis's thesis in 1970 \cite{Ma} (also see \cite{KM}),
but a more systematic formulation in our setting is due to Eskin and McMullen \cite{EM}.

The fact that the trivial representation is contained in $L^2(\G\ba G)$ for $\G$ lattice
is directly related to the phenomenon that, when $\G\ba \G H$ is of finite volume, the translate
$\G\ba \G H g$ becomes equidistributed in $\G\ba G$
 with respect to a $G$-{\it invariant} measure as $g\to \infty$ in $H\ba G$.
When $\G$ is not a lattice, the minimal subrepresentation, that is, the subrepresentation with the 
slowest decay of matrix coefficients
 of $L^2(\G\ba G)$, is infinite dimensional and this makes
the distribution of $\G\ba \G H g$ much more intricate, and
understanding it requires introducing several singular measures in $\G\ba G$.
A key input is the work of Roblin \cite{Ro} on the asymptotic of matrix coefficients
for $L^2(\G\ba G)$, which he proves using ergodic theoretic methods.

\medskip

There is a finer distinction among geometrically finite groups depending on the size of their critical exponents.
When the critical exponent $\delta$ of $\Gamma$ is bigger than $(n-1)/2$,
the work of Lax and Phillips \cite{LP} implies a spectral gap for $L^2(\G\ba \bH^n)$;
which we will call a {\it spherical} spectral gap for $L^2(\Gamma\ba G)$ as it concerns only the spherical part of $L^2(\Gamma\ba G)$.
We formulate a notion of a spectral gap for $L^2(\G \ba G)$ which deals with both spherical and non-spherical parts, based
on the knowledge of the unitary dual of $G$.  
Under the hypothesis that $L^2(\Gamma\ba G)$ admits a spectral gap (this is known to be true if $\delta>n-2$),
developing Harish-Chandra's work on harmonic analysis on $G$ in combination with Roblin's work on ergodic theory,
 we obtain an effective version of the asymptotic of matrix coefficients for $L^2(\G\ba G)$ in joint work with Mohammadi \cite{MO2}.
This enables us to state an effective equidistribution of $\G\ba \G H g$ in $\G\ba G$.
Similar to the condition that $\G\ba \G H$ is of finite volume in the case of $\G$ lattice,
there is also a certain restriction on the size of the orbit $\G\ba \G H$ in order to deduce such an equidistribution.
A precise condition is that the skinning measure $\mu_H^{\PS}$ of $\G\ba \G H$, introduced in \cite{OS}, is finite; roughly speaking $|\mu_H^{\PS}|$ measures 
asymptotically the portion of $\G\ba \G H$ which returns to a compact subset after flowed by the geodesic flow.
When the skinning measure $\mu_H^{\PS}$ is compactly supported, the passage from the asymptotic
of the matrix coefficient to the equidistribution of $\G\ba \G H g$ can be done by the so-called usual
thickening methods. However when  $\mu_H^{\PS}$ is not compactly supported, 
this step requires a genuinely different strategy from the lattice case via the study
of the transversal intersections, carefully done in \cite{OS}.

\medskip
The error term in 
our effective equidistribution result of $\G\ba \G H g$ depends only on the spectral gap data of $\G$. 
This enables us to state
 the asymptotic of $\#(x_0\G_d\gamma \cap B_T)$ effectively in a uniform manner
 for all $\gamma\in \G$ and
for any family $\{\G_d<\G\}$ of subgroups of finite index which has a uniform spectral gap, if they satisfy $\G_d\cap H=\G\cap H$.
 When $\G$ is a subgroup of an arithmetic subgroup of $G$ with $\delta>n-2$,
the work of Salehi-Golsefidy and Varju \cite{SV}, extending an earlier work
of Bourgain, Gamburd and Sarnak \cite{BGS}, 
provides a certain congruence family $\{\G_d\}$ satisfying this condition.

A recent development on an affine sieve  \cite{BGS2}
 then tells us that such a uniform effective counting statement can be used
to describe the distribution of almost prime vectors, as well as to give a sharp upper bound for primes (see Theorem \ref{affine}).

\medskip

One of the most beautiful applications of the study of thin orbital counting problem can
be found in Apollonian circle packings. We will describe this application as well
as its higher dimensional analogues. The ordering in counting circles can be done not only in the Euclidean metric,
 but also in general conformal metrics.
 It is due to this flexibility that we can also describe the asymptotic number of circles
in the ideal triangle of the hyperbolic plane ordered by the hyperbolic area in the last section.

\medskip

\noindent{\bf Acknowledgment} These notes are mostly based on the papers \cite{OS} and \cite{MO2}.
I am grateful to Nimish Shah and Amir Mohammadi for their collaborations.
I would also like to thank Tim Austin for useful discussions concerning the last section
of these notes.

\section{$\G$-invariant conformal densities and measures on $\G\ba G$}
In the whole article, let $(\bH^n, d)$ be the $n$-dimensional real hyperbolic space with constant curvature $-1$
and $\partial(\bH^n)$ its geometric boundary.
Set $G:=\op{Isom}^+(\bH^n)\simeq \SO(n,1)^\circ$. Let $\G$ be a discrete torsion-free subgroup of $G$.
 We assume that $\G$ is non-elementary, or equivalently,
$\G$ has no abelian subgroup of finite index.

We review some basic geometric and measure theoretic
concepts for  $\G$ and define several locally finite Borel measures (i.e., Radon measures) on $\G\ba G$ associated to 
$\G$-invariant conformal densities
on $\partial(\bH^n)$. 
When $\G$ is a lattice, these measures all coincide with each other, being simply
a $G$-invariant measure. But for a thin subgroup $\G$,
they are all different and singular, and appear in our equidistribution and counting statements.
General references for this section are \cite{Ratc}, \cite{Bow}, \cite{Pa}, \cite{Su}, \cite{Sullivan1984}, \cite{Ro}.

\medskip
 
\subsection{Limit set and geometric finiteness}
We denote by $\delta_\G=\delta$ the critical
exponent of $\G$, i.e., the abscissa of convergence of the Poincare series
$\sum_{\gamma\in \G}e^{-s d(o, \gamma(o))}$ for $o\in \mathbb H^n$. We have $0<\delta\le n-1$.
  The limit set $\Lambda(\G)$ is defined to be the set of all accumulation points of $\G(z)$ in 
the compactification $\overline{\bH^n}=\bH^n\cup\partial(\bH^n)$, $z\in \bH^n$.
As $\G$ is discrete, $\Lambda(\G)$ is contained in the boundary $\partial(\bH^n)$.

\begin{dfn}\label{gfd} \rm
\begin{enumerate}
\item The convex core $C(\G)$ of $\G$ is the quotient by $\G$ of the smallest convex subset of $\bH^n$ containing all geodesics
connecting points in $\Lambda(\G)$.
\item $\G$ is called geometrically finite (resp. convex cocompact)
 if the unit neighborhood of the convex core of $\G$ has finite volume (resp. compact).
\end{enumerate}
\end{dfn}

A lattice is clearly a geometrically finite group and so is a discrete group admitting a finite sided convex fundamental
domain in $\bH^n$. An  
 important characterization of a geometrically finite group is given in terms of its limit set. For this, we need
to define:
a point $\xi\in \Lambda(\G)$ is called a parabolic limit point for $\G$ if $\xi$ is a unique fixed
point in $\partial{\bH^n}$ for an element of $\G$ and a radial limit point if the projection of
 a geodesic ray $\xi_t$ toward $\xi$
in $\G\ba \bH^n$ meets a compact subset  for an unbounded sequence of time $t$.

Now $\G$ is geometrically finite if and only if $\Lambda(\G)$ consists only of parabolic
and radial limit points \cite{Bow}. For $\G$ geometrically finite, its critical exponent
 $\delta$ is equal to the Hausdorff dimension
of $\Lambda(\G)$, and is $n-1$ only when $\G$ is a lattice in $G$ \cite{Su}.



\subsection{Conformal densities}
To define a conformal density, we first
recall the Busemann function $\beta_{\xi}(x,y)$ for $x,y\in \bH^n$ and $\xi\in \partial(\bH^n)$:
$$\beta_{\xi}(x,y)=\lim_{t\to \infty} d(x, \xi_t) -d(y, \xi_t)$$
where $\xi_t$ is a geodesic toward $\xi$.
Hence
 $\beta_{\xi}(x,y)$ measures a signed distance between horospheres based at $\xi$ passing through
$x$ and $y$ (a horosphere based at $\xi$ is a Euclidean sphere in $\bH^n$ tangent at $\xi$).
\begin{dfn} \rm A
$\G$-invariant conformal density of dimension $\delta_\mu>0$ is
a family $\{\mu_x: x\in \bH^n\}$ of finite positive measures on  $\partial(\bH^n)$ satisfying:
\begin{enumerate}
 \item For any $\gamma\in \G$, $\gamma_*\mu_x=\mu_{\gamma(x)}$;
\item $\frac{d\mu_x}{d\mu_y}(\xi)=e^{\delta_\mu \beta_{\xi}(y,x)}$ for all $x,y\in \bH^n$ and $\xi\in \partial(\bH^n)$.
\end{enumerate}
\end{dfn}

 It is easy to construct such a density of dimension $n-1$, as 
we simply need to set $m_x$ to be
 the $\op{Stab}_G(x)$-invariant probability measure on $\bH^n$. This is a unique up to scaling,
 and does not depend on $\G$. We call it the Lebesgue density.

How about in other dimensions?
A fundamental work of Patterson \cite{Pa}, generalized by Sullivan \cite{Su}, shows the following by an explicit construction:

\begin{thm}
 There exists a $\G$-invariant conformal density of dimension $\delta_\G=\delta$.
\end{thm}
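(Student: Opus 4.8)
The plan is to follow the Patterson--Sullivan construction directly, building the density as a weak-$*$ limit of normalized averages over the orbit. First I would fix a basepoint $o\in\bH^n$ and consider, for $s>\delta$, the family of finite measures on $\overline{\bH^n}$ defined by
\[
\mu_{o,s}=\frac{1}{\sum_{\g\in\G}e^{-s\,d(o,\g o)}}\sum_{\g\in\G}e^{-s\,d(o,\g o)}\,\mathrm{Dirac}_{\g o},
\]
so that each $\mu_{o,s}$ is a Borel probability measure on the compact space $\overline{\bH^n}$. More generally, for each $x\in\bH^n$ one sets $\mu_{x,s}$ by replacing $d(o,\g o)$ in the exponent by $d(x,\g o)$ but keeping the same normalizing constant (the Poincar\'e series based at $o$); the elementary inequality $|d(x,\g o)-d(o,\g o)|\le d(x,o)$ shows each $\mu_{x,s}$ has total mass bounded above and below independently of $s$ on any fixed compact range of $s$.

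Next I would take a sequence $s_k\downarrow\delta$ and, using weak-$*$ compactness of bounded measures on the compact metrizable space $\overline{\bH^n}$, pass to a subsequence along which $\mu_{o,s_k}$ converges weak-$*$ to some finite measure $\mu_o$, and simultaneously (by a diagonal argument over a countable dense set of basepoints, or by noting the Radon--Nikodym relation forces convergence at every $x$ once it holds at one) obtain limits $\mu_x$ for all $x\in\bH^n$. There are two cases. If the Poincar\'e series diverges at $s=\delta$, then any accumulation point is supported on the limit set $\Lambda(\G)\subset\partial(\bH^n)$, because the contribution of the finitely many orbit points inside any fixed ball is killed by the diverging normalization. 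If the series converges at $s=\delta$, one first replaces $e^{-s\,d(o,\g o)}$ by $e^{-s\,d(o,\g o)}\,h(d(o,\g o))$ for a suitable slowly growing function $h$ (Patterson's trick) to force divergence at $\delta$ without changing the critical exponent, and then runs the same argument; the slowly-varying property of $h$ ensures the conformality relation is unaffected in the limit.

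The conformality relation (2) is the heart of the matter: for the pre-limit measures one computes $\frac{d\mu_{x,s}}{d\mu_{y,s}}(\g o)=e^{-s(d(x,\g o)-d(y,\g o))}$, and as $\g o\to\xi\in\partial(\bH^n)$ one has $d(x,\g o)-d(y,\g o)\to\beta_\xi(x,y)$ uniformly on compacta of the boundary, so in the weak-$*$ limit $\frac{d\mu_x}{d\mu_y}(\xi)=e^{-\delta\,\beta_\xi(x,y)}=e^{\delta\,\beta_\xi(y,x)}$, which is exactly (2) with $\delta_\mu=\delta$. The $\G$-equivariance (1) follows from the change of variables $\g'\mapsto\g\g'$ in the defining sum, which permutes the orbit and shifts the exponent by $d(x,\g\g'o)=d(\g\inv x,\g'o)$, yielding $\g_*\mu_x=\mu_{\g x}$ in the limit. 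Finiteness and positivity of each $\mu_x$ come from the uniform two-sided mass bounds noted above, together with the fact that the limit is not the zero measure (its total mass at the basepoint is $1$, or at least bounded below, by construction). The main obstacle is the convergence-type case: one must verify that Patterson's modifying function $h$ can be chosen to simultaneously (i) make the modified series diverge at $\delta$, (ii) leave the abscissa of convergence equal to $\delta$, and (iii) be sufficiently slowly varying that $\lim_{t\to\infty} h(t+c)/h(t)=1$ for every fixed $c$, so that neither the conformality nor the equivariance computation is disturbed in the limit; this is a real but standard analytic lemma, and once it is in hand the rest of the argument is routine weak-$*$ compactness and continuity of the Busemann cocycle.
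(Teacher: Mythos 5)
Your proposal is the standard Patterson--Sullivan construction, which is exactly the argument the paper sketches (normalized orbital measures $\nu_{x,s}$, weak-$*$ limit as $s\to\delta^+$, with Patterson's modifying function $h$ in the convergence-type case), and your verifications of conformality, equivariance, and positivity are correct. No substantive differences from the paper's approach.
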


Assuming that  $\G$ is of divergence type,
i.e., its Poincare series diverges at $s=\delta$, Patterson's construction can be summarized as follows:
 Fixing  $o\in \bH^n$, for each $x\in \bH^n$, consider the finite measure on $\overline{\bH^n}$
given by
$$\nu_{x,s}=\frac{1}{\sum_{\gamma\in \G} e^{-sd(o, \gamma (o))}} \sum_{\gamma\in \G}  e^{-sd(x, \gamma (o))}\delta_{\gamma(o)} .$$ 
Then $\nu_x$ is the (unique) weak-limit of $\nu_{x,s}$ as $s\to \delta^+$, and
$\{\nu_x:x \in \bH^n\}$ is the desired density of dimension $\delta$.

\medskip

In the following, we fix a $\G$-invariant conformal density $\{\nu_x\}$
 of dimension $\delta$, and call it the Patterson-Sullivan density (or simply the PS density). 
It is known to be unique up to scaling, when $\G$ is of divergence type, e.g., geometrically finite groups.

Denoting by $\Delta$ the hyperbolic Laplacian on $\bH^n$,
the PS density is closely related to the bottom of the spectrum of $\Delta$ for its action on smooth
functions on $\G\ba \bH^n$.
If we set $\phi_0(x):=|\nu_x|$ for each $x\in \bH^n$,
then $$\phi_0(x)=\int_{\xi\in \Lambda(\G)} d\nu_x(\xi)=\int_{\xi\in \Lambda(\G)} \frac{d\nu_x}{d\nu_o}(\xi) \; d\nu_o(\xi)
=\int_{\xi\in \Lambda(\G)} e^{\delta \beta_{\xi}(o,x)} d\nu_o(\xi) .$$
Since $\gamma_*\nu_x=\nu_{\gamma(x)}$,
we note that $\phi_0(\gamma(x))=\phi_0(x)$, i.e., $\phi_0$ is a function on $\G\ba \bH^n$, which is
{\it positive} everywhere!
Furthermore we have
\begin{enumerate}
\item $\Delta(\phi_0)=\delta (n-1-\delta)\phi_0$;
 \item if $\G$ is geometrically finite, then $\phi_0\in L^2(\G\ba \bH^n)$ if and only if $\delta >(n-1)/2$.
\end{enumerate}

\subsection{Measures on $\G\ba G$ associated to a pair
of conformal densities}
For $v\in \T^1(\bH^n)$, we denote by $v^{+}$ and $v^-$ the forward and backward endpoints of the geodesic determined by $v$.
 Fixing $o\in \bH^n$,
the map $v\mapsto (v^+, v^-, s=\beta_{v^-}(o, v))$
 gives a homeomorphism between $\T^1(\bH^n)$ and $(\partial(\bH^n)\times \partial(\bH^n) -\text{diagonal}) \times \br$.
Therefore we may use the coordinates $(v^+, v^-, s=\beta_{v^-}(o, v))$ of $v$
in order to define measures on $\T^1(\bH^n)$.

\medskip
Let $\{\mu_x\}$ and $\{\mu_x'\}$ be $\G$-invariant conformal densities on
$\partial{(\bH^n)}$ of dimensions $\delta_\mu$ and $\delta_{\mu'}$ respectively.
 After Roblin~\cite{Ro}, we define a measure $\tilde m^{\mu,\mu'}$ on
 $\T^1(\bH^n)$ associated to $\{\mu_x\}$ and $\{\mu_x'\}$ by
\begin{equation} \label{eq:m-mumu}
d \tilde m^{\mu,\mu'}(v) =
e^{\delta_\mu \beta_{v^+}(o, v)}\; e^{\delta_{\mu'} \beta_{v^-}(o,v) }\;d\mu_o(v^+) d
\mu'_o(v^-) ds.
\end{equation}

It follows from the $\G$-invariant conformal properties of $\{\mu_x\}$ and $\{\mu_x'\}$ that
 the definition of $\tilde m^{\mu,\mu'}$ is independent of the choice of $o\in \bH^n$ and that
$\tilde m^{\mu,\mu'}$ is left $\G$-invariant. Hence
it induces a Radon measure $m^{\mu,\mu'}$ on the quotient space $\T^1(\G\ba \bH^n)=\G\ba \T^1(\bH^n)$.

We will lift the measure $m^{\mu,\mu'}$ to $\G\ba G$. This lift depends on the choice of
subgroups $K$, $M$ and $A=\{a_t\}$ of $G$. Here 
 $K$ is a maximal compact subgroup of $G$ and $M$ is
the stabilizer of a vector $X_0\in \T^1(\bH^n)$ based at $o\in \bH^n$ with $K=G_o$. 
Via the isometric action of $G$, 
we may identify the quotient spaces $G/K$ and $G/M$ with  $\bH^n$ and $\T^1(\bH^n)$ respectively.
Let $A=\{a_t\}$ be the one parameter subgroup
of diagonalizable elements of $G$ such that
the right multiplication by $a_t$ on $G/M$ corresponds to the geodesic flow on $\T^1(\bH^n)$ for time $t$.

By abuse of notation, we use the same notation
$m^{\mu,\mu'}$ for the $M$-invariant extension of $m^{\mu,\mu'}$ on $\G\ba G/M=\G\ba \T^1(\bH^n)$ to $\G\ba G$, that is,
for $\Psi\in C_c(\G\ba G)$,
$$m^{\mu,\mu'}(\Psi)=\int_{x\in \G\ba G/M} \Psi^M(x) dm^{\mu,\mu'}(x)$$
where $\Psi^M(x)=\int_M \Psi(xm) dm $ for the probability $M$-invariant measure $dm$ on $M$.

The measures $m^{\mu,\mu'}$ on $\G\ba G$ where $\mu$ and $\mu'$ are 
 the PS-density $\{\nu_x\}$ or the Lebesgue density $\{m_x\}$
are of special importance.
We name them as follows:
\begin{itemize}
 \item{Bowen-Margulis-Sullivan measure:} $m^{\BMS}:=m^{\nu,\nu}$;
\item{Burger-Roblin measure:} $m^{\BR}:=m^{m, \nu}$;
\item{Burger-Roblin $*$-measure:} $m^{\BR}_*:=m^{\nu,m}$;
\item{Haar measure:} $m^{\Haar}:=m^{m,m}$.
\end{itemize}

For brevity, we refer to these as BMS, BR, BR$_*$, Haar measures respectively.
As the naming indicates, $m^{\Haar}$ turns out to be a $G$-invariant measure.
For $g\in G$, we use the notation $g^{\pm}$ for $(gM)^{\pm}$
with $gM$ considered as a vector in $\T^1(\bH^n)$.
It is clear from the definition that the supports of BMS, BR, BR$_*$ measures 
are respectively given by $\{g\in \G\ba G: g^{\pm}\in \Lambda(\G)\}$,
$\{g\in \G\ba G: g^{-}\in \Lambda(\G)\}$, and
$\{g\in \G\ba G: g^{+}\in \Lambda(\G)\}$. In particular, the support of BMS measure is contained in the convex core
of $\Gamma$. Sullivan showed that $|m^{\BMS}|<\infty$ if $\G$ is geometrically finite.

The BMS, BR and BR$_*$ measures are respectively invariant under $A$, $N^+$ and $N^-$
where $N^{+}$ (resp $N^-$) denotes the expanding (resp. contracting) horospherical subgroup of $G$ for $a_t$:
$$N^{\pm}=\{g\in G: a_tga_{-t}\to e\text{ as $t\to {\pm \infty}$}\}.$$

The finiteness of $m^{\BMS}$ turns out to be a critical condition for the ergodic theory on $\G\ba G$.

\begin{thm}\label{im} Suppose that $|m^{\BMS}|<\infty$ and that $\G$ is Zariski dense.
 
\begin{enumerate}
 \item $m^{\BMS}$ is $A$-mixing:
for any $\Psi_1, \Psi_2\in L^2(\G\ba G)$,
$$\lim_{t\to\infty}\int_{\G\ba G} \Psi_1(ga_t)\Psi_2(g)\; dm^{\BMS}(g)=\frac{1}{|m^{\BMS}|} m^{\BMS}(\Psi_1)\cdot m^{\BMS}(\Psi_2) .$$
\item Any locally finite $N^+$-ergodic invariant measure on $\Gamma\ba G$
is either supported on a closed $N^+M_0$-orbit where $M_0$ is an abelian closed subgroup of $M$
or $m^{\BR}$.

\item $m^{\BR}$ is a finite measure if and only if $\G$ is a lattice in $G$ 
\end{enumerate}
\end{thm}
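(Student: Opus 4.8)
I would treat the three parts of Theorem~\ref{im} separately: parts (1) and (2) I would derive from existing work rather than reprove, while part (3)---the finiteness criterion for $m^{\BR}$---admits a short self-contained argument that I sketch in full. For (1), the $A$-mixing of $m^{\BMS}$ on $\G\ba G$ reduces to two inputs: mixing of the geodesic flow on $(\T^1(\G\ba\bH^n),m^{\BMS})=(\G\ba G/M,m^{\BMS})$, and equidistribution of the $M$-holonomy along $m^{\BMS}$-generic geodesics. The first is Babillot's theorem: when $|m^{\BMS}|<\infty$ the geodesic flow is mixing as soon as the length spectrum is non-arithmetic, and Zariski density of $\G$ (in fact non-elementarity) provides the non-arithmeticity. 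The second---the passage from the unit tangent bundle to the frame bundle $\G\ba G$---holds because Zariski density forces the closed subgroup of $M$ generated by holonomies of BMS-generic closed geodesics to be all of $M$ (the frame-flow mixing argument of Flaminio--Spatzier and Winter), and it can also be routed through Roblin's equidistribution \cite{Ro}. For (2), I would invoke Roblin's measure classification in \cite{Ro}, with the $M_0$-refinement due to Winter: using the mixing of (1) together with the expansion of $N^+$-orbits under $a_t$, one shows that an $N^+$-ergodic $N^+$-invariant Radon measure on $\G\ba G$ either concentrates on a single closed $N^+M_0$-orbit or has full support on $\{g:g^-\in\Lambda(\G)\}$, in which case uniqueness of the PS density forces it to equal $m^{\BR}$. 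Both statements are the substance of the cited papers, so in a survey the right move is to quote them.

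For (3) I can be concrete. The ``if'' direction is immediate from the fact recalled in Section~2 that for a lattice every $m^{\mu,\mu'}$ coincides with the finite $G$-invariant measure $m^{\Haar}$. For ``only if'', assume $m^{\BR}$ is a finite measure; it is automatically nonzero because $\G$ is non-elementary. The key observation is that the geodesic flow scales each $m^{\mu,\mu'}$: a direct substitution in \eqref{eq:m-mumu}---right translation by $a_t$ fixes the endpoints $v^\pm$, translates the coordinate $s$ (so the factor $ds$ is preserved), and shifts the Busemann cocycles in the two weights---gives $(R_{a_t})_*\,m^{\mu,\mu'}=e^{(\delta_\mu-\delta_{\mu'})t}\,m^{\mu,\mu'}$ (for $\mu=\mu'$ this is the $A$-invariance of $m^{\BMS}$ and $m^{\Haar}$ already noted), and in particular $(R_{a_t})_*\,m^{\BR}=e^{(n-1-\delta)t}\,m^{\BR}$. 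Since $a_t$ acts on $\G\ba G$ as a homeomorphism, pushing a finite measure forward preserves its total mass, so $|m^{\BR}|=e^{(n-1-\delta)t}|m^{\BR}|$ for all $t\in\R$; as $0<|m^{\BR}|<\infty$ this forces $\delta=n-1$. It remains to upgrade $\delta=n-1$ to ``$\G$ is a lattice''. Because $|m^{\BMS}|<\infty$ and $m^{\BMS}$ is $a_t$-invariant, the geodesic flow is conservative by Poincar\'e recurrence, hence $\G$ is of divergence type by the Hopf--Tsuji--Sullivan dichotomy (see \cite{Ro}); therefore the conformal density of dimension $\delta=n-1$ is unique up to scaling, and since the Lebesgue density $\{m_x\}$ is such a density we get $\nu=c\,m$ for some $c>0$. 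Consequently $m^{\BR}=m^{m,\nu}=c\,m^{m,m}=c\,m^{\Haar}$, so $|m^{\Haar}|<\infty$, i.e. $\G\ba G$ carries a finite $G$-invariant measure, i.e. $\G$ is a lattice.

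The only delicate point in (3) is the scaling identity: one has to verify that right translation by $a_t$ multiplies $m^{\BR}$ by a genuine constant rather than by a $t$-dependent density, and this is exactly the one-line computation above, made transparent by the fact that the coordinates $(v^+,v^-,s)$ linearize the $A$-action. (An alternative to the second step: the pushforward of $m^{\BR}$ under $\G\ba G\to\G\ba\bH^n$ is proportional to $\phi_0\,d\vol$---integrating a fiber $\T^1_x$ against the visual measure turns the $\nu$-weight into $|\nu_x|=\phi_0(x)$---so $|m^{\BR}|<\infty$ is equivalent to $\phi_0\in L^1(\G\ba\bH^n)$; but converting that into ``$\G$ is a lattice'' is less direct than going through divergence type.) For parts (1) and (2) the genuine obstacle is simply that their honest proofs constitute the main content of \cite{Ro} and of the frame-flow literature.
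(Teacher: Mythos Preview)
Your treatment of parts (1) and (2) matches the paper's: both are attributed to the cited works (Babillot, Flaminio--Spatzier, Winter for (1); Burger, Roblin, Winter for (2)) rather than reproved in place, which is exactly what the paper does.

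For part (3) you give a genuinely different argument from the one the paper invokes. The paper appeals to \cite{OS}, where the ``only if'' direction is obtained via Ratner's classification of \emph{finite} measures invariant under unipotent flows: since $m^{\BR}$ is $N^+$-invariant, if it were finite it would have to appear on Ratner's list of homogeneous measures, and one then checks it is none of them unless it is Haar. Your route is more elementary and self-contained: you exploit the quasi-invariance $(R_{a_t})_*m^{\BR}=e^{(\delta_{\mu'}-\delta_\mu)t}m^{\BR}$ coming directly from \eqref{eq:m-mumu} (the sign of the exponent depends on conventions, but the argument is unaffected), which forces $\delta=n-1$ once $0<|m^{\BR}|<\infty$; then finiteness of $m^{\BMS}$ gives divergence type via Hopf--Tsuji--Sullivan, uniqueness of the $(n-1)$-dimensional conformal density identifies $\nu$ with the Lebesgue density, and hence $m^{\BR}$ with a multiple of $m^{\Haar}$. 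This is correct and avoids Ratner entirely. What the Ratner approach buys is that it places the result inside the general structure theory of unipotent-invariant measures and generalizes more readily; what your approach buys is that it needs nothing beyond the defining formula for $m^{\mu,\mu'}$ and standard facts about the PS density already recalled in the section.
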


Claim (1) was first made in Flaminio and Spatzier \cite{FS}. However there is a small gap in their proof
which is now fixed by Winter \cite{Wi}. Winter also obtained Claim (1) in a general rank one symmetric space.
  For M-invariant functions,
this claim was earlier proved by Babillot \cite{Ba} and in this case the Zariski density
assumption is not needed.  Claim (2) was first proved by
Burger \cite{Bu} for a convex cocompact surface with critical exponent bigger than $1/2$.
In a general case, Winter obtained Claim (2) from Roblin's work \cite{Ro} and Claim (1).

Claim (3) is proved in \cite{OS}, using
Ratner's measure classification [1991] of finite measures invariant under unipotent flows. Namely, we show that m
BR is not one of those homogeneous measures
that her classification theorem lists for finite invariant measures.

In the spirit of Ratner's measure classification theorem \cite{Ra}, we pose the following question:
\begin{Q} Under the assumption of Theorem \ref{im}, let $U$ be a connected unipotent subgroup 
of $G$ or more generally a connected subgroup generated by unipotent one-parameter subgroups.
\begin{enumerate}
 \item 
 Classify all locally finite $U$-invariant ergodic measures in $\G\ba G$.
\item Describe the closures of $U$-orbits in $\G\ba G$.
\end{enumerate}
\end{Q}
The emphasis here is that we want to understand not only finite measures but all Radon measures. For $U$ horospherical group, one should be able to
answer these questions by the methods of Roblin \cite{Ro}.
In general, this seems to be a very challenging question. 
We mention a recent related result of \cite{MO}: if $\G$ is a convex cocompact subgroup of
$G$ and $U$ is a connected unipotent subgroup of $N^+$ of dimension $k$,
$m^{\BR}$ is $U$-ergodic if $\delta>(n-1)-k$. 
Precisely speaking, this is proved only for $n=3$ in \cite{MO}, but the methods of proof works
for a general $n\ge 3$ as well.

\section{Matrix coefficients for $L^2(\G\ba G)$}
Let $\G$ be a discrete, torsion-free, non-elementary subgroup of $G=\SO(n,1)^\circ$. 
The right translation action of $G$ on $L^2(\G\ba G, m^{\Haar})$ gives rise to a unitary representation,
as $m^{\Haar}$ is $G$-invariant.
For $\Psi_1, \Psi_2\in L^2(\G\ba G)$,
the matrix coefficient function 
is a smooth function on $G$ defined by $$g\mapsto \la g. \Psi_1, \Psi_2\ra:=\int_{\G\ba G} \Psi_1(xg) \overline{\Psi_2(x)} dm^{\Haar}(x) .$$

Understanding the asymptotic expansion of 
$\la a_t. \Psi_1, \Psi_2\ra $ (as $t\to \infty$) is a basic problem in harmonic analysis
 as well as a main tool in our approach to the counting problem.

The quality of the error term in this type of the asymptotic expansion usually depends on Sobolev norms of $\Psi_i$'s.
For $\Psi\in C^\infty(\G\ba G)$ and $d\in \N$, 
the $d$-th Sobolev norm of $\Psi$ is given by $\mathcal S_{ d}(\Psi)=\sum \| X (\Psi)\|_{2}$
where the sum is taken over all monomials $X$ in some fixed basis of the Lie algebra of $G$
of order at most $d$
and $\|X(\Psi)\|_2$ denotes the $L^2$-norm of $\Psi$.

\medskip
In order to describe our results as well as a conjecture, we begin by describing the unitary dual of $G$, i.e.,
the set of equivalence classes of all irreducible unitary representations of $G$.

\medskip 
\noindent{\bf  The unitary dual $\hat G$.} 
Let $K$ be a maximal compact subgroup of $G$
and fix a Cartan decomposition $G=KA^+K$.
We parametrize $A^+=\{a_t: t\ge 0\}$ so that $a_t$ corresponds to
the geodesic flow on $\T^1(\bH^n)=G/M$ where $M:=C_K(A^+)$ is the centralizer of $A^+$ in $K$.

A representation $\pi\in \hat G$ is said to be {\it tempered} if for any $K$-finite vectors $v_1, v_2$
of $\pi$, the matrix coefficient function $g\mapsto \la \pi(g)v_1, v_2\ra$ belongs to $L^{2+\e}(G)$ for any $\e>0$.
We write $\hat G=\hat G_{\op{temp}}\cup \hat G_{\text{non-temp}}$ as the disjoint union of
tempered representations and non-tempered representations.

The work of Hirai \cite{Hi} on the classification of $\hat G$ implies that non-tempered part
of the unitary dual $\hat G$ consists of the trivial representation, and complementary series representations $\mathcal U(\upsilon, s-n+1)$
parameterized by $\upsilon\in \hat M$ and $s\in I_\upsilon$, 
where $\hat M$ is the unitary dual of $M$ and $I_\upsilon$ is an interval contained in $((n-1)/2, n-1)$, depending on $\upsilon$
(see also \cite[Prop. 49, 50]{KS}).
Moreover $\mathcal U(\upsilon, s-n+1)$ is spherical if and only if $\upsilon$ is the trivial representation $1$ of $M$.
By choosing a  Casimir operator 
$\mathcal C$ of the Lie algebra of $G$ normalized so that it acts
 on $C^\infty(G)^K=C^\infty(\bH^n)$ by the negative Laplacian,
the normalization is made so that
 $\mathcal C$ acts on $\mathcal U(1, s-n+1)^\infty$ by the scalar $s(s-n+1)$.


\subsection{Lattice case: $|m^{\Haar}|<\infty$}
Set $$L_0^2(\G\ba G):=\{\Psi\in L^2(\G\ba G): \int \Psi \; dm^{\Haar}=0\}.$$
When $\G$ is a lattice, we have $L^2(\G\ba G)=\c \oplus L_0^2(\G\ba G)$.
 It is well-known that there exists $(n-1)/2<s_0<(n-1)$ such that
 $L_0^2(\G\ba G)$ does not contain
any complementary series representation of parameter $s\ge s_0$ \cite{BG}.
This implies the following (cf. \cite[Prop 5.3]{KO}):

\begin{thm} Suppose $\G$ is a lattice in $G$, that is,
$|m^{\Haar}|<\infty$. There exists $\ell \in \N$ such that for any $\Psi_1, \Psi_2 \in  L^2(\G\ba G) \cap
C(\G\ba G)^\infty$, we have,
as $t\to \infty$,
$$\la a_t. \Psi_1, \Psi_2\ra  =\frac{1}{|m^{\Haar}|} m^{\Haar}(\Psi_1) m^{\Haar}(\Psi_2) +
 O(\mathcal S_\ell(\Psi_1) \mathcal S_\ell(\Psi_2)
e^{(s_0-n+1)t})$$
where $m^{\Haar}(\Psi_i)=\int_{\G\ba G} \Psi_i(x) dm^{\Haar}(x)$ for $i=1,2$.
\end{thm}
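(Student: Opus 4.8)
The plan is to split off the constant function and then to read the decay rate of matrix coefficients on $L^2_0(\G\ba G)$ off the constraint on the automorphic dual supplied by \cite{BG}, following the strategy of \cite[Prop.~5.3]{KO}. Since $|m^{\Haar}|<\infty$ we have the $G$-invariant orthogonal splitting $L^2(\G\ba G)=\c\cdot 1\oplus L^2_0(\G\ba G)$; writing $\Psi_i=c_i+\Psi_i^0$ with $c_i=m^{\Haar}(\Psi_i)/|m^{\Haar}|$ and $\Psi_i^0\in L^2_0(\G\ba G)\cap C^\infty$, and expanding the pairing using $a_t\,1=1$ together with $1\perp L^2_0(\G\ba G)$, the two cross terms vanish and the constant–constant term equals $\tfrac{1}{|m^{\Haar}|}m^{\Haar}(\Psi_1)m^{\Haar}(\Psi_2)$, which is $t$-independent and is exactly the asserted main term. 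Since $\mathcal S_\ell(\Psi_i^0)\ll\mathcal S_\ell(\Psi_i)$ (passing to $\Psi_i^0$ changes only the zeroth order term, controlled by Cauchy--Schwarz), it remains to prove
\[
\bigl|\la a_t.\Psi_1^0,\Psi_2^0\ra\bigr|\ \le\ C\,\mathcal S_\ell(\Psi_1)\,\mathcal S_\ell(\Psi_2)\,e^{(s_0-n+1)t}\qquad(t\ge 0)
\]
for some $\ell\in\N$ and $C$ depending only on $G$ and $s_0$.

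For this I would isolate a representation-theoretic statement and apply it to $\H=L^2_0(\G\ba G)$: if $(\pi,\H)$ is a unitary representation of $G$ with no nonzero $G$-fixed vector and containing no complementary series $\mathcal U(\upsilon,s-n+1)$ of parameter $s\ge s_0$, then $|\la\pi(a_t)v_1,v_2\ra|\le C\,\mathcal S_\ell(v_1)\,\mathcal S_\ell(v_2)\,e^{(s_0-n+1)t}$ for all smooth vectors $v_i$. Both hypotheses hold here: a $G$-invariant vector in $L^2(\G\ba G)$ is the constant function, hence absent from $L^2_0$, and the complementary-series constraint is exactly the content of \cite{BG}. To prove the lemma, write $\H$ as a direct integral of irreducibles; it then suffices to bound $|\la\pi(a_t)v_1,v_2\ra|$ uniformly over the constituents, which by Hirai's description of $\hat G$ recalled above are either tempered or complementary series $\mathcal U(\upsilon,s-n+1)$ with $(n-1)/2<s<s_0$. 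For $K$-finite vectors $w_1,w_2$ this is classical (Harish-Chandra's theory of leading exponents, or explicit computations): in the tempered case one has $|\la\pi(a_t)w_1,w_2\ra|\ll(\dim\la Kw_1\ra\,\dim\la Kw_2\ra)^{1/2}\,\Xi(a_t)\,\|w_1\|\,\|w_2\|$ with $\Xi(a_t)\ll(1+t)e^{-(n-1)t/2}$, while in the complementary-series case the asymptotics of the associated Jacobi/hypergeometric matrix coefficients give the sharp rate $e^{(s-n+1)t}$, again up to a factor polynomial in $t$ and in the $K$-type dimensions, uniformly for $s$ in compacta of $((n-1)/2,n-1)$. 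Since $s_0>(n-1)/2$ one has $(1+t)e^{-(n-1)t/2}\le C e^{(s_0-n+1)t}$ for $t\ge 0$, and $s<s_0$ gives $e^{(s-n+1)t}\le e^{(s_0-n+1)t}$, so every constituent obeys the target rate. One then passes from $K$-finite to general smooth vectors by expanding $v_i=\sum_{\s\in\hat K}v_{i,\s}$ into $K$-isotypic components and summing the pairwise bounds: because $\dim\s$ and the number of $\s$ with bounded $K$-Casimir eigenvalue grow only polynomially, a sufficiently high power of the Casimir dominates the isotypic tails, so for $\ell$ large (independently of $\pi$) the double sum converges and is $\le C\,\mathcal S_\ell(v_1)\,\mathcal S_\ell(v_2)$.

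The main obstacle is the \emph{uniformity} in these last two steps: one must dominate the matrix coefficients of the entire family of complementary series — as the parameter $s$ ranges up to $s_0$ and the $M$-type $\upsilon$ varies — by a single constant times $e^{(s_0-n+1)t}\,\mathcal S_\ell$, and at the same time control the convergence of the $K$-isotypic double sum with $\ell$ not depending on the representation; this is where one needs Harish-Chandra's uniform estimates for $\Xi$ and for leading exponents of admissible representations (or explicit hypergeometric bounds for $\SO(n,1)$), together with careful bookkeeping of Sobolev norms through the direct-integral decomposition so that $\mathcal S_\ell$ on $\G\ba G$ dominates the fibrewise norms. Granting this lemma, combining it with the constant-term computation of the first paragraph yields the theorem, with $\ell$ and the implied constant depending only on $G$ and on the spectral gap $s_0$ of $\G$.
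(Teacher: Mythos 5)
Your proposal is correct and follows essentially the same route as the paper, which itself only sketches the argument: split off the constants, invoke the spectral gap of \cite{BG} for $L^2_0(\G\ba G)$, and then deduce the uniform exponential decay of matrix coefficients of smooth vectors via the classification of $\hat G$ and Harish-Chandra-type estimates, exactly as in \cite[Prop.~5.3]{KO} which the paper cites for this step. The uniformity issues you flag (over the complementary-series parameters and in the $K$-isotypic summation) are precisely what that reference handles, so there is no gap in substance.
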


We note that the constant function $1/\sqrt{|m^{\Haar}|}$ is a unit vector
in $L^2(\G\ba G)$ and
the main term $\frac{1}{|m^{\Haar}|} m^{\Haar}(\Psi_1) m^{\Haar}(\Psi_2)$
is simply the product of the projections of $\Psi_1$ and $\Psi_2$
to the minimal subrepresentation space, which is $\c$, of $L^2(\G\ba G)$.

\subsection{Discrete groups with $|m^{\BMS}|<\infty$}
When $\G$ is not a lattice, we have $L^2(\G\ba G)=L^2_0(\G\ba G)$.
By the well-known decay  of the matrix coefficients
of unitary representations with no $G$-invariant vectors due to Howe and Moore \cite{HM},
 we have,
for any $\Psi_1, \Psi_2\in L^2(\G\ba G)$,
$$\lim_{t\to\infty} \la a_t. \Psi_1, \Psi_2\ra = 0.$$ 
However we have a much more precise description on the decay of $\la a_t. \Psi_1, \Psi_2\ra$ due to Roblin:

\begin{thm}\cite{Ro}\label{lo} Let $\G$ be Zariski dense with $|m^{\BMS}|<\infty$.
For any $\Psi_1,\Psi_2\in C_c(\G\ba G)$,
$$\lim_{t\to \infty}
e^{(n-1 -\delta)t} \la a_t. \Psi_1, \Psi_2\ra = \frac{m^{\BR}(\Psi_1)\cdot m^{\BR}_*(\Psi_2)}{|m^{\BMS}|} .$$
\end{thm}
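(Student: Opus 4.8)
\medskip

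\noindent\emph{The plan.}
I would deduce the asymptotic from the $A$-mixing of $m^{\BMS}$ in Theorem~\ref{im}(1) by a thickening argument transferring the correlation from $m^{\BMS}$ to $m^{\Haar}$, in the spirit of \cite{EM} but adapted to the thin setting. It is convenient to work in the Hopf parametrization $\Th\simeq(\partial(\bH^n)\times\partial(\bH^n)-\text{diagonal})\times\br$, in which $a_t$ acts by a shift of the $\br$-coordinate and, by \eqref{eq:m-mumu}, each of $m^{\Haar},\,m^{\BMS},\,m^{\BR},\,m^{\BR}_*$ is one of the four (conformally twisted) products formed from the Lebesgue density $\{m_x\}$ and the Patterson--Sullivan density $\{\nu_x\}$ placed in the two boundary factors. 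Both the exponent $n-1-\delta$ and the measures $m^{\BR},\,m^{\BR}_*$ should come out of the transfer: exchanging $\{m_x\}$ for $\{\nu_x\}$ along one strong-horospherical direction costs $\asymp e^{(n-1-\delta)t}$ per unit of flow (morally, the Lebesgue mass of an $e^{-t}$-shadow is $\asymp e^{-(n-1)t}$ while its $\nu_o$-mass is of order $e^{-\delta t}$), and after it $\Psi_1$ is left paired against the $N^+$-invariant density $m^{\BR}$ and $\Psi_2$ against the $N^-$-invariant density $m^{\BR}_*$. (For $\delta>(n-1)/2$ one could instead argue through the unitary dual --- the slowest-decaying subrepresentation of $L^2(\G\ba G)$ is then the complementary series $\mathcal U(1,\delta-n+1)$, whose $K$-finite matrix coefficients decay exactly like $e^{(\delta-n+1)t}$ --- but pinning the leading coefficient to $|m^{\BMS}|^{-1}m^{\BR}(\Psi_1)m^{\BR}_*(\Psi_2)$ still needs the ergodic input above, so I would run the mixing argument in all cases.)

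\medskip

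\noindent\emph{The steps.}
\emph{(i)}~By a partition of unity, bilinearity and uniform continuity, reduce to $\Psi_1,\Psi_2$ supported on arbitrarily small flow boxes $g_i\,O^-O^0O^+$ (with $O^\pm\subset N^\pm$, $O^0\subset AM$ neighbourhoods of $e$) on which $\Psi_i$ oscillates by less than a prescribed $\eta>0$.
\emph{(ii)}~Thicken: replace $\Psi_1$ by its Lebesgue average $\Psi_1^{(\e)}$ over an $\e$-ball in $N^-$, and $\Psi_2$ by its Lebesgue average $\Psi_2^{(\e)}$ over an $\e$-ball in $N^+$; then $\Psi_i^{(\e)}\to\Psi_i$ uniformly, so $\la a_t.\Psi_1,\Psi_2\ra=\la a_t.\Psi_1^{(\e)},\Psi_2^{(\e)}\ra+O(\e)$ uniformly in $t$.
\emph{(iii)}~Push $a_t$ through the product coordinates --- which expands one of the horospherical factors $N^\pm\simeq\br^{\,n-1}$ by a Jacobian $e^{(n-1)t}$ --- and, using the defining Radon--Nikodym relations of $\{m_x\},\{\nu_x\}$ together with the local product structures of $m^{\Haar}$ and $m^{\BMS}$ on flow boxes, rewrite the leafwise Lebesgue integrals in terms of the $\muPS$-conditionals of $m^{\BMS}$ on the strong stable and unstable leaves (which scale by $e^{\delta t}$ under the flow); the residual factor is exactly $e^{(n-1-\delta)t}$, absorbed into the normalization, so that for each fixed $\e$ one is left, as $t\to\infty$, with a correlation of the thickened functions taken against $m^{\BMS}$.
\emph{(iv)}~Apply Theorem~\ref{im}(1) to this correlation and then let $\e\to0$; reassembling the $\e$-pieces via \eqref{eq:m-mumu} identifies the two factors as $m^{\BR}(\Psi_1)$ and $m^{\BR}_*(\Psi_2)$, which yields the claim.

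\medskip

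\noindent\emph{The main difficulty.}
Step~\emph{(iii)} is the crux. Because $\{m_x\}$ and $\{\nu_x\}$ are mutually singular when $\G$ is thin, the passage from the leafwise Lebesgue integrals carried by $m^{\Haar}$ to the $\muPS$-conditionals carried by $m^{\BMS}$ is not a bounded Radon--Nikodym comparison but a genuinely dynamical estimate: it requires the local product structure of $m^{\BMS}$ on flow boxes together with uniform control, as $\e\to0$, of the $\e$-thickened horospherical pieces --- which, once translated by $a_t$, have size $\asymp\e\,e^{\pm t}$ along the two strong directions --- relative to those $\muPS$-conditionals. For a general $\G$ with $|m^{\BMS}|<\infty$ this is Roblin's argument; under geometric finiteness one additionally has Sullivan's shadow estimate $\nu_x(B(\xi,r))\asymp r^{\delta}$ (up to corrections at parabolic points), which makes the comparison quantitative and is what is exploited in \cite{MO2} for the effective refinement. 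Finally, the assertion concerns non-$M$-invariant functions on $\G\ba G$, so the mixing invoked must be the one on $\G\ba G$ and not merely on $\G\ba\Th$ --- whence the Zariski-density hypothesis, via Winter's fix \cite{Wi} of Flaminio--Spatzier --- and $|m^{\BMS}|<\infty$ is precisely what guarantees that mixing.
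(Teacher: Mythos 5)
Your outline follows exactly the route the paper indicates for this theorem: Roblin's deduction of the matrix-coefficient asymptotic from the mixing of $m^{\BMS}$ via thickening along the horospherical directions and the comparison of Lebesgue with Patterson--Sullivan leafwise measures (the source of the factor $e^{(n-1-\delta)t}$ and of the limits $m^{\BR}$, $m^{\BR}_*$), upgraded from $M$-invariant functions to all of $C_c(\G\ba G)$ by the $A$-mixing of Theorem~\ref{im}(1), which is where the Zariski density hypothesis enters. The paper offers no further detail beyond this citation, and your identification of step~(iii) --- the genuinely dynamical, non-Radon--Nikodym comparison of the two singular families of conditionals --- as the crux is accurate.
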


Roblin proved this theorem for $M$-invariant functions using the mixing of the geodesic flow
 due to Babillot \cite{Ba}. His proof extends without difficulty to general functions,
 based on the $A$-mixing in $\G\ba G$ stated as in Theorem \ref{im}.

\subsection{Geometrically finite groups with $\delta>(n-1)/2$}
In this subsection, we assume that $\G$ is  a geometrically finite,
Zariski dense, discrete subgroup of $G$ with $\delta>(n-1)/2$.
Under this assumption, the works of Lax and Phillips \cite{LP} and Sullivan \cite{Sullivan1984} together imply that
there exist only finitely many $(n-1)\ge s_0>s_1\ge \cdots\ge s_\ell >(n-1)/2$ such that the spherical complementary series representation $\mathcal U(1, s-n+1)$
occurs as a subrepresentation of $L^2(\G\ba G)$ and $s_0=\delta$.
In particular, there is no spherical complementary
representation $\mathcal U(1, s-n+1)$ contained in $L^2(\G\ba G)$
for $\delta<s<s_1$; hence we have a {\it spherical} spectral gap for $L^2(\G\ba G)$.

 Using the classification of $\hat G_{\op{non-temp}}$,
 we formulate the notion of a spectral gap.
Recall that a unitary representation $\pi$ is said to be weakly contained in a unitary representation
$\pi'$  if any diagonal matrix coefficients of $\pi$ can be approximated, uniformly on compact subsets, by
convex combinations of diagonal matrix coefficients of $\pi'$.


\begin{dfn}\label{intro_strong_gap}\label{sng}  \rm  We say that $L^2(\G\ba G)$ has a {\it strong spectral gap} if
   \begin{enumerate}
    \item $L^2(\G\ba G)$ does not contain any $\mathcal U(\upsilon,\delta-n+1)$ with $\upsilon\ne 1$;
\item there exist
$\tfrac{n-1}{2}<s_0(\G)<\delta$ such that  $L^2(\G\ba G)$ does not weakly contain any $\mathcal U(\upsilon,s-n+1)$ with $s\in (s_0(\G), \delta)$ and $\upsilon\in \hat M$. \end{enumerate}
\end{dfn}

For $\delta\le \tfrac{n-1}2$, the Laplacian spectrum of $L^2(\G\ba \bH^n)$ is continuous; this implies that
there is no spectral gap for $L^2(\G\ba G)$.

\begin{Con}[Spectral gap conjecture]\cite{MO2}\label{conj}
 If $\G$ is a geometrically finite and Zariski dense subgroup of $G$ with $\delta>\tfrac{n-1}2$,  $L^2(\G\ba G)$ has a strong spectral gap.
\end{Con}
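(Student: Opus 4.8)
The plan is to reduce Conjecture~\ref{conj} to a spectral statement about twisted (bundle) Laplacians on the geometrically finite manifold $\G\ba\Hn$, and then to attack that statement by combining Roblin's asymptotics (Theorem~\ref{lo}) with a Lax--Phillips theory for those operators and, at the critical parameter itself, with a positivity input of thermodynamic--formalism type.

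I would first decompose $L^2(\G\ba G)$ under the right $K$-action. On the trivial $K$-type one recovers $L^2(\G\ba\Hn)$ with the scalar Laplacian, where Lax--Phillips \cite{LP} and Sullivan \cite{Sullivan1984} supply everything: finitely many point eigenvalues in $(0,(n-1)^2/4)$, the bottom one $\delta(n-1-\delta)$ realized by the strictly positive ground state $\phi_0$, and hence a gap below $\delta$ for the spherical part. By Hirai's classification \cite{Hi}, $I_\upsilon\subset((n-1)/2,n-2]$ for every non-trivial $\upsilon$, so when $\delta>n-2$ --- in particular whenever $n\le 3$ --- there is no non-spherical complementary series of parameter $\ge\delta$ at all, and both conditions of Definition~\ref{sng} hold (this is the known case \cite{MO2}, using in addition the finiteness of the discrete twisted spectrum available in that range). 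The remaining content is the range $(n-1)/2<\delta\le n-2$, where it splits into (a) [condition (1) of Definition~\ref{sng}]: for the finitely many non-trivial $\upsilon$ with $\delta\in\overline{I_\upsilon}$, show that $\mathcal U(\upsilon,\delta-n+1)$ is not a subrepresentation of $L^2(\G\ba G)$; and (b) [toward condition (2)]: show that $L^2(\G\ba G)$ contains only finitely many complementary series of parameter exceeding a fixed $s_1\in((n-1)/2,\delta)$, so that some $s_0(\G)<\delta$ works. Both are naturally phrased via the homogeneous bundle $E_{\tau_\upsilon}=\G\ba(G\times_K V_{\tau_\upsilon})$ over $\G\ba\Hn$, where $\tau_\upsilon$ is the minimal $K$-type of $\mathcal U(\upsilon,\cdot)$, and the bundle Laplacian $\Delta_{\tau_\upsilon}$ coming from $-\mathcal C$: a copy of $\mathcal U(\upsilon,s-n+1)$ in $L^2(\G\ba G)$ is precisely a discrete $\Delta_{\tau_\upsilon}$-eigenvalue.

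For (b) I would develop the Lax--Phillips theory for $\Delta_{\tau_\upsilon}$, whose essential spectrum is governed by the funnel and cusp ends: its resolvent should admit a meromorphic continuation with finitely many poles in the relevant half-plane, by adapting the funnel/cusp parametrix constructions (Mazzeo--Melrose, Guillop\'e--Zworski) to the bundle-valued setting, which gives finiteness of the discrete spectrum and hence $s_0(\G)$. For (a) I would try to push Roblin's theorem: if $\Psi\in C_c(\G\ba G)$ lies in a non-trivial right-$M$-isotypic subspace then $\Psi^M=0$, so $m^{\BR}(\Psi)=m^{\BR}_*(\Psi)=0$ and Theorem~\ref{lo} gives $e^{(n-1-\delta)t}\langle a_t\Psi,\Psi\rangle\to0$. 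Decomposing $\langle a_t\Psi,\Psi\rangle$ spectrally, the complementary series of parameter $<\delta$ and the tempered part contribute $o(e^{(\delta-n+1)t})$ (using $\delta>(n-1)/2$, which makes even the $e^{-\rho t}$-decay of tempered coefficients lower order), and by the finiteness in (b) this tail is uniformly controlled; hence the coefficient of $e^{(\delta-n+1)t}$ is the \emph{finite} sum $\sum_{\pi:\,\mathrm{par}(\pi)=\delta}c_\pi(\Psi^\pi,\Psi^\pi)$ of leading Harish--Chandra coefficients. Each summand is $\ge0$, being a positive-semidefinite leading-term form, so Roblin forces $c_\pi(\Psi^\pi,\Psi^\pi)=0$ for $\pi=\mathcal U(\upsilon,\delta-n+1)$; one then wants to conclude $\Psi^\pi=0$, i.e. $\pi\not\hookrightarrow L^2(\G\ba G)$. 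What remains for this is to verify that $c_\pi$ is nondegenerate on the relevant isotypic subspace --- concretely, that the leading term of $\langle\pi(a_t)v_1,v_2\rangle$ is, up to a positive constant, $\langle v_1(\xi_0),v_2(\xi_0)\rangle_{V_\upsilon}$ for the $a_t$-fixed point $\xi_0\in G/P$, and that evaluation at $\xi_0$ is the nonzero $M$-map onto $\upsilon$ (or its Weyl conjugate).

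The hard part will be the analysis underlying (b) in the presence of cusps, together with the substitute for (a) needed if the nondegeneracy check above fails. A parabolic end carries a countable-alphabet, non-uniformly-hyperbolic symbolic dynamics, so one must pass to the induced (jump) transformation over cusp excursions --- as in the scalar work of Stratmann--Velani, Dal'bo--Peign\'e and Oh--Winter --- and check that the twist $\tau_\upsilon$ is compatible with that renormalization and preserves the meromorphic continuation and finiteness. A more robust route to (a) by itself, not passing through Roblin, would be a transfer-operator argument: the twisted Ruelle operator $\mathcal L_{s,\tau_\upsilon}$ acting on $V_{\tau_\upsilon}$-valued functions over a coding of $\Lambda(\G)$ by the holonomy of $E_{\tau_\upsilon}$ should have spectral radius strictly below $1$ at $s=\delta$, because a non-trivial twist admits no positive holonomy-invariant section --- a Perron--Frobenius-with-twist phenomenon for which the Zariski density of $\G$ (hence largeness of the holonomy along closed geodesics) is exactly the hypothesis needed, in the spirit of the non-conjugacy conditions of Parry--Pollicott and the essential-gap theorems of Naud and Bourgain--Dyatlov; one would then transfer ``$\mathrm{spr}(\mathcal L_{\delta,\tau_\upsilon})<1$'' into ``no $\Delta_{\tau_\upsilon}$-resonance at the critical value'' via the factorization of the twisted Selberg--Ruelle zeta function, known in the convex cocompact case. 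A Bochner--Weitzenb\"ock attempt --- writing $\Delta_{\tau_\upsilon}=\nabla^*\nabla+(\text{const})$ and dominating $\nabla^*\nabla$ by the scalar Laplacian via Kato's inequality --- will not close on its own, since for the relevant low $\tau_\upsilon$ on $\Hn$ the curvature constant has the wrong sign; and an appeal to expander-type uniform gaps (Salehi-Golsefidy--Varju \cite{SV}) is unavailable, as $\G$ need be neither arithmetic nor congruence. So the crux is genuinely new spectral input --- either geometric (twisted scattering/Patterson--Sullivan theory on a cusped infinite-volume manifold) or dynamical (a critical-parameter spectral gap for the twisted transfer operator) --- reconciled with the cusps.
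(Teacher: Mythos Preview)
The statement you are attempting to prove is explicitly a \emph{conjecture} in the paper; there is no proof to compare against. The only thing the paper establishes is the known range: when $n\le 3$ (with $\delta>(n-1)/2$), or $\delta>n-2$ for $n\ge 4$, the strong spectral gap holds simply because Hirai's classification \cite{Hi} shows there are no non-spherical complementary series $\mathcal U(\upsilon,s-n+1)$ with parameter $s>n-2$, so conditions (1)--(2) of Definition~\ref{sng} reduce to the spherical Lax--Phillips gap. You correctly identify and reproduce this observation.

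Beyond that range your proposal is, as you yourself acknowledge in the last paragraph, a research outline rather than a proof, and both of your key steps are genuinely open. For (b), a bundle-valued Lax--Phillips or Mazzeo--Melrose scattering theory on geometrically finite hyperbolic manifolds with cusps --- yielding finiteness of the discrete spectrum of the twisted Laplacian $\Delta_{\tau_\upsilon}$ above the essential threshold --- is not available in the literature in the generality you need; the parametrix constructions you cite are for the scalar operator or for convex cocompact ends. For (a), your argument via Theorem~\ref{lo} hinges on the nondegeneracy of the leading Harish-Chandra form $c_\pi$ on the minimal $K$-type, which you flag but do not verify (and which is delicate: the leading asymptotic coefficient can vanish on particular $K$-types, so positivity of each summand does not automatically force the projection to vanish). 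Your alternative transfer-operator route would require a critical-parameter spectral gap for a twisted Ruelle operator over a countable-alphabet coding of a cusped limit set, together with the zeta-function factorization linking it to $\Delta_{\tau_\upsilon}$-resonances in the geometrically finite (non--convex-cocompact) setting; neither ingredient is known. These missing inputs are precisely why Conjecture~\ref{conj} remains a conjecture.
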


If $\delta>(n-1)/2$ for $n=2,3$, or if $\delta>(n-2)$ for $n\ge 4$, then
$L^2(\G\ba G)$ has a strong spectral gap.
This observation follows from the classification of $\hat G_{\op{non-temp}}$ which says that
there is no non-spherical complementary series representation $\mathcal U(\upsilon, s-n+1)$ of parameter $n-2<s<n-1$ \cite{Hi}.

Our theorems are proved under the following slightly weaker spectral gap property assumption:
\begin{dfn}\cite{MO2}\label{introsg}\label{sngg}  \rm  We say that $L^2(\G\ba G)$ has a {\it spectral gap} if there exist
$\tfrac{n-1}{2}<s_0=s_0(\G)<\delta$ and $n_0=n_0(\G)\in \N$ such that 
   \begin{enumerate}
    \item the multiplicity of $\mathcal U(\upsilon,\delta-n+1)$ contained in $L^2(\G\ba G)$ is at most $\dim(\upsilon)^{n_0}$ for any $\upsilon\in \hat M$;
\item  $L^2(\G\ba G)$ does not weakly contain any  $\mathcal U(\upsilon,s-n+1)$ with $s\in (s_0, \delta)$ and  $\upsilon\in \hat M$.\end{enumerate}
  The pair $(s_0(\G), n_0(\G))$ will be referred to as the spectral gap data for $\G$.
\end{dfn}

The spectral gap hypothesis implies that for $\Psi_1,\Psi_2 \in L^2(\G\ba G)$, the leading term of the
asymptotic expansion of the matrix coefficient $\la a_t .\Psi_1, \Psi_2\ra $ 
is determined  by  $\la a_t. P_{\delta} (\Psi_1), P_{\delta} (\Psi_2)\ra $ where $P_{\delta}$ is the projection operator
from $L^2(\G\ba G)$ to $\mathcal H_{\delta}^\dag$, which is the sum of all complementary
series representations $\mathcal U(\upsilon, \delta-n+1)$, $\upsilon\in \hat M$ occurring
in $L^2(\G\ba G)$ as sub-representations.

Building up on the work of Harish-Chandra on the asymptotic
behavior of the Eisenstein integrals (cf. \cite{Wa}, \cite{Wa2}), we obtain an asymptotic formula for 
$\la a_t v, w\ra $ for all $K$-isotypic vectors $v,w\in \mathcal H_{\delta}^\dag$.
This extension alone does not quite explain the leading term
of $\la a_t P_{\delta} (\Psi_1), P_{\delta} (\Psi_2)\ra $ in terms of functions $\Psi_1$ and $\Psi_2$; 
however, with the help of Theorem \ref{lo},  we are able to
 prove the following:
\begin{thm}\cite{MO2}\label{harmixing} Suppose that $L^2(\G\ba G)$ possesses a spectral gap. 
  There exist $\eta_0>0$ and  $\ell\in \N$ 
  such that
for any $\Psi_1, \Psi_2\in C_c^\infty(\G\ba G)$, as $t\to \infty$,
$$ e^{(n-1-\delta)t} \la a_t \Psi_1, \Psi_2\ra 
 = \frac{m^{\BR}(\Psi_1)\cdot m^{\BR}_*(\Psi_2)}{|m^{\BMS}|}
+O(\mathcal S_\ell(\Psi_1) \mathcal S_\ell(\Psi_2) e^{-\eta_0 t} ).$$
\end{thm}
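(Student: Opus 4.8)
The plan is to combine three ingredients: the spectral gap hypothesis, which isolates the subrepresentation carrying the leading term; Harish-Chandra's asymptotic expansion of Eisenstein integrals, which produces an \emph{effective} asymptotic for the matrix coefficients of that subrepresentation; and Roblin's non-effective Theorem~\ref{lo}, which identifies the resulting bilinear form with $\tfrac{1}{|m^{\BMS}|}m^{\BR}(\Psi_1)\,m^{\BR}_*(\Psi_2)$. First I would reduce to $K$-isotypic vectors: writing $\Psi_i=\sum_\rho \Psi_{i,\rho}$ for the $K$-isotypic decomposition and using the elliptic regularity estimate $\|\Psi_{i,\rho}\|_2\ll_N (1+\dim\rho)^{-N}\mathcal S_{\ell(N)}(\Psi_i)$ together with $|\langle a_t\Psi_{1,\rho},\Psi_{2,\rho'}\rangle|\le \|\Psi_{1,\rho}\|_2\|\Psi_{2,\rho'}\|_2$, one truncates the double sum at $\dim\rho\le R$ with $R$ a small power of $e^{t}$; the tail then contributes an error of the required shape, at the cost of a power of $t$ that is absorbed into $\eta_0$.

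Next, using the spectral gap I would split $\Psi_i=P_\delta(\Psi_i)+\Psi_i^\perp$ with $\Psi_i^\perp$ in the orthogonal complement $\mathcal H_\delta^{\dag\perp}$. By Definition~\ref{sngg}, $\mathcal H_\delta^{\dag\perp}$ does not weakly contain any $\mathcal U(\upsilon,s-n+1)$ with $s\in(s_0,\delta]$, while its tempered part has matrix coefficients of $K$-finite vectors bounded by $e^{-(n-1)t/2}$ times a polynomial in $t$ depending on the $K$-types. Hence for $K$-isotypic vectors of dimension at most $R$, the cross terms $\langle a_t\Psi_1^\perp,P_\delta\Psi_2\rangle$, $\langle a_t P_\delta\Psi_1,\Psi_2^\perp\rangle$ and $\langle a_t\Psi_1^\perp,\Psi_2^\perp\rangle$, after multiplication by $e^{(n-1-\delta)t}$, are $O(\mathcal S_\ell(\Psi_1)\mathcal S_\ell(\Psi_2)\,e^{-(\delta-s_0)t}(1+t)^{c})$ --- here I use $(n-1)/2<s_0<\delta$, so both the complementary-series part below $s_0$ and the tempered part decay strictly faster than the main rate. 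This is absorbed into the error term.

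It remains to analyze $\langle a_t P_\delta(\Psi_1),P_\delta(\Psi_2)\rangle$. I would decompose $\mathcal H_\delta^{\dag}$ into its irreducible constituents $\mathcal U(\upsilon,\delta-n+1)$, whose multiplicities are bounded by $\dim(\upsilon)^{n_0}$, and apply Harish-Chandra's theory of the asymptotic behaviour of Eisenstein integrals (cf. \cite{Wa,Wa2}): for $K$-isotypic $v,w$ in a single copy of $\mathcal U(\upsilon,\delta-n+1)$ one obtains $\langle a_t v,w\rangle=e^{-(n-1-\delta)t}\bigl(c_{v,w}+O(e^{-\eta t})\bigr)$, where $\eta>0$ is controlled by the spacing of the exponents in the expansion and by the gap $\delta-s_0$, and $c_{v,w}$ is the leading Eisenstein-integral coefficient. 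Summing over constituents and $K$-types --- the multiplicity bound and elliptic regularity making the sum absolutely convergent and controlled by $\mathcal S_\ell(\Psi_i)$ --- yields
$$ e^{(n-1-\delta)t}\langle a_t P_\delta(\Psi_1),P_\delta(\Psi_2)\rangle=\mathcal B(\Psi_1,\Psi_2)+O\!\left(\mathcal S_\ell(\Psi_1)\mathcal S_\ell(\Psi_2)\,e^{-\eta t}\right) $$
for a fixed continuous bilinear form $\mathcal B$ on $C_c^\infty(\G\ba G)$. To identify $\mathcal B$, I would invoke Theorem~\ref{lo}: the steps above already show $e^{(n-1-\delta)t}\langle a_t\Psi_1,\Psi_2\rangle\to\mathcal B(\Psi_1,\Psi_2)$, while Roblin's theorem gives the same limit as $\tfrac{1}{|m^{\BMS}|}m^{\BR}(\Psi_1)\,m^{\BR}_*(\Psi_2)$; hence the two agree for all $\Psi_i\in C_c^\infty(\G\ba G)$. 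Taking $\eta_0$ slightly below the minimum of the exponents produced in the three steps gives the theorem.

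\textbf{Main obstacle.} The crux is the Harish-Chandra step: one needs the asymptotic expansion of $\langle a_t v,w\rangle$ for $K$-isotypic $v,w$ in the complementary series $\mathcal U(\upsilon,\delta-n+1)$ with a remainder that is \emph{uniform} both across $K$-types --- so that the error is summable against Sobolev norms --- and across $\upsilon\in\hat M$ --- so that the sum over the (possibly infinitely many) constituents of $\mathcal H_\delta^{\dag}$ converges using only the polynomial multiplicity bound $\dim(\upsilon)^{n_0}$. The non-effective Theorem~\ref{lo} is indispensable here precisely because the spectral description of $P_\delta$ does not by itself express the leading coefficient $\mathcal B$ in terms of the geometric measures $m^{\BR}$, $m^{\BR}_*$ and $m^{\BMS}$.
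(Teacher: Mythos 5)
Your proposal follows the same route the paper outlines for this theorem: use the spectral gap to reduce the leading behaviour to $\la a_t P_{\delta}(\Psi_1), P_{\delta}(\Psi_2)\ra$, obtain an effective asymptotic for $K$-isotypic vectors in $\mathcal H_{\delta}^{\dag}$ from Harish-Chandra's theory of Eisenstein integrals, and identify the resulting bilinear form with $\tfrac{1}{|m^{\BMS}|}m^{\BR}(\Psi_1)\,m^{\BR}_*(\Psi_2)$ via Roblin's Theorem~\ref{lo}. You also correctly isolate the genuine difficulty (uniformity of the expansion over $K$-types and over $\upsilon\in\hat M$, controlled by the multiplicity bound and Sobolev norms), so this matches the argument of \cite{MO2} as described in the paper.
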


We reiterate that the leading term in Theorem \ref{harmixing} is from the ergodic theory and the
error term is from the harmonic analysis and the spectral gap. 

\subsection{Effective mixing for $m^{\BMS}$} 
Theorem \ref{harmixing} can be used to obtain an effective mixing
for the BMS measure (i.e., an effective version of Theorem \ref{im}(1)) for
geometrically finite, Zariski dense subgroups with a spectral gap \cite{MO2}.

For geometrically finite groups with $\delta\le (n-1)/2$,
 we cannot expect Theorem \ref{harmixing} to hold for such groups.
However a recent work of Guillarmou and Mazzeo \cite{GM} establishes 
meromorphic extensions of the resolvents of the Laplacian and of the Poincare series. 

For $\G$ convex cocompact,
 Stoyanov obtained, via the spectral properties of Ruelle transfer operators,
an effective mixing of the geodesic flow for the BMS measure, regardless of the size
of the critical exponent (see \cite{St}). We remark that the idea of using spectral estimates of
Ruelle transfer operators in obtaining an effective mixing
was originated in Dolgopyat's work \cite{Do}.

It will be interesting to
see if the results in \cite{GM} can be used to answer the question:
\begin{Q}
 Prove an effective mixing  of the  geodesic flow for the BMS measure for all geometrically
finite groups.
\end{Q}

\medskip
We close this section by posing the following:
\begin{Q}
Find a suitable analogue of Theorem \ref{harmixing} for a higher rank simple Lie group $G$ such
as $\SL_2(\br)\times \SL_2(\br)$ or $\SL_3(\br)$.
\end{Q}

\section{Distribution of $\G$ in $G$}\label{cg}
For a family  $\{B_T: T>1\}$ of compact subsets in $G$,
 the study of the asymptotic of $\#(\G\cap B_T)$ can be approached directly
by Theorems \ref{lo} and \ref{harmixing}.
The link is given by the following function on $\G\ba G\times \G\ba G$:
$$F_T(g,h):=\sum_{\gamma\in \G}\chi_{B_T}(g^{-1}\gamma h) $$
where $\chi_{B_T}$ denotes the characteristic function of $B_T$. 
Observing that $F_T(e,e)=\# (\G\cap B_T)$, we will explain how the asymptotic behavior of $F_T(e,e)$
is related to the matrix coefficient function for $L^2(\G\ba G)$.
For $g=k_1a_tk_2\in KA^+K$, we have
$dm^{\Haar}(g)= \xi(t) dt dk_1 dk_2 $ where $\xi(t)= e^{(n-1)t}(1+O(e^{-\beta t}))$ for some $\beta>0$.

\medskip

Now assume that $\G$ admits a spectral gap, so that
Theorem \ref{harmixing} holds.
 For any real-valued $\Psi_1,\Psi_2\in C_c^\infty(\G\ba G)$, if we set
 $\Psi_i^k(g):=\Psi_i(gk)$, then we can deduce from Theorem \ref{harmixing} that 
\begin{align}\label{ft} & \la F_T, \Psi_1\otimes \Psi_2\ra_{\G\ba G\times \G\ba G} \notag \\&
=\int_{g\in B_T} \la \Psi_1, g.\Psi_2\ra_{L^2(\G\ba G)} \;  dm^{\Haar}(g) \notag \\
&= \int_{k_1a_tk_2\in B_T} \la \Psi_1^{k_1^{-1}}, a_t. \Psi_2^{k_2} \ra \cdot \xi(t) dt dk_1 dk_2 \notag \\
&=  \tfrac{1}{|m^{\BMS}|}\int_{k_1a_tk_2\in B_T} \left( m^{\BR}_*(\Psi_1^{k_1^{-1}})m^{\BR}( \Psi_2^{k_2}) e^{\delta t} 
+ O(e^{(\delta -\eta)t}) \right) dt dk_1 dk_2   
\end{align}
for some $\eta>0$, with the implied constant depending only on the Sobolev norms of $\Psi_i$'s.

Therefore,
if $F_T(e,e)$ can be {\it effectively approximated} by
$\la F_T, \Psi_\e\otimes \Psi_\e\ra $ for an approximation of identity $\Psi_\e$ in $\G\ba G$,
a condition which depends on the regularity of the boundary of
$B_T$ relative to the Patterson-Sullivan density,
then $F_T(e,e)$ can be computed by evaluating the integral \eqref{ft} 
for $\Psi_1=\Psi_2=\Psi_\e$ and by taking $\e$ to be a suitable power of $e^{-t}$.

To state our counting theorem more precisely, we need:
\begin{dfn}\label{group} 
Define a Borel measure $\mathcal M_{G}$ on $G$ as follows: for $\psi\in C_c(G)$,
\begin{equation*}\label{mdeg}
\mathcal M_{G} (\psi)=\tfrac{ 1}{|m^{\BMS}|} \int_{k_1a_tk_2\in K A^+K} \psi(k_1a_tk_2)
  e^{\delta t} d\nu_o(k_1)dt d\nu_o(k_2^{-1})\end{equation*}
here $\nu_o$ is the $M$-invariant lift to $K$ of the PS measure on $\partial(\bH^n)=K/M$
viewed from $o\in \bH^n$ with $K=\op{Stab}_G(o)$.
\end{dfn}


\begin{dfn}\label{adm1} \rm 
  For a family $\{\B_T\subset  G\}$ of compact subsets
 with $\mathcal M_{G}(\B_T)$ tending to infinity as $T\to \infty$, we say that $\{\B_T\}$
is {\it effectively well-rounded with respect to $\G$}
if there exists $p>0$ such that 
for all small $\e>0$ and $T\gg 1$:
$$\mathcal M_{ G} (B_{T,\e}^+ - B_{T,\e}^-) = O(\e^p \cdot \mathcal M_{G} ( B_T) )$$  
where $B_{T,\e}^+=G_\e B_T G_\e$ and $B_{T,\e}^-=\cap_{g_1, g_2\in G_\e} g_1 B_T g_2$.
Here $G_\e$ denotes a symmetric $\e$-neighborhood of $e$ in $G$.
\end{dfn}

For the next two theorems, we assume that $\G'<\G$ is a subgroup of finite index and both $\G$ and $\G'$ have
spectral gaps.
\begin{thm}\cite{MO2}\label{mo2}
If $\{B_T\}$ is effectively well-rounded with respect to $\G$,  
 then there exists $\eta_0>0$ such that for any $\gamma\in \G$,
$$\# (\G'\gamma \cap B_T )= 
\frac{1}{[\G:\G']}\mathcal M_{G}(B_T)  + O( \mathcal M_{G}(\B_T)^{1-\eta_0})$$
where $\eta_0>0$ depends only on a uniform spectral gap for $\G$ and $\G'$, and the implied
constant is independent of $\G'$ and $\gamma$.
\end{thm}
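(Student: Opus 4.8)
The plan is to run the Duke--Rudnick--Sarnak scheme, using the matrix coefficient asymptotics of Theorem~\ref{harmixing} as the analytic input, exactly along the lines sketched in the paragraph containing \eqref{ft}. First I would set $\Psi_\e$ to be an $L^1$-normalized approximation of the identity supported on $G_\e\cdot(\G'\ba G)$ (pushed down to $\G'\ba G$ rather than $\G\ba G$, since the count is over $\G'$-cosets), so that $F_T^{\G'}(e,e)=\#(\G'\g\cap B_T)$ is squeezed between the two pairings $\la F_T^{\G'},\ (L_{\g}\Psi_\e)\otimes \Psi_\e\ra$ evaluated against $B_{T,\e}^-$ and $B_{T,\e}^+$, where $L_\g$ denotes the translate accounting for the basepoint $\g$. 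Here it is essential that both $\G$ and $\G'$ carry spectral gaps with \emph{uniform} data $(s_0,n_0)$; this is what makes the error exponent $\eta_0$ in Theorem~\ref{harmixing} independent of $\G'$, and hence forces the implied constant in the final estimate to be independent of $\G'$ and of $\g$.

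Next I would insert Theorem~\ref{harmixing} for $\G'\ba G$ into the pairing, as in \eqref{ft}, obtaining
$$\la F_T^{\G'},\ (L_\g\Psi_\e)\otimes \Psi_\e\ra = \tfrac{1}{|m^{\BMS}_{\G'}|}\int_{k_1a_tk_2\in B_T^{\pm}} m^{\BR}_{\G',*}((L_\g\Psi_\e)^{k_1^{-1}})\, m^{\BR}_{\G'}(\Psi_\e^{k_2})\, e^{\delta t}\,dt\,dk_1\,dk_2 + (\text{error}),$$
where the error is $O(\mathcal S_\ell(\Psi_\e)^2\, e^{(\delta-\eta)t})$ integrated against the $B_T$-region. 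Two bookkeeping facts are needed: that the BMS, BR, BR$_*$ measures for $\G'$ are $[\G:\G']$ times the corresponding $\G$-measures under the covering $\G'\ba G\to\G\ba G$ (so all the $[\G:\G']$ factors cancel except the single one in the main term), and that as $\e\to0$ the iterated BR/BR$_*$ integral against $\Psi_\e$ converges to evaluation of $\mathcal M_G$ on $B_T$ in the sense of Definition~\ref{group}. The effective well-roundedness hypothesis, Definition~\ref{adm1}, then bounds the discrepancy $\mathcal M_G(B_{T,\e}^+ - B_{T,\e}^-)$ by $O(\e^p\,\mathcal M_G(B_T))$, which is the source of the first type of error. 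The second type of error comes from the harmonic-analytic remainder: since $\mathcal S_\ell(\Psi_\e)\ll \e^{-\kappa}$ for some $\kappa=\kappa(\ell,\dim G)$, the remainder integral is $O(\e^{-2\kappa}\, e^{(\delta-\eta)T})$ (using $\mathcal M_G(B_T)\asymp e^{\delta T}$ up to the sort of control that well-roundedness provides). Balancing $\e^p\, e^{\delta T}$ against $\e^{-2\kappa}\, e^{(\delta-\eta)T}$ by taking $\e = e^{-cT}$ for a suitable small $c>0$ yields a power saving $\mathcal M_G(B_T)^{1-\eta_0}$ with $\eta_0$ depending only on $p$, $\ell$, $\kappa$ and $\eta$ — all of which are controlled by the uniform spectral gap.

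The main obstacle, and the step that requires the most care, is the approximation argument establishing that $F_T^{\G'}(e,e)$ is \emph{effectively} sandwiched by the smoothed pairings with a quantitative gap of size $O(\e^p\,\mathcal M_G(B_T))$, uniformly in $\g$ and in $\G'$. The subtlety is that $\Psi_\e$ is an approximate identity on $\G'\ba G$, not on $\G\ba G$, so one must check that the injectivity radius issues and the Sobolev norm growth do not degrade when passing to the finite cover — this is where one uses that the index $[\G:\G']$ does not enter the \emph{shape} of the estimate, only its normalization. One also needs that the map $g\mapsto (g^-,g^+)$ interacts well with the $G_\e$-thickening relative to the PS density (the $\e^p$ power), which is precisely the content of well-roundedness, but verifying it is compatible with the basepoint translate $\g$ requires that the implied constants in Definition~\ref{adm1} are unaffected by left-translating $B_T$, i.e. that one may harmlessly replace $B_T$ by $B_T\g^{-1}$ or absorb $\g$ into the test function. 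Once these uniformity points are secured, the rest is the standard DRS bookkeeping.
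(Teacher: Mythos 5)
Your proposal follows the same route the paper sketches around \eqref{ft}: the Duke--Rudnick--Sarnak counting function paired against approximate identities, Theorem \ref{harmixing} as the analytic input, effective well-roundedness to control the sandwiching error, and a choice of $\e$ as a power of $e^{-T}$ to balance the two error terms, with the passage to $\G'$ and the basepoint $\gamma$ handled by the covering/translation bookkeeping you describe. This matches the paper's argument (whose full details are deferred to \cite{MO2}), so no further comparison is needed.
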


If $\|\cdot \|$ is a norm on the space $M_{n+1}(\br)$
of $(n+1)\times (n+1)$ matrices, then the norm ball $B_T=\{g\in \SO(n,1):\|g\|\le T\}$ is
effectively well-rounded with respect to $\G$, and hence Theorem \ref{mo2} applies.

Consider the set
$B_T=\Omega_1 A_T \Omega_2$ where $\Omega_i\subset K$ and $A_T=\{a_t: 0\le t\le T\}$. Then
 $\{B_T\}$ is effectively well-rounded with respect to $\G$ if
 there exists $\beta'>0$ such that the PS-measures of the $\e$-neighborhoods of the boundaries of $\Omega_1 M/M$ and $\Omega_2^{-1}M/M$
 are at most  of order $\e^{\beta'}$ for all small $\e>0$. These conditions on $\Omega_i$ are satisfied for instance
if their boundaries in $K/M$ are disjoint from the limit set $\Lambda(\G)$. But also many (but not all) compact subsets
with piecewise smooth boundary also satisfy this condition (see \cite[sect. 7]{MO2}).

Hence
one can deduce the following:
\begin{thm}  \label{secor} \cite{MO2} If $\{B_T=\Omega_1 A_T \Omega_2\}$ is effectively well-rounded with respect to $\G$, then,
as $T\to \infty$,
\begin{equation*}
\#(\G' \gamma \cap \Omega_1A_T\Omega_2) =\frac{\Xi(\G,\Omega_1,\Omega_2)}{ [\G:\G']}{ e^{\delta T}}
 +O(e^{(\delta -\eta_0)T}) \end{equation*}
where  $\Xi(\G,\Omega_1,\Omega_2):=\tfrac{\nu_o(\Omega_1)\nu_o(\Omega_2^{-1})}{\delta \cdot |m^{\BMS}|}
$ and the implied constant is independent of $\G'$ and $\gamma\in \G$.
\end{thm}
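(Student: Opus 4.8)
The plan is to deduce this from Theorem \ref{mo2} by explicitly computing $\mathcal M_G(B_T)$ for the specific family $B_T = \Omega_1 A_T \Omega_2$ and verifying that the stated effective well-roundedness hypothesis makes Theorem \ref{mo2} applicable. First I would observe that since $\{B_T\}$ is assumed effectively well-rounded with respect to $\G$, Theorem \ref{mo2} gives immediately
$$\#(\G'\gamma \cap \Omega_1 A_T \Omega_2) = \frac{1}{[\G:\G']}\mathcal M_G(B_T) + O\!\left(\mathcal M_G(B_T)^{1-\eta_0}\right),$$
so everything reduces to the evaluation of the main term. By Definition \ref{group}, using the $KA^+K$-coordinates $g = k_1 a_t k_2$, for $B_T = \Omega_1 A_T \Omega_2$ with $A_T = \{a_t : 0 \le t \le T\}$ we get, up to boundary contributions of measure zero in the relevant PS-densities,
$$\mathcal M_G(B_T) = \frac{1}{|m^{\BMS}|}\int_{\Omega_1}\int_0^T\int_{\Omega_2} e^{\delta t}\, d\nu_o(k_1)\, dt\, d\nu_o(k_2^{-1}) = \frac{\nu_o(\Omega_1)\,\nu_o(\Omega_2^{-1})}{|m^{\BMS}|}\int_0^T e^{\delta t}\,dt.$$
Since $\int_0^T e^{\delta t}\,dt = \frac{1}{\delta}(e^{\delta T} - 1) = \frac{1}{\delta}e^{\delta T} + O(1)$, this yields $\mathcal M_G(B_T) = \Xi(\G,\Omega_1,\Omega_2)\,e^{\delta T} + O(1)$ with $\Xi(\G,\Omega_1,\Omega_2) = \frac{\nu_o(\Omega_1)\nu_o(\Omega_2^{-1})}{\delta\,|m^{\BMS}|}$ exactly as stated. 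Substituting into the displayed asymptotic and absorbing the lower-order terms gives the claimed formula with error $O(e^{(\delta-\eta_0)T})$, after relabeling $\eta_0$.

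The one genuinely substantive point, rather than routine bookkeeping, is the passage between the $KA^+K$-integral in Definition \ref{group} and the set-theoretic product $\Omega_1 A_T \Omega_2$: the Cartan decomposition is not injective on all of $G$, and $\Omega_1 A_T \Omega_2$ only agrees with $\{k_1 a_t k_2 : k_1 \in \Omega_1,\ t \in [0,T],\ k_2 \in \Omega_2\}$ up to the $M$-ambiguity in $k_1, k_2$ and up to the lower-dimensional locus $t = 0$. The discrepancy is controlled precisely because $\nu_o$ is the $M$-invariant lift of the PS measure on $K/M$, so the integrand is $M$-bi-invariant in the appropriate sense and the fibers contribute nothing extra; the $t=0$ slice has $\mathcal M_G$-measure zero. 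I would also remark that the hypotheses on $\Omega_i$ guaranteeing effective well-roundedness — that the $\e$-neighborhoods of $\partial(\Omega_1 M/M)$ and $\partial(\Omega_2^{-1} M/M)$ have PS-measure $O(\e^{\beta'})$ — are exactly what is needed both to apply Theorem \ref{mo2} and to ensure that $\nu_o(\partial\Omega_i) = 0$, so that the main-term computation above is unaffected by how one treats the boundaries. Finally, the uniformity in $\G'$ and in $\gamma \in \G$ is inherited verbatim from Theorem \ref{mo2}, since $\Xi(\G,\Omega_1,\Omega_2)$ depends only on $\G$ and the sets $\Omega_i$.
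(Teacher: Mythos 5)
Your proposal is correct and follows exactly the route the paper intends: the survey presents Theorem \ref{secor} as a direct consequence of Theorem \ref{mo2} ("Hence one can deduce the following"), and your explicit evaluation of $\mathcal M_G(\Omega_1 A_T\Omega_2)=\tfrac{\nu_o(\Omega_1)\nu_o(\Omega_2^{-1})}{|m^{\BMS}|}\cdot\tfrac{e^{\delta T}-1}{\delta}$ together with the relabeling of $\eta_0$ is precisely the missing bookkeeping. Your remarks on the $M$-ambiguity in the $KA^+K$ coordinates and on the role of the PS-measure boundary condition are consistent with the paper's discussion of when such families are effectively well-rounded.
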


The fact that the only dependence of $\G'$ on the right hand side of the above formula
is on the index $[\G:\G']$ is of crucial importance for our intended applications to an affine sieve
(cf. subsection \ref{affine2}).
 Bourgain, Kontorovich and Sarnak \cite{BKS} showed Theorem \ref{secor} for the case $G=\SO(2,1)$
via an explicit computation of matrix coefficients using the Gauss hypergeometric functions,
and Vinogradov \cite{V} generalized their method to $G=\SO(3,1)$.
We note that in view of Theorem \ref{lo}, the non-effective versions of Theorems \ref{mo2} and \ref{secor}  hold
for any Zariski dense  $\G$ with $|m^{\BMS}|<\infty $; this
was obtained in \cite{Ro} (see also \cite{OS}) with a different proof. 
For $\G$ lattice, Theorem 4.5 is known in much greater generality \cite{GO}.




\section{Asymptotic distribution of $\G\ba \G H a_t$}
The counting problem in $H\ba G$ for $H$ a non-trivial subgroup can be approached
via studying the asymptotic distribution of translates $\G\ba \G Ha_t$ as mentioned in the introduction.
We will assume in this section that
 $H$ is either a symmetric subgroup or a horospherical subgroup of $G$. Recall that $H$ is symmetric means that $H$
 is the subgroup of fixed points under an involution of $G$.
In this case, we can relate the distribution of  $\G\ba \G Ha_t$ to
the matrix coefficient function of $L^2(\G\ba G)$.

We fix a generalized Cartan decomposition $G=HAK$ (for $H$ horospherical,
it is just an Iwasawa decomposition) where $K$ is a maximal compact subgroup
and $A$ is a one-dimensional subgroup of diagonalizable elements.
As before, we parametrize $A=\{a_t: t\in \br \}$ so that the right multiplication by $a_t$ on $G/M=\T^1(\bH^n)$
corresponds to the geodesic flow for time $t$.

 When $H$ is horospherical,  we will assume that $H=N^+$, i.e, the expanding horospherical subgroup with respect to $a_t$.


Any symmetric subgroup $H$ of $G$ is known to be locally isomorphic to $\SO(k, 1)\times \SO(n-k)$ for some $0\le k\le n-1$, and 
the $H$-orbit of the identity coset in $G/K$ (resp. $G/M$) is (resp. the unit normal bundle to)
a complete totally geodesic subspace $\bH^k$ of
$\bH^n$ of dimension $k$.
The right multiplication by $a_t$ on $G/M$ corresponds to the geodesic flow
and the image of $Ha_t$ in $G/M$ represents the expansion of the totally geodesic
subspace of dimension $k$ by distance $t$.

\medskip

\subsection{Measures on $\G\ba \G H$ associated to a conformal density}
The leading term in the description of the asymptotic distribution of $\G\ba \G H a_t$ turns out to be
 a new measure on $\G\ba \G H$ associated to the PS density (see \cite{OS}).
We assume that $\G\ba \G H$ is closed in $\G\ba G$ in the rest of this section.

For a $\G$-invariant conformal density
$\{\mu_x\}$ on
$\partial{(\bH^n)}$ of dimension $\delta_\mu$,
  define a measure $\tilde \mu_{H}$ on
 $H/(H\cap M)$  by
\begin{equation*} 
d \tilde\mu_{H}([h]) =
e^{\delta_\mu \beta_{[h]^+}(o, [h])}\; \;d\mu_o([h]^+) 
\end{equation*}
where $o\in \bH^n$ and $[h]\in H/(H\cap M)$ is considered as an element of $G/M=\T^1(\bH^n)$ under
the injective map $H/(H\cap M)\to G/M$.

By abuse of notation, we use the same notation $\tilde \mu_{H}$ for the  $H\cap M$-invariant lift of $\tilde \mu_{H}$ to $H$:
for $\psi\in C_c(\G\ba G)$,
$$\tilde \mu_H(\psi)=\int_{H/(H\cap M)} \psi^{H\cap M} (x) \; d\tilde \mu_{H} (x)$$
where $\psi^{H\cap M} (x)=\int_{H\cap M}\psi(xm) d_{H\cap M}(m)$ for the $H\cap M$-invariant probability measure
$d_{H\cap M}$.
This definition is independent of the choice of $o\in \bH^n$ and
the measure $\tilde \mu_H$ is $H\cap \G$-invariant from the left, and hence
induces a measure $\mu_H$ on $(H\cap \G)\ba H$ or equivalently on $\G \ba \G H$.

For the PS density $\{\nu_x\}$ and the Lebesgue density $\{m_x\}$, the following
two locally finite measures on  $\G \ba \G H$ are of special importance:
\begin{itemize}
\item{Skinning measure:} $\mu^{\PS}_H=\nu_H$;
 \item{$H$-invariant measure:} $\mu^{\Leb}_H=m_H$.
\end{itemize}

We remark that $\mu^{\PS}_H$ is different from $\mu^{\Leb}_H$ in general even when
$H\cap \G$ is a lattice in $H$.

\subsection{Finiteness of the skinning measure $\mu^{\PS}_H$}
The finiteness of the skinning measure $\mu^{\PS}_H$ turns out to be
the precise replacement for the finiteness of the volume measure $\mu^{\Leb}_H$, in extending
the equidistribution statement from $\G$ lattices to thin subgroups.

When is the skinning measure $\mu_H^{\PS}$ finite? This question is completely answered in \cite{OS}
for $\G$ geometrically finite. First, when $H$ is a horospherical subgroup, the support of $\mu_H^{\PS}$ 
is compact.
When $H$ is a symmetric subgroup, i.e., isomorphic to $\SO(k,1)\times \SO(n-k)$ locally,
the answer to this question depends on the notion of the parabolic co-rank of $\G\cap H$: let $\Lambda_p(\G)$ denote the set
of all parabolic limit points of $\G$.
For $\xi\in \Lambda_p(\G)$,  the stabilizer $\G_\xi$ has a free abelian subgroup of finite index, whose rank
is defined to be the rank of ${\xi}$ (or the rank of $\G_{\xi}$).

\begin{dfn}\rm  The parabolic corank of $\G\cap H$ in $\G$ is defined to be the maximum of 
the difference $\text{rank}(\G_\xi)-\text{rank}(\G\cap H)_\xi$ over
all $\xi\in\Lambda_p(\G)\cap \partial(\bH^k).$
\end{dfn}

\begin{thm}\cite{OS} Let $\G$ be geometrically finite.
\begin{enumerate}
 \item $\mu_H^{\PS}$ is compactly supported if and only if the parabolic corank of $\G\cap H$ is zero.
\item
 $|\mu_{H}^{\PS}|<\infty$ if and only if
$\delta$ is bigger than the parabolic corank of $\G\cap H$.
\end{enumerate}
\end{thm}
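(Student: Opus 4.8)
The plan is to analyze the skinning measure $\mu_H^{\PS}$ through its behavior in the cusps of $\G\ba\bH^n$, since the support of $\tilde\mu_H$ on $H/(H\cap M)$ is carried (after taking the $H\cap\G$-quotient) by directions $[h]$ with $[h]^+\in\Lambda(\G)$, and geometric finiteness lets us decompose $\G\ba\G H$ into a compact part plus finitely many ``cuspidal'' pieces, one for each $\G$-conjugacy class of parabolic fixed points $\xi$ in $\Lambda_p(\G)\cap\partial(\bH^k)$. First I would set up, for each such $\xi$, horoball coordinates based at $\xi$: conjugate so that $\xi=\infty$, so that the stabilizer $\G_\xi$ acts on a horoball by a group containing a finite-index free abelian lattice $\Z^{r}$ in the $(n-1)$-dimensional horospherical group, where $r=\text{rank}(\G_\xi)$; similarly $(\G\cap H)_\xi$ is virtually $\Z^{r'}$ with $r'=\text{rank}((\G\cap H)_\xi)$, sitting inside the $(k-1)$-dimensional horospherical group of $H$. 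The totally geodesic $\bH^k$ and its unit normal bundle $H/(H\cap M)$ then get parametrized so that the part of $\tilde\mu_H$ in this cusp, after quotienting by $(\G\cap H)_\xi$, becomes an explicit integral over a fundamental domain for $\Z^{r'}$ acting on (a neighborhood of $\xi$ inside $\partial\bH^k$) $\times\,\br_{>0}$ (the ``height'' along $H a_t$).

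The key computation is the growth rate of the Patterson--Sullivan density $\nu_o$ near a parabolic point $\xi$ of rank $r$: by Sullivan's shadow lemma and its refinements for geometrically finite groups, $\nu_o$ of a small shadow/ball of radius $e^{-t}$ around $\xi$ is comparable to $e^{-t\delta}e^{-t(\delta-r)}$, i.e. $\nu_o$ has a ``global measure'' type estimate $\nu_o(B(\xi,\rho))\asymp \rho^{2\delta-r}$ near such $\xi$. Plugging this into the coordinate expression for $\tilde\mu_H$: the $H\cap M$-lifted, $(\G\cap H)_\xi$-quotiented integral over the cusp reduces, after changing to the height variable $s$ along the normal geodesics and integrating over the $\Z^{r'}$-fundamental domain, to an integral of the schematic form $\int^{\infty} e^{-(\text{something})s}\,(\text{polynomial factors in }s)\,ds$ whose convergence is governed precisely by the exponent $\delta-(r-r')$. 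Carrying the Busemann-function bookkeeping through, one finds the cuspidal piece at $\xi$ is finite iff $\delta>r-r'=\text{rank}(\G_\xi)-\text{rank}((\G\cap H)_\xi)$, and it is compactly supported (the integral is actually over a bounded range of $s$) iff this difference is zero; taking the maximum over the finitely many $\xi\in\Lambda_p(\G)\cap\partial(\bH^k)$ gives both claims, since the compact core contributes a finite, compactly supported amount and the horospherical case $H=N^+$ has no parabolic contribution transverse to $H$ at all, forcing compact support.

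Concretely, the steps in order: (i) reduce to geometrically finite thickness estimates and the Bowditch decomposition of the convex core into a compact part and standard cusp regions; (ii) for each cusp, pick horoball coordinates centered at $\xi$ and write $\tilde\mu_H$ explicitly using the defining formula $d\tilde\mu_H([h])=e^{\delta\beta_{[h]^+}(o,[h])}d\nu_o([h]^+)$; (iii) prove/quote the Patterson--Sullivan shadow estimate $\nu_o(\text{shadow of radius }e^{-t}\text{ at }\xi)\asymp e^{-t(2\delta-r)}$ and the companion lower bound; (iv) decompose the $(\G\cap H)_\xi$-fundamental domain as (bounded part in $\partial\bH^k$) $\times$ (a $\Z^{r'}$-fundamental domain) $\times$ (half-line of heights), substitute, and estimate the resulting integral against $\delta-(r-r')$; (v) assemble. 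The main obstacle is step (iii)--(iv): getting the \emph{matching} upper and lower bounds for the PS-mass near a parabolic point of arbitrary rank with enough uniformity in the transverse $H$-directions, and correctly identifying which rank enters — it is the rank of $\G_\xi$ for the ambient group minus the rank of $(\G\cap H)_\xi$, not the full rank, because the $\Z^{r'}$-quotient along $H$ already ``uses up'' that many directions. Handling the case where $\xi$ is a bounded parabolic point of $\G$ but the intersection $\G_\xi\cap H$ is genuinely smaller-rank (the phenomenon that makes $\mu_H^{\PS}$ differ from $\mu_H^{\Leb}$) is exactly where the parabolic corank invariant is forced on us, and is the delicate heart of the argument.
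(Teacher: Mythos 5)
Your outline reproduces, in both structure and substance, the argument of \cite{OS} (the survey itself only cites the result, so that is the proof to compare with): geometric finiteness gives a decomposition of $\supp \mu_H^{\PS}$ into a compact part plus horoball excursions at the bounded parabolic fixed points $\xi\in\Lambda_p(\G)\cap\partial(\bH^k)$; in horoball coordinates at such a $\xi$ the Stratmann--Velani-type global measure estimate $\nu_o(B(\xi,\rho))\asymp\rho^{2\delta-\operatorname{rank}(\G_\xi)}$, refined transversally to $\bH^k$ over a fundamental domain for $(\G\cap H)_\xi$, turns the cuspidal mass into a dyadic sum $\sum_R R^{(r-r')-\delta}$, giving finiteness exactly when $\delta>\operatorname{rank}(\G_\xi)-\operatorname{rank}((\G\cap H)_\xi)$ and compact support exactly when $r=r'$. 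You also correctly isolate the genuinely delicate step (two-sided PS-mass bounds uniform in the directions transverse to $\bH^k$, plus the bookkeeping ensuring only finitely many cuspidal pieces contribute), so the proposal is correct and takes essentially the same route as the source.
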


As we show $\text{rank}(\G_\xi)-\text{rank}(\G\cap H)_\xi \le n-k$ for
all $\xi\in\Lambda_p(\G)\cap \partial(\bH^k) $, we have:

\begin{cor}\cite{OS} If $\delta>(n-k)$,
 then $|\mu_{H}^{\PS}|<\infty$.
\end{cor}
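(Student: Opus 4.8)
The plan is to deduce the corollary from part (2) of the preceding theorem, which says $|\mu_H^{\PS}| < \infty$ precisely when $\delta$ exceeds the parabolic corank of $\G \cap H$. Thus it suffices to establish the combinatorial inequality asserted just before the statement: for every $\xi \in \Lambda_p(\G) \cap \partial(\bH^k)$, one has $\operatorname{rank}(\G_\xi) - \operatorname{rank}((\G\cap H)_\xi) \le n-k$. Granting this, the parabolic corank of $\G\cap H$ is at most $n-k$, so the hypothesis $\delta > n-k$ forces $\delta$ to be strictly larger than the parabolic corank, and part (2) of the theorem yields $|\mu_H^{\PS}| < \infty$.

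So the real content is the rank inequality. First I would fix a parabolic point $\xi \in \Lambda_p(\G) \cap \partial(\bH^k)$ and pass to the stabilizer $\G_\xi$, which by geometric finiteness contains a free abelian subgroup of finite index; its rank $r = \operatorname{rank}(\G_\xi)$ is the rank of a maximal parabolic subgroup fixing $\xi$. The key structural fact is that $\G_\xi$ preserves the horospheres based at $\xi$, and acts on each such horosphere — a copy of Euclidean space $\R^{n-1}$ — as a group of Euclidean isometries; by the Bieberbach theorem a finite-index subgroup of $\G_\xi$ is a lattice of translations in some $r$-dimensional subspace, so $r \le n-1$. Now I would use that $\xi$ also lies on the totally geodesic $\bH^k = H \cdot x_0$. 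The point is that $(\G \cap H)_\xi = \G_\xi \cap H$ stabilizes $\xi$ inside the smaller space $\bH^k$, hence acts by Euclidean isometries on horospheres of $\bH^k$ based at $\xi$, which are copies of $\R^{k-1}$; its translational part lies in a $(k-1)$-dimensional Euclidean subspace sitting inside the ambient $\R^{n-1}$.

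The heart of the matter is comparing the translational lattices. The translation subgroup $T$ of (a finite-index subgroup of) $\G_\xi$ sits in $\R^{n-1} = V_{\parallel} \oplus V_{\perp}$, where $V_{\parallel}$ is the tangent space to the horosphere of $\bH^k$ and $V_\perp$ its orthogonal complement, with $\dim V_\perp = n-k$. The translations coming from $H$, namely those in $(\G\cap H)_\xi$, are exactly $T \cap V_{\parallel}$ (up to finite index), since an element of $\G_\xi$ lies in $H$ iff it preserves the subspace $\bH^k$, and a translation preserves $\bH^k$ iff its vector is tangent to it. Hence $\operatorname{rank}((\G\cap H)_\xi) = \operatorname{rank}(T \cap V_\parallel)$, and the quotient $T / (T\cap V_\parallel)$ embeds into $\R^{n-1}/V_\parallel \cong V_\perp$, giving $\operatorname{rank}(\G_\xi) - \operatorname{rank}((\G\cap H)_\xi) = \operatorname{rank}\bigl(T/(T\cap V_\parallel)\bigr) \le \dim V_\perp = n-k$, as desired. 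I would then simply note that taking the maximum over all such $\xi$ bounds the parabolic corank by $n-k$ and invoke part (2).

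The main obstacle I anticipate is making the claim ``$\G_\xi \cap H$ equals the $H$-tangential part of $T$, up to finite index'' fully rigorous: one must be careful that the involution defining $H$ (equivalently, the totally geodesic subspace $\bH^k$) interacts correctly with the parabolic structure at $\xi$, i.e.\ that $\bH^k$ is genuinely $(\G\cap H)_\xi$-invariant and that the orthogonal decomposition $\R^{n-1} = V_\parallel \oplus V_\perp$ is respected by the relevant finite-index subgroup. One should also check the bookkeeping around passing to finite-index free abelian subgroups on both sides simultaneously, so that the rank comparison is not corrupted; but since rank is a commensurability invariant this is routine. Everything else — the Bieberbach/Euclidean-crystallographic description of parabolic stabilizers in $\SO(n,1)$ and the elementary linear algebra of sublattices — is standard.
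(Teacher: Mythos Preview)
Your reduction to the rank inequality via part (2) of the preceding theorem is exactly the paper's argument; the paper simply asserts the inequality $\operatorname{rank}(\G_\xi)-\operatorname{rank}((\G\cap H)_\xi)\le n-k$ as proved in \cite{OS} and then invokes (2). So the strategy is right.

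However, your attempted proof of the rank inequality has a genuine gap. The step
\[
\operatorname{rank}\bigl(T/(T\cap V_\parallel)\bigr)\ \le\ \dim V_\perp = n-k
\]
does not follow from the mere fact that $T/(T\cap V_\parallel)$ embeds in $V_\perp\cong\R^{n-k}$: a free abelian group of arbitrary rank embeds in $\R$ (think of $\Z+\sqrt{2}\,\Z$). Concretely, take $n=4$, $k=2$, $T=\Z^3\subset\R^3$, and $V_\parallel$ the line spanned by $e_1+\sqrt 2\,e_2$; then $T\cap V_\parallel=0$ and the rank drop is $3>2=n-k$. What you need is that the projection of $T$ to $V_\perp$ is \emph{discrete}; only then does its rank not exceed $\dim V_\perp$.

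This is precisely where the standing hypothesis that $\G\ba\G H$ is closed enters, and you never used it. Near the cusp at $\xi$, the closedness of $\G\ba\G H$ forces the image of the horosphere slice $V_\parallel$ in the flat cusp cross–section $T\ba\R^{n-1}$ to be closed; equivalently $T+V_\parallel$ is closed in $\R^{n-1}$, which is exactly the statement that the projection of $T$ to $V_\perp$ is discrete. With that in hand your lattice argument goes through. So the approach is salvageable, but as written the key linear–algebra step is false without invoking closedness.
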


For instance, if we assume $\delta>(n-1)/2$, then $|\mu_H^{\PS}|<\infty$ whenever $k\ge (n+1)/2$.
 
\subsection{Distribution of $\G\ba \G H a_t$}
We first recall:
\begin{thm}\label{drs} 
 Suppose that $\G$ is a lattice in $G$ and that $H\cap \G$ is a lattice in $H$. In other words,
$|m^{\Haar}|<\infty$ and $|m^{\Leb}_H|<\infty$.
Then there exist $\eta_0>0$ (depending only on the spectral gap for $\G$) and $\ell\in \N$ such that
for any $\Psi\in C_c^\infty(\G\ba G)$, as $t \to \infty$,
$$\int_{\G\ba \G H} \Psi(ha_t)d\mu^{\Leb}_H(h) =\frac{|\mu^{\Leb}_H|}{ |m^{\Haar}|}\cdot  m^{\Haar}(\Psi) + O(\mathcal S_\ell(\Psi)
\cdot e^{-\eta_0 t}). $$
\end{thm}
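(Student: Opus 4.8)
The plan is to reduce the equidistribution of the translate $\G\ba\G H a_t$ to the asymptotics of matrix coefficients of $L^2(\G\ba G)$, following the Duke--Rudnick--Sarnak/Eskin--McMullen philosophy already outlined in the introduction. First I would fix a bump function: for small $\e>0$, choose a nonnegative $\phi_\e\in C_c^\infty(\G\ba G)$ supported on a ball of radius $\e$ around the identity coset, with $\int \phi_\e\, dm^{\Haar}=1$, and with Sobolev norms $\mathcal S_\ell(\phi_\e)=O(\e^{-a\ell})$ for a fixed $a$ depending only on $\dim G$. The key elementary point is that, because $H$ is symmetric (or horospherical) and we have the generalized Cartan decomposition $G=HAK$, a small neighborhood of a point $h a_t$ in $G$ decomposes, up to controlled distortion, into an $H$-direction, an $A$-direction, and a $K$-direction; the $A$- and $K$-directions are transverse to $Ha_t$ and conjugation by $a_t$ expands/contracts them in a way compatible with the horospherical structure. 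This lets one write the ``thickened'' integral
\[
I_\e(t):=\int_{\G\ba G}\Big(\int_{\G\ba\G H}\Psi(h a_t g)\,d\mu^{\Leb}_H(h)\Big)\phi_\e(g)\,dm^{\Haar}(g)
\]
on the one hand as a small perturbation of $\int_{\G\ba\G H}\Psi(ha_t)\,d\mu^{\Leb}_H(h)$, with error $O(\e\,\mathcal S_1(\Psi)\cdot|\mu^{\Leb}_H|)$ coming from moving $\Psi$ by $g\in\supp\phi_\e$, and on the other hand, after unfolding the $H\cap\G$-integral over a fundamental domain and using invariance of $\mu^{\Leb}_H$, as a matrix coefficient $\langle a_t.\Psi_\e',\,\Psi\rangle$ or rather a $K$-average of such, where $\Psi_\e'$ is built from $\phi_\e$ and the geometry of the $H$-slice.

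The second step is to insert Theorem \ref{harmixing} in the form appropriate to the lattice case: when $\G$ is a lattice, $\delta=n-1$, so the exponential weight $e^{(n-1-\delta)t}$ is trivial, the leading term $\frac{m^{\BR}(\cdot)m^{\BR}_*(\cdot)}{|m^{\BMS}|}$ collapses (since for a lattice all the measures $m^{\BMS}, m^{\BR}, m^{\BR}_*, m^{\Haar}$ coincide up to normalization) into $\frac{1}{|m^{\Haar}|}m^{\Haar}(\cdot)m^{\Haar}(\cdot)$, and the error is $O(\mathcal S_\ell(\Psi_1)\mathcal S_\ell(\Psi_2)e^{-\eta_0 t})$ with $\eta_0$ governed by the spectral gap of $\G$. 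Equivalently one may just invoke the classical effective matrix-coefficient bound for lattices with a spectral gap. Plugging in $\Psi_1=\Psi_\e'$ and $\Psi_2=\Psi$, the main term produces $\frac{1}{|m^{\Haar}|}m^{\Haar}(\Psi_\e')\,m^{\Haar}(\Psi)$, and one checks that $m^{\Haar}(\Psi_\e')=|\mu^{\Leb}_H|+O(\e)$: this is precisely the statement that integrating the defining density of $\mu^{\Leb}_H$ against $\phi_\e$ recovers the total skinning(=Lebesgue) mass, using that $\mu^{\Leb}_H$ is the $H$-invariant measure and $|\mu^{\Leb}_H|<\infty$ by hypothesis. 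The error from harmonic analysis is then $O(\mathcal S_\ell(\Psi_\e')\mathcal S_\ell(\Psi)e^{-\eta_0 t})=O(\e^{-b}\mathcal S_\ell(\Psi)e^{-\eta_0 t})$ for some $b=b(\ell)$.

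The third and final step is to balance the two error sources. We have arranged
\[
\int_{\G\ba\G H}\Psi(ha_t)\,d\mu^{\Leb}_H(h)=\frac{|\mu^{\Leb}_H|}{|m^{\Haar}|}m^{\Haar}(\Psi)+O\big(\e\,\mathcal S_1(\Psi)\big)+O\big(\e^{-b}\mathcal S_\ell(\Psi)e^{-\eta_0 t}\big),
\]
and choosing $\e=e^{-\eta_0 t/(b+1)}$ makes both error terms $O(\mathcal S_\ell(\Psi)e^{-\eta_0' t})$ for a new $\eta_0'>0$ depending only on $\eta_0$, $b$, hence only on the spectral gap of $\G$ and on $\dim G$; relabelling $\eta_0'$ as $\eta_0$ and enlarging $\ell$ if necessary gives the claimed formula. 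The main obstacle is the transversality bookkeeping in the first step: one must verify that the $H$-slice inside $\supp\phi_\e$, transported by $a_t$, really is uniformly transverse to $H$ and that the Jacobian of the decomposition $G=HAK$ near $ha_t$ is $e^{(n-1)t}$ up to $1+O(e^{-\beta t})$ (the same $\xi(t)$-type factor appearing in Section \ref{cg}), so that no hidden $t$-dependence leaks into the constants; for $H$ horospherical this is the Iwasawa decomposition and is routine, while for $H$ symmetric it uses the explicit structure $H\simeq\SO(k,1)\times\SO(n-k)$ and the geometric picture of $Ha_t$ as the distance-$t$ expansion of the unit normal bundle of a totally geodesic $\bH^k$. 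Everything else is a soft unfolding argument plus the quantitative input of Theorem \ref{harmixing} specialized to the lattice.
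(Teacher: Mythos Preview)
The paper does not give its own proof of this theorem; it simply cites \cite{DRS} and \cite{EM} for the non-effective statement, and \cite{DRS} (for $H\cap\G\ba H$ compact) together with \cite{BeO} (for the general finite-volume case) for the effective version. Your sketch follows exactly the thickening-plus-matrix-coefficients strategy of those references, and it matches the outline the paper gives a few lines later for Theorems~\ref{eq} and~\ref{eq2}: use the wave-front property of \cite{EM} (what you call ``transversality bookkeeping'') to approximate $\int_{\G\ba\G H}\Psi(ha_t)\,d\mu^{\Leb}_H$ by a genuine matrix coefficient $\la a_t\Psi,\rho_{\mathcal O_H,\e}\ra$, then insert the effective decay of matrix coefficients and optimize in $\e$. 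So your approach is the right one and is essentially the same as the paper's (cited) argument.

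One point worth tightening: the paper singles out the non-cocompact case $(H\cap\G)\ba H$ non-compact as requiring the extra work of \cite{BeO}, and your sketch is a little quick there. Your $\phi_\e$ should really be a bump on $G$ near $e$, not on $\G\ba G$; the function $\Psi_\e'$ you want is the $\G$-periodization of $h\mapsto\int_{(H\cap\G)\ba H}\phi_\e(h^{-1}g)\,d\mu^{\Leb}_H(h)$, and the verification that $m^{\Haar}(\Psi_\e')=|\mu^{\Leb}_H|+O(\e)$ together with the Sobolev bound $\mathcal S_\ell(\Psi_\e')=O(\e^{-b})$ uses that, by the wave-front property, only a \emph{compact} piece $\mathcal O_H$ of $(H\cap\G)\ba H$ contributes for each fixed $\Psi$ (this is exactly the set $\mathcal O_H$ the paper introduces after Theorem~\ref{eq2}). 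Without isolating that compact piece first, the Sobolev norm of $\Psi_\e'$ need not be controlled when $(H\cap\G)\ba H$ is unbounded. Once you insert that step, the rest of your balancing argument goes through as written.
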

In fact, the above theorem holds in much greater generality of any connected semisimple Lie group: the non-effective statement is due to \cite{DRS} (also see \cite{EM}).
For the effective statement, see \cite{DRS} for the case when $H\cap \G\ba H$ is compact and \cite{BeO} in general.

An analogue of Theorem \ref{drs} for discrete groups which are not necessarily lattices is given as follows:
\begin{thm} \cite{OS} \label{eq} Let $\G$ be Zariski dense with $|m^{\BMS}|<\infty$. Suppose $|\mu_H^{\PS}|<\infty$. 
Then for any $\Psi\in C_c(\G\ba G)$,
$$\lim_{t\to \infty} e^{(n-1-\delta)t} \int_{ h\in \G\ba \G H}\Psi(ha_t) d\mu^{\Leb}_H (h) =\frac{|\mu_H^{\PS}|}{|m^{\BMS}|} m^{\BR}(\Psi) .$$
\end{thm}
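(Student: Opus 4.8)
The plan is to relate the integral $\int_{\G\ba\G H}\Psi(ha_t)\,d\mu_H^{\Leb}(h)$ to the matrix coefficient $\la a_t.\Psi,\cdot\ra$ via the usual thickening/unfolding trick, using a bump function transverse to the $H$-orbit, and then invoke Roblin's asymptotic (Theorem \ref{lo}). First I would fix a small $\e>0$ and, using the generalized Cartan decomposition $G=HAK$, construct an approximation to the identity $\rho_\e\in C_c(\G\ba G)$ supported on an $\e$-neighborhood of $\G\ba\G H$, built so that $\rho_\e$ is (up to controlled error) a product of $\mu_H^{\Leb}$ along the $H$-direction and normalized Lebesgue bump functions in the complementary $A$- and $K$-directions. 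The key identity is that, since $m^{\Haar}$ locally disintegrates against $d\mu_H^{\Leb}\,dm_A\,dm_K$ (this is exactly the content of the $\mu_H^{\Leb}=m_H$ definition together with $G$-invariance of $m^{\Haar}$), one has
\[
\la a_t.\Psi,\rho_\e\ra_{L^2(\G\ba G)} \;\approx\; \int_{\G\ba\G H}\Psi(ha_t)\,d\mu_H^{\Leb}(h)
\]
up to an error that is $O(\e)$ times a $C^1$-norm of $\Psi$, uniformly in $t$ — here one uses that $a_t$ normalizes nothing problematic and that translating the thin transversal by $a_t$ only distorts it in a way absorbed into the $\e$-error after multiplying by $e^{(n-1-\delta)t}$ and bounding by Theorem \ref{lo}.

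Next I would apply Theorem \ref{lo} to the pair $\Psi,\rho_\e$: as $t\to\infty$,
\[
e^{(n-1-\delta)t}\la a_t.\Psi,\rho_\e\ra \;\longrightarrow\; \frac{m^{\BR}(\Psi)\cdot m^{\BR}_*(\rho_\e)}{|m^{\BMS}|}.
\]
The remaining point is to identify $\lim_{\e\to 0} m^{\BR}_*(\rho_\e)$ with $|\mu_H^{\PS}|$. This is where the hypothesis $|\mu_H^{\PS}|<\infty$ enters crucially: since $m^{\BR}_*=m^{\nu,m}$ disintegrates, in the $(v^+,v^-,s)$-coordinates, as the PS density in the $v^-$-direction against Lebesgue in the other two directions, and since the $H$-orbit direction corresponds (for the geometry of $H=\SO(k,1)\times\SO(n-k)$ or $H=N^+$) precisely to the $v^+$-variable being constrained while $v^-$ ranges over the relevant boundary sphere with PS-weight, the transverse integral of $\rho_\e$ against $m^{\BR}_*$ converges to the skinning mass $\int_{\G\ba\G H} d\mu_H^{\PS}=|\mu_H^{\PS}|$. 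One must check that the $\e$-thickening in the $v^+$ and $s$ directions, against the (locally finite) Lebesgue factors of $m^{\BR}_*$, contributes a factor tending to $1$ after normalization, and that the PS-mass does not escape — finiteness of $|\mu_H^{\PS}|$ guarantees tightness.

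The main obstacle is the case when $\mu_H^{\PS}$ is \emph{not} compactly supported (which is genuinely possible under the hypotheses here, by the finiteness criterion $\delta>\text{pb-corank}$). In that regime the thickening argument above is not uniform: the transversal to $\G\ba\G H$ escapes into cusps, the bump function $\rho_\e$ cannot be taken with uniformly bounded support, and the error term in the comparison $\la a_t.\Psi,\rho_\e\ra\approx\int\Psi(ha_t)\,d\mu_H^{\Leb}$ is no longer controlled by a single $\e$. As indicated in the introduction, the fix is to replace the naive thickening by an analysis of transversal intersections of $\G\ba\G H a_t$ with the weak-stable/weak-unstable foliations near the cusps — one exhausts $\G\ba\G H$ by compact pieces, runs the thickening argument on each piece, and then shows, using the finiteness $|\mu_H^{\PS}|<\infty$ together with the recurrence properties of the geodesic flow under $m^{\BMS}$ (Theorem \ref{im}(1)) and Roblin's equidistribution, that the contribution of the complement is negligible in the limit. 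Making this cusp estimate precise — quantifying how much of $\G\ba\G H a_t$ has wandered off and bounding its $\mu_H^{\Leb}$-mass by the tail of $\mu_H^{\PS}$ — is the technical heart of the argument, and is exactly the delicate transversality analysis carried out in \cite{OS}.
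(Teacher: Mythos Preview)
Your proposal is correct and follows essentially the same route as the paper's sketch: thicken the $H$-orbit to reduce to Roblin's matrix-coefficient asymptotic (Theorem~\ref{lo}) in the compactly supported case, and invoke the transversal-intersection analysis of \cite{OS} when $\mu_H^{\PS}$ is not compactly supported. Two small refinements worth noting: in the compact-support case the paper thickens not all of $\G\ba\G H$ but the compact non-wandering set $\mathcal O_H=\{h:\Psi(ha_t)\ne 0\text{ for some }t\}$ (whose compactness uses the wave-front property of \cite{EM}), and in the non-compact case the paper organizes the argument as a three-regime thick/intermediate/very-thin decomposition of $\mathcal W$, with the comparison of $(a_t)_*\mu_H^{\PS}$ and $(a_t)_*\mu_H^{\Leb}$ via transversal intersections handling the intermediate range, rather than a straight compact-exhaustion-plus-remainder scheme.
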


\begin{thm}\label{eq2} \cite{MO2} Let
$\G$ be a geometrically finite Zariski dense subgroup of $G$ with a spectral gap (e.g., $\delta>n-2$). Suppose  $|\mu_H^{\PS}|<\infty$. 
Then there exist $\eta_0>0$ and $\ell\in \N$ such that
for any $\Psi\in C_c^\infty(\G\ba G)$, as $t \to \infty$, 
$$ e^{(n-1-\delta)t} \int_{ h\in \G\ba \G H}\Psi(ha_t) d\mu^{\Leb}_H (h) =
\frac{|\mu_H^{\PS}|}{|m^{\BMS}|} m^{\BR}(\Psi)+ O(\mathcal S_\ell(\Psi)
\cdot e^{-\eta_0 t})  .$$
\end{thm}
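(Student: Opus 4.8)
\emph{Proof plan (following \cite{MO2}).} The approach is to turn the $H$-orbit integral into a matrix coefficient of the quasi-regular representation, apply the effective mixing Theorem~\ref{harmixing}, and identify the resulting main term with $|\mu_H^{\PS}|$ through the structure of $m^{\BR}_*$; the genuinely new ingredient, exactly as in the non-effective Theorem~\ref{eq}, is the (now quantitative) transversal analysis needed when $\mu_H^{\PS}$ fails to be compactly supported. \textbf{Step 1 (thickening).} Fix the generalized Cartan decomposition $G=HAK$ and, for small $\e>0$, a smooth probability measure $\nu_\e$ supported on a symmetric $\e$-ball $W_\e$ transversal to $H$ at $e$ in the complementary directions (a box in $AMN^-$ when $H=N^+$, and in the subgroup whose Lie algebra complements $\mathfrak h$ when $H$ is symmetric). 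These complementary directions are contracted under $w\mapsto a_{-t}wa_t$ as $t\to+\infty$, so $\Psi(hwa_t)$ is uniformly close to $\Psi(ha_t)$ for $w\in W_\e$. Spreading $\mu_H^{\Leb}$ transversally by $\nu_\e$ yields a locally integrable $\phi_\e$ on $\G\ba G$, supported in an $\e$-neighborhood of $\G\ba\G H$, with $\phi_\e\,dm^{\Haar}$ the pushforward of $d\mu_H^{\Leb}\otimes d\nu_\e$ under $(h,w)\mapsto hw$; since this change of variables is independent of $t$,
\[
\int_{\G\ba\G H}\Psi(ha_t)\,d\mu_H^{\Leb}(h)=\la a_t.\Psi,\phi_\e\ra_{L^2(\G\ba G)}+E_1,
\]
where $E_1$ records the transversal replacement above.

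\textbf{Step 2 (matrix coefficient and main term).} Since $\mu_H^{\Leb}$ may be infinite, $\phi_\e$ need not be compactly supported; using the hypothesis $|\mu_H^{\PS}|<\infty$, choose $0\le\phi_\e^\sharp\le\phi_\e$ compactly supported and retaining all but an $O(\e)$-fraction of the $m^{\BR}_*$-mass of $\phi_\e$ (letting the truncation radius grow with $1/\e$). Theorem~\ref{harmixing} with $\Psi_1=\Psi$, $\Psi_2=\phi_\e^\sharp$ gives
\[
e^{(n-1-\delta)t}\la a_t.\Psi,\phi_\e^\sharp\ra=\frac{m^{\BR}(\Psi)\,m^{\BR}_*(\phi_\e^\sharp)}{|m^{\BMS}|}+O\bigl(\mathcal{S}_\ell(\Psi)\,\mathcal{S}_\ell(\phi_\e^\sharp)\,e^{-\eta_0 t}\bigr).
\]
Because $m^{\BR}_*=m^{\nu,m}$ is carried by $\{g:g^+\in\Lambda(\G)\}$ and carries the PS-weight $e^{\delta\beta_{g^+}(o,g)}\,d\nu_o(g^+)$ in the $g^+$-direction, comparison with the defining formula for the skinning density $\tilde\mu_H^{\PS}$ shows that the $H$-directions of $\phi_\e$ reproduce $\mu_H^{\PS}$, while the normalized transversal — along which the smooth Radon--Nikodym factor between $m^{\Haar}$ and $m^{\BR}_*$ equals $1$ on $\G\ba\G H$ — contributes $1+O(\e)$; hence $m^{\BR}_*(\phi_\e^\sharp)=|\mu_H^{\PS}|+O(\e)$, which is finite even though $\phi_\e$ itself may have infinite $m^{\Haar}$-mass.

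\textbf{Step 3 (error control — the main obstacle — and balancing).} It remains to bound $E_1$ and the tail $\la a_t.\Psi,\phi_\e-\phi_\e^\sharp\ra$, \emph{uniformly in $t$}, by $O(\e^p e^{-(n-1-\delta)t})$ for some $p>0$. When $\mu_H^{\PS}$ is compactly supported this is the usual thickening: the relevant part of $\G\ba\G H$ is precompact, the intersections of $\G\ba\G Ha_t$ with $\e$-transversals are uniformly controlled, and a direct $L^2$ bound (or Theorem~\ref{eq}) disposes of the negligible tail. When $\mu_H^{\PS}$ is merely finite, this step requires the transversal-intersection analysis of \cite{OS} in quantitative form: one must show that the portion of $\G\ba\G H$ carrying small skinning mass — which runs out into the cusps and towards endpoints outside $\Lambda(\G)$ — returns to $\supp\Psi$ under the flow $a_t$ with correspondingly small weight, with a power saving in $\e$ that is uniform in $t$; here $|\mu_H^{\PS}|<\infty$ is precisely what makes the tail summable, and this is the hard part of the argument. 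One also needs $\mathcal{S}_\ell(\phi_\e^\sharp)=O(\e^{-\ell'})$, keeping the contribution of the truncation near the cusps under control. Finally, taking $\e=e^{-\kappa t}$ for a small $\kappa>0$ and balancing the three errors — $O(\e^p e^{-(n-1-\delta)t})$ from thickening and tail, $O(\e^{-\ell'}e^{-(n-1-\delta)t}e^{-\eta_0 t})$ from mixing, and $O(\e\,e^{-(n-1-\delta)t})$ from the main term — produces the asserted asymptotic with a (possibly smaller) exponent $\eta_0>0$ depending only on the spectral gap data of $\G$.
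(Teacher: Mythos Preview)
Your overall architecture is right and matches the paper's: reduce the $H$-orbit integral to a matrix coefficient via thickening, feed it into Theorem~\ref{harmixing}, identify the main term with $|\mu_H^{\PS}|$ through $m^{\BR}_*$, and handle the non-compact case by a quantitative version of the transversal-intersection argument from \cite{OS}. You also correctly flag Step~3 as the genuine difficulty.

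There is, however, a structural difference from the paper's proof that creates a real technical gap in your write-up. You thicken \emph{all} of $\G\ba\G H$ to produce $\phi_\e$ and only afterwards truncate to $\phi_\e^\sharp$. But when $\mu_H^{\Leb}$ is infinite, $\phi_\e$ is not in $L^2(\G\ba G)$ (nor in $C_c^\infty$), so the pairing $\la a_t.\Psi,\phi_\e\ra_{L^2(\G\ba G)}$ in your Step~1 is not defined, and the ``tail'' $\la a_t.\Psi,\phi_\e-\phi_\e^\sharp\ra$ in Step~3 is an integral against a function of possibly infinite Haar mass whose smallness you have only arranged in the $m^{\BR}_*$-sense. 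The paper avoids this by reversing the order: it first isolates the \emph{non-wandering set} $\mathcal W=\{h\in\G\ba\G H: ha_t\in\supp\Psi\text{ for some }t\}$, observes (via the wave-front property of \cite{EM}) that when $\mu_H^{\PS}$ is compactly supported $\mathcal W$ is contained in a fixed compact $\mathcal O_H$ independent of $t$, and only then thickens $\mathcal O_H$ to a genuine $C_c^\infty$ function $\rho_{\mathcal O_H,\e}$ to which Theorem~\ref{harmixing} applies directly. In the merely-finite case the paper performs a thick--thin decomposition of $\mathcal W$ itself: the thick part and the very thin part are handled by the thickening argument, and the intermediate range is treated by comparing the pushforwards $(a_t)_*\mu_H^{\PS}$ and $(a_t)_*\mu_H^{\Leb}$ via their transversal intersections with the weak-stable horospherical foliation --- the point being that these intersections are governed by the topology of $\G\ba\G Ha_t$, independent of which measure one puts on $\G\ba\G H$. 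Your Step~3 gestures at this, but packaging it as ``control the tail $\phi_\e-\phi_\e^\sharp$'' obscures that the correct object to decompose is $\mathcal W$, not the thickened density.

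A smaller point: your claim that the complementary directions are ``contracted under $w\mapsto a_{-t}wa_t$'' is only true for the $N^-$-part; the $AM$-directions commute with $A$ and are not contracted. What makes the approximation $\Psi(hwa_t)\approx\Psi(ha_t)$ work uniformly over a compact piece of $H$ is precisely the wave-front property, which you should invoke explicitly.
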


 In the case when $\mu_H^{\PS}$ is compactly supported, we can show that
 there exists a compact subset $\mathcal O_H$ (depending on $\Psi$) of $\G\ba \G H$ such that for
any $t\in \br$, $\Psi(ha_t)=0$ for all $h\notin \mathcal O_H$. Therefore
 $\int\Psi(ha_t) d\mu^{\Leb}_H= \int_{\mathcal O_H}\Psi(ha_t) d\mu^{\Leb}_H$.
Now the assumption on $H$ being either symmetric or horospherical
ensures the wave front property of \cite{EM} which can be used to establish, as $t\to \infty$,
\be \label{tt} \int_{\mathcal O_H}\Psi(ha_t) d\mu^{\Leb}_H 
\approx \la a_t \Psi, \rho_{\mathcal O_H, \e}\ra_{L^2(\G\ba G)}\ee
where
$\rho_{\mathcal O_H, \e} \in C_c^\infty(\G\ba G)$ is an $\e$-approximation of $\mathcal O_H$.
Therefore the estimates on the matrix coefficients
in Theorems \ref{lo} and \ref{harmixing} can be used to establish Theorems \ref{eq} and \ref{eq2}.

The case when $\mu_H^{\PS}$ is not compactly supported turns out to be much more
intricate, the main reason being that
 we are taking the integral with respect to $\mu^{\Leb}_H$ 
as well as multiplying the weight factor $e^{(n-1-\delta)t}$ in the left hand side of Theorem \ref{eq},
whereas the finiteness assumption is made on the skinning measure $\mu^{\PS}_H$. In this case,
we first develop a version of thick-thin decomposition of the non-wandering set, that is,
$\mathcal W:=\{h\in \G\ba \G H: ha_t\in \text{supp}(\Psi)\}$, which resembles that of the support of $\mu_H^{\PS}$.
This together with \eqref{tt} takes care of the integral $\int \Psi_{ha_t}d\mu_H^{\Leb}(h) $
over a thick part as well as a very thin part of $\mathcal W$. What is left
is the integration over an intermediate range, which is investigated by comparing the two measures
$(a_t)_*\mu^{\PS}_H$ and $(a_t)_*\mu^{\Leb}_H$ via the transversal intersections
of the orbits $\G\ba \G Ha_t$ with the weak-stable horospherical foliations .
 A key reason
 that this approach works  is that 
these transversal intersections are governed by the topological properties
of the orbit $\G\ba \G Ha_t$, independent of the measures put on $\G\ba \G H$. 

In the special case of $n=2,3$ and $H$ horospherical,
Theorem \ref{eq2} was proved in \cite{LOA} by a different method.

\section{Distribution of $\G$ orbits in $H\ba G$ and Affine sieve}
\subsection{Distribution of $\G$ orbits in $H\ba G$}\label{oca}
Let $H$ be as in the previous section (i.e., symmetric or horospherical subgroup) and let
 $\{B_T\}$  be a family of compact subsets in $H\ba G$ which is getting larger and larger as $T\to \infty$.
 We assume that $[e]\G$ is discrete in $H\ba G$.
The study of the asymptotic of $\#([e]\G\cap B_T)$ can be now approached by
Theorems \ref{eq} and \ref{eq2} 
via the following counting function on $\G\ba G$:
$$F_T(g):=\sum_{\gamma\in H\cap \G\ba \G}\chi_{B_T}(\gamma g) $$
as $F_T(e)=\#([e]\G\cap B_T)$.
Depending on the subgroup $H$, we have $G=HA^+K$ or $G=HA^+K \cup HA^-K$.
For the sake of simplicity, we will consider the sets $B_T$ contained in $HA^+K$.
We have for $g=ha_tk$, $dm^{\Haar}(g)=\rho(t) d\mu^{\Leb}(h) dt dk$ where
$\rho(t)=e^{(n-1)t}(1+O(e^{-\beta t}))$ for some $\beta>0$.
Let $\mu$ be a $G$-invariant measure on $H\ba G$ normalized so that
$dm^{\Haar}= d\mu^{\Leb}_H\otimes d\mu $ locally.
Then, under the assumption
that $\G$ has a spectral gap and $|\mu_H^{\PS}|<\infty$,
we deduce from Theorem \ref{eq2}:
\begin{align}\label{ft2}
      &\la F_T, \Psi\ra =\int_{g\in B_T} \int_{h\in \G\ba \G H} \Psi (h g) d\mu^{\Leb}_H(h) d\mu(g) \notag \\
&=\int_{[e]a_tk\in B_T } \int_{h\in \G\ba \G H} \Psi^k (ha_t)  d\mu^{\Leb}_H(h) \rho(t) dtdk \notag \\
&= \int_{[e]a_tk\in B_T }
 \left(\tfrac{|\mu_H^{\PS}|}{|m^{\BMS}|} m^{\BR}(\Psi^k) e^{\delta t}  + O(e^{(\delta -\eta_0) t}) \right) dtdk \end{align}
for some $\eta_0>0$.

Therefore, similarly to the discussion in section \ref{cg},
if $F_T(e)$ can be {\it effectively approximated} by
$\la F_T, \Psi_\e \ra $ for an approximation of identity $\Psi_\e$ in $\G\ba G$,
a condition which depends on the regularity of the boundary of $B_T$ relative to the PS density,
then $F_T(e)$ can be computed by evaluating the integral \eqref{ft2} 
for $\Psi=\Psi_\e$ and by taking $\e$ a suitable power of $e^{-t}$.

For $H$ horospherical or symmetric,
 we have $G=HA^+K$ or $G=HA^+K\cup HA^-K$ (as a disjoint union except for the identity element)
where $A^\pm=\{a_{\pm t}: t\ge 0\}$.
\begin{dfn}\rm
Define a Borel measure $\mathcal M_{H\ba G}$ on $H\ba G$ as follows: for $\psi\in C_c(H\ba G)$,
\begin{equation*}\label{mde}
\mathcal M_{H\ba G} (\psi)=\begin{cases}\tfrac{ |\mu_{H}^{\PS}|}{|m^{\BMS}|} \int_{a_tk\in A^+K} \psi([e]a_tk)
  e^{\delta t} dt d\nu_o(k^{-1})&\text{if $G=HA^+K$}
\\ \sum \tfrac{ |\mu_{H,\pm}^{\PS}|}{|m^{\BMS}|}\int_{a_{\pm t}k\in A^\pm K}  \psi([e]a_{\pm t}k) 
e^{\delta t} dt d\nu_o(k^{-1})
 &\text{otherwise}, \end{cases}\end{equation*}
where $o\in \bH^n$ is the point fixed by $K$, $\nu_o$ is the right $M$-invariant measure on $K$, which
projects to the PS-measure $\nu_o$ on $K/M=\partial(\bH^n)$ and $\mu_{H,-}^{\PS}$ is the skinning measure on $\G\cap H\ba H$
in the negative direction.
\end{dfn}

\begin{dfn}\label{adm2} \rm 
  For a family $\{\B_T\subset H\ba G\}$ of compact subsets
 with $\mathcal M_{H\ba G}(\B_T)$ tending to infinity as $T\to \infty$, we say that $\{\B_T\}$
is {\it effectively well-rounded with respect to $\G$}
if there exists $p>0$ such that 
for all small $\e>0$ and $T\gg 1$:
$$\mathcal M_{H\ba G} (\B_{T,\e}^+ - \B_{T,\e}^-) = O(\e^p \cdot \mathcal M_{H\ba G} (\B_T)) $$  
where  $\B_{T,\e}^+=\B_{T}G_\e$ and $\B_{T,\e}^-= \cap_{g\in G_\e}\B_T g$.
\end{dfn}

In the next two theorems \ref{mc} and \ref{sec2},
we assume that
$\G'<\G$ is a subgroup of finite index with $H\cap \G=H\cap \G'$ and that both $\G$ and $\G'$ have
spectral gaps.

\begin{thm}\label{mc} \cite{MO2}
When $H$ is symmetric, we assume that $|\mu_H^{\PS}|<\infty$.
Suppose that $\{\B_T\}$ is effectively well-rounded with respect to $\G$. Then  
 for any $\gamma\in \G$, 
  there exists $\eta_0>0$ (depending only on a uniform spectral gap of $\G$ and $\G'$) such that 
$$\# ([e]\G'\gamma\cap \B_T )= \tfrac{ 1}{[\G:\G'] }  
\mathcal M_{H\ba G}(\B_T)  + O( \mathcal M_{H\ba G}(\B_T)^{1-\eta_0})$$
with the implied constant independent of $\G'$ and $\gamma\in \G$.
\end{thm}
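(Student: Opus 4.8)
\emph{Strategy.} The plan is to run the Eskin--McMullen thickening scheme recorded in \eqref{ft2} on the finite cover $\G'\ba G\to\G\ba G$, feeding in Theorem \ref{eq2} for the effective leading term and error and Definition \ref{adm2} for the geometry of $\B_T$. The two points that are not routine are the emergence of the index factor $[\G:\G']^{-1}$ and keeping every constant uniform in $\G'$ and $\gamma$.

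\emph{Reduction and the index factor.} Since $\B_T\subset HA^+K$ only the first branch of $\mathcal M_{H\ba G}$ is relevant. For $\B\subset H\ba G$ put $F^{\G'}_{\B}(g):=\sum_{\delta\in (H\cap\G')\ba\G'}\chi_{\B}(\delta g)$; this is well defined on $\G'\ba G$ precisely because $H\cap\G'=H\cap\G$, and $F^{\G'}_{\B_T}(\gamma)=\#([e]\G'\gamma\cap\B_T)$. The same hypothesis gives $\G'\ba\G'H=(H\cap\G)\ba H$, so the orbit and the measures $\mu^{\PS}_H,\mu^{\Leb}_H$ on it are literally unchanged, while $|m^{\BMS}_{\G'}|=[\G:\G']\,|m^{\BMS}_\G|$ and $|\mu^{\PS}_{H,\G'}|=|\mu^{\PS}_{H,\G}|$ once the Patterson--Sullivan densities of $\G$ and $\G'$ (same limit set, same $\delta$) are normalized consistently. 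Hence, as measures on $H\ba G$,
$$\mathcal M_{H\ba G}^{\G'}=\tfrac{1}{[\G:\G']}\,\mathcal M_{H\ba G}^{\G},$$
which is the sole origin of the index factor; it remains to prove $F^{\G'}_{\B_T}(\gamma)=\mathcal M^{\G'}_{H\ba G}(\B_T)+O(\mathcal M^{\G'}_{H\ba G}(\B_T)^{1-\eta_0})$ with $\eta_0$ and the implied constant depending only on the spectral gap data of $\G,\G'$.

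\emph{Smoothing and evaluation via Theorem \ref{eq2}.} Fix small $\e>0$ and a nonnegative $\psi_{\e,\gamma}\in C_c^\infty(\G'\ba G)$ supported in a metric $\e$--ball about the point $\G'\gamma$, with $\int\psi_{\e,\gamma}\,dm^{\Haar}=1$ and $\mathcal S_\ell(\psi_{\e,\gamma})=O(\e^{-\kappa})$, $\kappa=\kappa(\ell,\dim G)$, the implied constant independent of $\gamma$ since this is merely a bump at a point. With $\B^\pm_{T,\e}$ as in Definition \ref{adm2}, the standard sandwich
$$\langle F^{\G'}_{\B^-_{T,\e}},\psi_{\e,\gamma}\rangle\ \le\ F^{\G'}_{\B_T}(\gamma)\ \le\ \langle F^{\G'}_{\B^+_{T,\e}},\psi_{\e,\gamma}\rangle$$
holds, and \eqref{ft2} applied on $\G'\ba G$ (Theorem \ref{eq2}; here $|\mu^{\PS}_H|<\infty$ is automatic for horospherical $H$ and assumed for symmetric $H$) gives
$$\langle F^{\G'}_{\B^\pm_{T,\e}},\psi_{\e,\gamma}\rangle=\tfrac{|\mu^{\PS}_{H,\G}|}{[\G:\G']\,|m^{\BMS}_\G|}\int_{[e]a_tk\in\B^\pm_{T,\e}}m^{\BR}_{\G'}\!\big(\psi_{\e,\gamma}^{k}\big)\,e^{\delta t}\,dt\,dk\ +\ O\!\big(\mathcal S_\ell(\psi_{\e,\gamma})\,\mathcal M^{\G}_{H\ba G}(\B_T)^{1-\eta_1}/[\G:\G']\big)$$
for some $\eta_1>0$ depending only on the spectral gap data and $\delta$ (the inverse index in the error reflects that the implied constant of Theorem \ref{eq2} carries the factor $|\mu^{\PS}_H|/|m^{\BMS}|$). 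A direct comparison of $m^{\BR}_{\G'}(\psi_{\e,\gamma}^{k})\,dk$ with $d\nu_o(k^{-1})$, using the local product structure of $m^{\BR}$ along the weak--stable horospheres (as in \cite{OS}), identifies the main integral with $[\G:\G']^{-1}\mathcal M^{\G}_{H\ba G}(\B^\pm_{T,\e})$ up to a factor $1+O(\e^{p'})$; this comparison does not see $\gamma$, because $\G'\gamma k^{-1}$ and $[k^{-1}]_\G$ have the same local $m^{\BR}$--picture under the covering.

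\emph{Conclusion and the main obstacle.} Feeding in the well-roundedness estimate $\mathcal M_{H\ba G}(\B^\pm_{T,\e})=\mathcal M_{H\ba G}(\B_T)(1+O(\e^p))$ and then balancing $\e=\mathcal M^{\G}_{H\ba G}(\B_T)^{-c}$ for a small $c>0$ produces the asserted bound with $\eta_0=\eta_0(p,p',\kappa,\eta_1)>0$, depending only on the uniform spectral gap of $\G,\G'$ and the fixed exponent $p$, and an implied constant independent of $\G'$ and $\gamma$. Two steps carry the real difficulty. First, one must verify, by revisiting the proofs of Theorems \ref{harmixing}--\ref{eq2}, that $\eta_0,\ell$ and all implied constants there are controlled \emph{solely} by $(s_0(\G'),n_0(\G'))$ and scale correctly in the index, so that a uniform spectral gap genuinely yields uniformity in $\G'$. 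Second, and more seriously, when $\mu^{\PS}_H$ is not compactly supported --- $H$ symmetric of positive parabolic corank --- Theorem \ref{eq2} itself rests on the thick--thin decomposition and the transversal--intersection analysis of \cite{OS}, and in that regime the comparison $m^{\BR}_{\G'}(\psi_{\e,\gamma}^{k})\,dk\to d\nu_o(k^{-1})$ with an explicit rate $p'$ must be reconciled with the escape of $m^{\BR}$--mass into the cusps; this is where the bulk of the work lies.
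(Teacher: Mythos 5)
Your proposal follows essentially the same route the paper outlines for this theorem: unfold the count into the function $F_T$ on $\G'\ba G$, sandwich it between $\langle F_{\B^\pm_{T,\e}},\Psi_\e\rangle$ for an approximation of the identity, evaluate via \eqref{ft2} and Theorem \ref{eq2}, absorb boundary effects through the effective well-roundedness, and balance $\e$ against the error from the spectral gap; your accounting for the index factor via $|m^{\BMS}_{\G'}|=[\G:\G']\,|m^{\BMS}_{\G}|$ together with $H\cap\G'=H\cap\G$ is exactly the mechanism used in \cite{MO2}. The only caveats are cosmetic: the claimed $1/[\G:\G']$ scaling of the error constant is unnecessary (the stated error has no index factor), and the rate $O(\e^{p'})$ in the $m^{\BR}(\psi_\e^k)\,dk\leftrightarrow d\nu_o(k^{-1})$ comparison is really supplied by the well-roundedness hypothesis rather than by the local product structure alone.
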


For a given family $\{B_T\}$, understanding its effective well-roundedness 
can be the hardest part of the work in general.
However verifying the family of norm balls is
effectively well-rounded is manageable (see \cite[sec.7]{MO2}).
Consider an example of the family  $\{B_T=[e] A_T \Omega\}$ where $\Omega\subset K$ and $A_T=\{a_t: 0\le t\le T\}$.
In this case, it is rather simple to formulate the effective well-rounded condition:
 there exists $\beta'>0$ such that the PS-measure of the $\e$-neighborhood of $\partial(\Omega^{-1}M/M)$
is at most  of order $\e^{\beta'}$ for all small $\e>0$. As mentioned before,
 this holds when the boundary of $\Omega^{-1}M/M$ does not intersect the limit set $\Lambda(\G)$ (see \cite[sec. 7]{MO2} for a more general condition).

Then, setting
$\Xi(\G, \Omega):=\tfrac{|\mu_H^{\PS}|\cdot \nu_o(\Omega^{-1})}{\delta \cdot |m^{\BMS}|}$,
 one can deduce  from \eqref{ft2},
\begin{thm} \cite{MO2}  \label{sec2} Under the same assumption on $\mu_H^{\PS}$ and $\G'$,
 there  exists $\eta_0>0$ (depending only on a uniform spectral gap of $\G$ and $\G'$) such that for any $\gamma\in \G$,
\begin{equation*} \#([e]\G' \gamma \cap [e] A_T\Omega) =\frac{\Xi(\G,\Omega)}{[\G:\G']}{e^{\delta T}}
  +O(e^{(\delta -\eta)T}) \end{equation*}
with implied constant independent of $\G'$ and $\gamma$. \end{thm}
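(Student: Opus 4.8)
The plan is to deduce Theorem~\ref{sec2} from \eqref{ft2} by specializing the family $\{\B_T\}$ to $\{[e]A_T\Omega\}$ and carrying out the two approximation steps outlined in Section~\ref{oca}. First I would verify that, under the stated hypothesis on $\Omega$ (the PS-measure of $\e$-neighborhoods of $\partial(\Omega^{-1}M/M)$ is $O(\e^{\beta'})$), the family $\{[e]A_T\Omega\}$ is effectively well-rounded with respect to $\G$ in the sense of Definition~\ref{adm2}: expanding $\B_{T,\e}^{\pm}$ in the coordinates $[e]a_tk$, the discrepancy $\B_{T,\e}^+ - \B_{T,\e}^-$ is contained in the union of a thin ``end effect'' near $t=T$ (contributing a multiplicative $O(\e)$ because $dt$ and the $e^{\delta t}$-weight vary by that order over a length-$\e$ window) and a ``side effect'' near $\partial(\Omega^{-1})$, whose $\mathcal{M}_{H\ba G}$-mass is controlled by the $\nu_o$-mass of the $\e$-neighborhood of $\partial(\Omega^{-1}M/M)$, i.e.\ $O(\e^{\beta'})$. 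Taking $p=\min(1,\beta')$ gives the required bound, so Theorem~\ref{mc} applies to this family.

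Next I would read off the main term from $\mathcal{M}_{H\ba G}$. Plugging $\psi=\chi_{[e]A_T\Omega}$ into Definition of $\mathcal{M}_{H\ba G}$ (the $G=HA^+K$ case) gives
\begin{equation*}
\mathcal{M}_{H\ba G}([e]A_T\Omega) = \frac{|\mu_H^{\PS}|}{|m^{\BMS}|}\int_0^T e^{\delta t}\,dt \cdot \nu_o(\Omega^{-1}) = \frac{|\mu_H^{\PS}|\,\nu_o(\Omega^{-1})}{\delta\,|m^{\BMS}|}\,e^{\delta T} - \frac{|\mu_H^{\PS}|\,\nu_o(\Omega^{-1})}{\delta\,|m^{\BMS}|},
\end{equation*}
whose leading term is exactly $\Xi(\G,\Omega)\,e^{\delta T}$, with the lower-order constant absorbed into the error. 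Then Theorem~\ref{mc}, applied with $\B_T=[e]A_T\Omega$ and the index-$[\G:\G']$ normalization, yields
\begin{equation*}
\#([e]\G'\gamma\cap[e]A_T\Omega) = \frac{1}{[\G:\G']}\,\mathcal{M}_{H\ba G}([e]A_T\Omega) + O\!\left(\mathcal{M}_{H\ba G}([e]A_T\Omega)^{1-\eta_0}\right) = \frac{\Xi(\G,\Omega)}{[\G:\G']}\,e^{\delta T} + O(e^{(\delta-\eta)T}),
\end{equation*}
where $\eta = \eta_0\delta > 0$ depends only on a uniform spectral gap of $\G$ and $\G'$, and the implied constant is independent of $\G'$ and $\gamma$ because the one in Theorem~\ref{mc} is. This is the claimed statement.

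Alternatively, and more self-containedly, one can bypass Theorem~\ref{mc} and argue directly from \eqref{ft2} as sketched in Section~\ref{oca}: choose an $\e$-approximation of identity $\Psi_\e\in C_c^\infty(\G'\ba G)$, sandwich $F_T(e)=\#([e]\G'\gamma\cap B_T)$ between $\langle F_{T,\e}^-,\Psi_\e\rangle$ and $\langle F_{T,\e}^+,\Psi_\e\rangle$ using the effective well-roundedness of $\{[e]A_T\Omega\}$, evaluate the resulting integral \eqref{ft2} (the $k$-integral produces $m^{\BR}(\Psi_\e^k)$ integrated against $d\nu_o(k^{-1})$ over $\Omega$, which tends to $\nu_o(\Omega^{-1})\cdot m^{\BR}(\Psi_\e)/|m^{\BMS}|\cdot|\mu_H^{\PS}|$ type quantities; the $t$-integral gives $e^{\delta T}/\delta$ to leading order), use $m^{\BR}(\Psi_\e)\to 1$ as $\e\to 0$ together with the error control $O(\e^p e^{\delta T}) + O(\e^{-\ell}e^{(\delta-\eta_0)T})$ coming from well-roundedness and the Sobolev-norm blow-up of $\Psi_\e$, and finally optimize by taking $\e = e^{-ct}$ for a suitable small $c>0$. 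The main obstacle in either route is the effective well-roundedness verification --- specifically, translating the geometric hypothesis on $\partial\Omega$ versus $\Lambda(\G)$ into the quantitative PS-measure bound on $\e$-neighborhoods --- but this is exactly the content of \cite[sec.~7]{MO2} and can be cited; the rest is bookkeeping with the explicit formula for $\mathcal{M}_{H\ba G}$.
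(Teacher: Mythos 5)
Your proposal follows exactly the route the paper indicates: it deduces Theorem~\ref{sec2} from \eqref{ft2} and Theorem~\ref{mc} by checking effective well-roundedness of $\{[e]A_T\Omega\}$ (deferring the PS-measure boundary estimate to \cite[sec.~7]{MO2}, as the paper itself does) and computing $\mathcal M_{H\ba G}([e]A_T\Omega)=\Xi(\G,\Omega)(e^{\delta T}-1)\cdot\delta|m^{\BMS}|/|\mu_H^{\PS}|\cdot|\mu_H^{\PS}|/(\delta|m^{\BMS}|)$ correctly, with $\eta=\delta\eta_0$. This matches the paper's sketch, so no further comment is needed.
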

We note that in view of Theorem \ref{ft}, the non-effective version of this theorem holds
for any Zariski dense  $\G$ with $|m^{\BMS}|<\infty $ and $|\mu^{\PS}_{H}|<\infty$, as proved
in \cite{OS}.

\subsection{Affine sieve}\label{affine2}
For applications to an affine sieve, we consider the case when the homogeneous space
 $H\ba G$ is defined over $\z$. More precisely, we assume that
 $G$ is defined over $\z$ and acts linearly and irreducibly
on a finite dimensional vector space $W$ defined over $\z$ in such a way that
$G(\z) $ preserves $W(\z)$.
Let $w_0\in W(\z)$ be a non-zero vector such the stabilizer of $w_0$ is a symmetric subgroup
or the stabilizer of the line $\br w_0$ is a parabolic subgroup.
We set $V:=w_0G$. Let $\G$ be a geometrically finite Zariski dense subgroup of $G$ with a spectral gap,
which is contained in $G(\z)$.

Let $F$ be an integer-valued polynomial on the orbit $w_0\G$.
 Salehi-Golsefidy  and Sarnak \cite{SS}, generalizing \cite{BGS}, showed that for some $R>1$,
the set of ${\bf x}\in w_0\G$ such that $F({\bf x}) \text{ has at most $R$ prime factors}$
is Zariski dense in $V$.

For a square-free integer $d$, let $\G_d<\G$ be a subgroup which
 contains $\{\gamma\in \G: \gamma\equiv e \text{ mod }d\}$ and satisfies
 $\op{Stab}_{\G_d}(w_0)=\op{Stab}_\G(w_0)$. For instance, we can set $\G_d=\{\gamma\in \G: w_0\gamma \equiv w_0 \text{ mod }d \}$.
 
 We say the family $\{\G_d\}$ has a uniform spectral gap if $\sup_d s_0(\G_d)<\delta$ and $\sup_d n_0(\G_d)<\infty$
 (see Def. \ref{sngg} for notation).

 Salehi-Golsefidy and Varju \cite{SV}, generalizing \cite{BGS}, 
showed that the family of Cayley graphs of $\G/\G_d$'s (with respect to the
projections of a fixed symmetric generating set of $\G$) forms expanders
as $d$ runs through square-free integers with large prime factors.
 If $\delta>(n-1)/2$, the transfer property from the combinatorial spectral gap to the archimedean one established in \cite{BGS2}
(see also \cite{Kim}) implies that
 the family $\{\G_d:\text{ $d$ square-free} \}$ has a uniform {\it spherical} spectral gap.
 Together with the classification of $\hat G$, it follows that $\{\G_d\}$ admits a uniform spectral gap
if $\delta>(n-1)/2$ for $n=2,3$ or if $\delta>n-2$ for $n\ge 4$.
 
 For the following discussion, we assume that
 there is a finite set of primes $S$ such that
 the family $\G_d$ with $d$ square-free with no prime factors in $S$
 admits a uniform spectral gap.
  Then we can apply Theorem \ref{sec2} to $\G_d$'s. By a recent development on the affine linear sieve on homogeneous spaces
(\cite{BGS}, \cite{NS}), we are able to deduce: 
let $F$ be an integer-valued polynomial on $w_0\G$.
Letting $F=F_1 F_2 \cdots F_r$ be a factorization into irreducible
polynomials, assume that all $F_j$'s are integral on $w_0\G$ and distinct from each other.  
Let $\lambda$ be the log of the largest eigenvalue of $a_1$ on the $\br$-span of $w_0G$.

\begin{thm}\label{affine} \cite{MO2}
For any norm $\|\cdot \|$ on $V$,
\begin{enumerate}
 \item 
$\{{\bf x}\in w_0\G: \|{\bf x}\|<T,\; F_j({\bf x}) \text{ is prime for $j=1, \cdots, r$}\}\ll
\frac{T^{\delta/\lambda}}{(\log T) ^r} ;$
\item
there exists $R=R(F, w_0\G) \ge 1$ such that 
$$\{{\bf x}\in w_0\G: \|{\bf x}\|<T, \;F({\bf x}) \text{ has at most $R$ prime factors}\}\gg
\frac{T^{\delta/\lambda}}{(\log T) ^r} .$$
\end{enumerate}
\end{thm}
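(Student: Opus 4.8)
The plan is to combine the effective counting statement of Theorem~\ref{sec2} (applied uniformly across the congruence family $\{\G_d\}$) with the combinatorial sieve, in the manner of \cite{BGS}, \cite{NS} and \cite{SS}. First I would translate the norm-ordering $\|{\bf x}\|<T$ on $w_0\G$ into the $A^+K$-ordering appearing in Theorem~\ref{sec2}: writing $g=ha_tk$ with $H=\op{Stab}_G(w_0)$ (or the relevant parabolic), the norm $\|w_0 g\|$ grows like $e^{\lambda t}$ up to a factor depending on $k\in K$, so that the region $\{\|w_0 g\|<T\}$ in $H\ba G$ is, after a partition of $K$ into small pieces $\Omega_j$, a finite union of sets of the shape $[e]A_{T_j}\Omega_j$ with $T_j \approx (\log T)/\lambda + O(1)$. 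One needs the boundaries of these pieces to be effectively well-rounded with respect to $\G$ in the sense of Definition~\ref{adm2}; this is where the hypothesis $\delta>(n-1)/2$ and the regularity of the norm ball enter, and it is handled as in \cite[sect.~7]{MO2}. The upshot is an asymptotic
$$\#\{{\bf x}\in w_0\G_d:\ \|{\bf x}\|<T\} = \frac{c_\infty}{[\G:\G_d]}\, T^{\delta/\lambda} + O\!\left(T^{(\delta-\eta_0)/\lambda}\right),$$
with $c_\infty$ independent of $d$ and $\eta_0>0$ uniform over the family (here the uniform spectral gap furnished by \cite{SV} and the transfer principle of \cite{BGS2} is essential).

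Next I would set up the sieve. For a prime $q\notin S$ one has the reduction map $w_0\G \to w_0\G \bmod q = (w_0\G)/(w_0\G_q)$, and the sifted sets are indexed by the conditions $q\mid F_j({\bf x})$. The local densities $\beta(q) = \#\{\bar{\bf x}\in w_0\G/w_0\G_q:\ F(\bar{\bf x})\equiv 0\ (q)\}/\,[\G:\G_q]$ are controlled by the fact that $V=w_0G$ is an affine variety on which $G(\z)$ acts with Zariski-dense orbit, so $F$ does not vanish identically modulo $q$; standard estimates (Lang--Weil, or the elementary bounds used in \cite{BGS}) give $\beta(q) = O(q^{-1})$ for each irreducible factor and $\beta(q)\le r\,q^{-1}+O(q^{-2})$ in aggregate, with the expected product $\prod_{q<z}(1-\beta(q))\asymp (\log z)^{-r}$. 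The level-of-distribution input is precisely the uniform error term $O(T^{(\delta-\eta_0)/\lambda}/[\G:\G_d])$ above, valid for $d$ square-free up to $T^{\alpha}$ for some fixed $\alpha>0$; feeding this into Brun's sieve (for the upper bound in (1), taking $z$ a small power of $T$ and noting that requiring each $F_j$ prime forces all prime factors of $F({\bf x})$ to be large) and into the combinatorial lower-bound sieve of \cite{NS} (for (2), with $R$ depending on $\alpha$, $\deg F$ and $r$) yields the two displayed bounds, since the main term $c_\infty T^{\delta/\lambda}$ times $(\log T)^{-r}$ matches the claimed order $T^{\delta/\lambda}(\log T)^{-r}$.

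The main obstacle, and the step I would spend the most care on, is establishing the uniform equidistribution / level of distribution: one must know that Theorem~\ref{sec2} holds for every $\G_d$ in the family with an error term whose exponent $\eta_0$ and whose implied constant (after dividing by $[\G:\G_d]$) do not degenerate as $d\to\infty$. This rests on two ingredients that I would invoke rather than reprove: the uniform spectral gap for $\{\G_d\}$ coming from \cite{SV} together with the archimedean transfer principle of \cite{BGS2} (which requires $\delta>(n-1)/2$, and the classification of $\hat G_{\op{non-temp}}$ to upgrade a spherical gap to the full spectral gap in Definition~\ref{sngg} when $n=2,3$ or $\delta>n-2$), and the fact that in Theorem~\ref{sec2} the $\G_d$-dependence of the main term is only through the index $[\G:\G_d]$ — exactly the feature emphasized after Theorem~\ref{secor}. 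A secondary technical point is checking effective well-roundedness of the norm-ball regions in $H\ba G$ relative to the Patterson--Sullivan density, i.e.\ that the PS-measure of $\e$-neighborhoods of the relevant boundary pieces in $K/M$ is $O(\e^{\beta'})$; this follows from \cite[sect.~7]{MO2} and, in favorable cases, from the boundaries avoiding $\Lambda(\G)$. Once these are in hand, the sieve machinery of \cite{BGS}, \cite{NS}, \cite{SS} applies essentially verbatim.
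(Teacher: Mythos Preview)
Your proposal is correct and follows essentially the same route the paper indicates: the paper (being a survey) does not give a standalone proof of Theorem~\ref{affine} but explains that it follows by feeding the uniform effective count of Theorem~\ref{sec2} for the congruence family $\{\G_d\}$ (with uniform spectral gap via \cite{SV} and \cite{BGS2}) into the combinatorial sieve machinery of \cite{BGS}, \cite{NS}, \cite{SS}, using effective well-roundedness of norm balls from \cite[sect.~7]{MO2}. Your outline is a faithful and somewhat more detailed expansion of exactly this strategy; the only small inaccuracy is that the error term in Theorem~\ref{sec2} is $O(e^{(\delta-\eta)T})$ uniformly (not divided by $[\G:\G_d]$), but this is precisely the level-of-distribution input the sieve needs, so the argument goes through as you describe.
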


The significance of $T^{\delta/\lambda}$ in the above theorem is that for $H=\op{Stab}_G(w_0)$,
$$\mathcal M_{H\ba G} \{w\in w_0G: \|w\|<T\}\asymp \{{\bf x}\in w_0\G: \|{\bf x}\|<T\} \asymp T^{\delta/\lambda}.$$
In view of the above discussion, it is also possible to state Theorem \ref{affine} for
more general sets $B_T$, instead of the norm balls, which then provides a certain uniform distribution of almost prime vectors.

When $\Gamma$ is an arithmetic subgroup of a simply connected semi-simple 
algebraic $\q$-group $G$, and $H$ is a symmetric subgroup, the analogue of Theorem 6.6 was obtained in
\cite{BeO}.  Strictly speaking, \cite{BeO} is stated only for a fixed
group; however it is clear from its proof that the statement also holds
uniformly over its congruence subgroups with the correct main term.
 Based on this, one can use the combinatorial sieve to
obtain an analogue of Theorem 6.6 as it was done for a group
variety in \cite{NS}. Theorem 6.6 on lower bound for arithmetic was obtained
by Gorodnik and Nevo \cite{GN}  further assuming that $H\cap \Gamma$ is co-compact in $H$.

\section{Application to sphere packings}
In this section we will discuss counting problems for sphere packings in $\br^{n}$
as an application of an orbital counting problem for thin subgroups of $\SO(n,1)$. 
By a sphere packing in the Euclidean space $\br^{n}$ for $n\ge 1$, we simply mean
a union of (possibly intersecting) $n-1$-dimensional spheres; here
an $n-1$ dimensional plane is regarded as a sphere of infinite radius.

Fixing a sphere packing $\mathcal P$
in $\br^n$, a basic problem is to understand the asymptotic of
the number $\#\{S\in \mathcal P: \text{Radius}(S)>t\}$ 
or equivalently  $\#\{S\in \mathcal P: \text{Vol}(S)>t\}$ where $\text{Vol}(S)$ means the
volume of the ball enclosed by $S$. For the sake of brevity, we will simply refer $\text{Vol}(S)$ to
the volume of $S$.
We will consider this problem in a more general setting, that is, 
allowing the volume of the sphere $S$ to be computed in various conformal metrics.

\medskip

Let $U$ be an open subset of $\br^n$ and $f$ a positive continuous function on $U$.
A conformal metric associated to the pair $ (U,f)$  is a metric on $U$
of the form $f(x) dx$ where $dx$ is the Euclidean metric on $\br^n$.
Given a conformal metric $(U, f)$, we set
  $\op{Vol}_f(S):=\int_{B} f(x)^n dx$ where $B$ is the ball enclosed by $S$. When $S$ is a plane, 
we put $\op{Vol}_f(S)=\infty$. 

For any compact subset $E$ of $U$ and $t>0$,
set $$N_{t}(\P, f,E):=\#\{S\in \P: \op{Vol}_f(S)>t,\;\; E\cap S\ne \emptyset \}.$$
For simplicity, we will omit $\P$ in the notation $N_{t}(\P, f,E)$.
When $(U, f)=(\br^n, 1)$, i.e., the standard Euclidean metric,
we simply write $N_t(E)$ instead of $N_t(1, E)$.
We call $\P$ {\it locally finite} if $N_t(E)<\infty$ for all $E$ bounded.

\medskip

In order to approach the problem of the computation of the asymptotic of $N_t(f,E)$,
a crucial condition is that $\P$ admits enough symmetries of Mobius transformations of $\br^n$.
By the Poincar\'e extension, the group $\op{MG}(\br^n)$ of Mobius transformations of $\br^n$
can be identified with the isometry group of the upper half space 
 $\bH^{n+1}=\{(z, r): z\in \br^n, r>0\}$.
We assume that $\P$ is invariant under 
 a non-elementary discrete subgroup $\G$ of $G:=\op{Isom}^+(\bH^{n+1})$. 
As before, let $\delta$ denote the critical exponent of $\G$ and 
$\{\nu_x: x\in \bH^{n+1}\}$ a Patterson-Sullivan density for $\G$.

We recall from \cite{OS1}:
\begin{Def}[The $\G$-skinning size of $\P$] {\rm
For a sphere packing $\P$ invariant under $\G$,
define $0\le \op{sk}_\G({\P})\le \infty$ as follows:
$$\op{sk}_\G({\P}):=\sum_{i\in I} \int_{s\in \op{Stab}_{\G} (S_i^\dagger)\ba S_i^\dagger}  e^{\delta
\beta_{s^+}(o,s)}d\nu_{o}(s^+)$$
where $o\in \bH^{n+1}$, 
$\{S_i:i\in I\}$ is a set of representatives of $\G$-orbits in $\P$, and
$S_i^\dagger\subset \T^1(\bH^{n+1})$ is the set of unit normal vectors to
 the convex hull of $S_i$.
}\end{Def}


\begin{Def}\rm For the pair $(U,f)$, we define
a Borel measure $\omega_{\G, f}$ on $U$: for $\psi\in C_c(U)$ and for $o\in \bH^{n+1}$,
$$\omega_{\G, f}(\psi)=\int_{z\in U} \psi (z) f(z)^{\delta}
 e^{\delta \beta_z(o, (z,1)) }\; d\nu_{o}(z). $$
\end{Def}
Alternatively, we have a simple formula:
$$d\omega_{\G, f} = f(z)^{\delta} (|z|^2+1)^{\delta} d\nu_{e_{n+1}} $$
where $e_{n+1}=(0,\cdots,0,1)\in \bH^{n+1}$.

\begin{Ex}
\rm 
\begin{enumerate}
 \item For the spherical metric $(\br^n, \frac{2}{1+|z|^2})$  (also called the chordal metric)
on $\br^n$, $\op{Vol}_{f}(S)$ is the spherical volume of the ball enclosed by $S$ and 
$d\omega_{ \G, f}= 2^{\delta}\cdot d\nu_{e_{n+1}} .$
\item For the hyperbolic metric $(\bH^n=\{z\in \br^n: z_n>0\} , \frac{1}{z_{n}})$, 
 $\op{Vol}_{f}(S)$ is the hyperbolic volume of the ball enclosed by $S$ and 
$d\omega_{ \G, f}=   \frac{(1+|z|^2)^{\delta}}{z_{n}^{\delta}} d\nu_{e_{n+1}} .$
\end{enumerate}
\end{Ex}

\begin{Def}\rm
 By an \emph{infinite bouquet of tangent spheres glued at a point $\xi\in {\br^n}\cup\{\infty\}$}, 
we mean a union of two collections, each consisting of infinitely many pairwise internally tangent spheres
with the common tangent point $\xi$ and their radii tending to $0$,
 such that the spheres in each collection are externally tangent to the 
spheres in the other at $\xi$.
\end{Def}

We set $v_n:=\op{Vol}(B(0,1))$ to be the
 Euclidean volume of the unit ball $B(0,1):=\{x\in \br^n: \|x\|<1\}$; $v_n$ is equal to
 $\frac{(2\pi)^{n/2}}{2\cdot 4 \cdots n}$ if $n$ is even
and $\frac{2(2\pi)^{(n-1)/2} }{1\cdot 3 \cdots n}$ if $n$ is odd.

\begin{Thm} \label{m1} Let $\P$ be a locally finite sphere packing invariant under
a geometrically finite group $\G$
with finitely many $\G$-orbits. In the case
of $\delta\le 1$, we also assume 
that $\P$ has no infinite bouquet of spheres glued at a parabolic fixed point of $\G$.

Then for any conformal metric $(U,f)$ and for any compact subset $E$ of $U$ whose boundary has zero Patterson-Sullivan density,
  as $t\to 0$,
$$ \lim_{t\to 0} N_t(f,E) t^{\delta/n} =
\frac{\op{sk_\G}(\mathcal P)\cdot v_n^{\delta/n}\cdot \omega_{\G,f}(E)}
{\delta\cdot  |m^{\BMS}|}. $$
 \end{Thm}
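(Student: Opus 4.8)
The plan is to reduce the sphere counting problem to an orbital counting problem in the homogeneous space $H\ba G$, where $G=\op{Isom}^+(\bH^{n+1})\simeq\SO(n+1,1)^\circ$ and $H$ is the stabilizer of (the convex hull of) one sphere in $\P$, and then apply the equidistribution results from Section 5 (Theorem \ref{eq}, or its effective form Theorem \ref{eq2}). Concretely: because the Poincar\'e extension identifies $\op{MG}(\br^n)$ with $\op{Isom}^+(\bH^{n+1})$, each sphere $S\subset\br^n$ corresponds to a totally geodesic hyperplane $\hat S\subset\bH^{n+1}$ (equivalently, the unit normal bundle $S^\dagger\subset\T^1(\bH^{n+1})$), and the stabilizer $H_S:=\op{Stab}_G(\hat S)$ is a symmetric subgroup isomorphic locally to $\SO(n,1)\times\SO(1)$. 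Since $\P$ has finitely many $\G$-orbits, with representatives $S_1,\dots,S_m$ and $H_i:=\op{Stab}_G(\hat S_i)$, the packing is parametrized by $\bigsqcup_i (\G\cap H_i)\ba\G$, so counting spheres meeting $E$ with $\op{Vol}_f(S)>t$ becomes counting $\G$-orbit points in a region $B_{t}\subset H_i\ba G$ cut out by the two conditions.

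**Next I would** translate the two geometric conditions into the $HA^+K$-coordinates of $G$ used in Section 5. For a sphere obtained from $S_i$ by applying $g\in G$, both the Euclidean (or conformal-$f$) size of the sphere and the location of its center/tangency are governed by the $KA^+K$ or $HA^+K$ decomposition of $g$: expanding a bounded totally geodesic hyperplane by geodesic time $t$ shrinks the corresponding sphere's radius like $e^{-t}$, so $\op{Vol}_f(S)\asymp f(\text{center})^n\cdot(\text{radius})^n$ translates into $t\approx -\tfrac1n\log(\text{Vol}) + (\text{bounded correction depending on }f\text{ and the }K\text{-part})$. The condition $E\cap S\neq\emptyset$ becomes a condition on the $K$-coordinate (the direction of the center), and the Patterson--Sullivan measure $\nu_o$ on $K/M=\partial(\bH^{n+1})$ enters precisely there. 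The boundary of $E$ having zero PS density is exactly the effective-well-roundedness / regularity hypothesis needed so that $F_{B_t}(e)=\#([e]\G\cap B_t)$ can be approximated by $\la F_{B_t},\Psi_\e\ra$ for an approximate identity, as in the displays \eqref{ft2} of Section 6. Carrying this out, the main term from Theorem \ref{eq} is
\[
N_t(f,E)\sim \sum_{i=1}^m \frac{|\mu_{H_i}^{\PS}|}{|m^{\BMS}|}\,m^{\BR}\big(\text{characteristic-type function of the region}\big),
\]
and one checks that $\sum_i|\mu_{H_i}^{\PS}|$ assembles into $\op{sk}_\G(\P)$ (this is exactly the definition of the $\G$-skinning size as a sum over orbit representatives of integrals of $e^{\delta\beta_{s^+}(o,s)}\,d\nu_o(s^+)$ over $\op{Stab}_\G(S_i^\dagger)\ba S_i^\dagger$), while the $m^{\BR}$-integral, after the change of variables from $t$ to $\op{Vol}$ and integrating out the $K$-direction against $\nu_o$, produces $\omega_{\G,f}(E)\cdot v_n^{\delta/n}/\delta$, where the $v_n^{\delta/n}$ comes from the Euclidean volume of the unit ball converting radius to volume and the $1/\delta$ from $\int e^{\delta t}\,dt$.

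**The hard part will be** two things. First, the measure-theoretic bookkeeping identifying the $\G$-equidistribution limit with the clean closed-form answer: one must match the Burger--Roblin measure $m^{\BR}$ pushed to $H\ba G$ with the measure $\mathcal M_{H\ba G}$ of Section 6 and then with $\omega_{\G,f}$, tracking the exponents $e^{\delta\beta_z(o,(z,1))}$ and the Jacobian factors $(|z|^2+1)^\delta$ that appear in the alternative formula $d\omega_{\G,f}=f(z)^\delta(|z|^2+1)^\delta\,d\nu_{e_{n+1}}$ — this is bookkeeping-heavy but forced. Second, and more seriously, handling the case $\delta\le 1$ where $|\mu_{H_i}^{\PS}|$ need not be finite: the finiteness of the skinning measure for the symmetric subgroup $H_i$ fails exactly when $S_i$ participates in an infinite bouquet of tangent spheres glued at a parabolic fixed point (this is where the parabolic-corank obstruction of Theorem 5.2 becomes visible geometrically), so the stated hypothesis rules out precisely the bad case; verifying that the hypothesis guarantees $|\mu_{H_i}^{\PS}|<\infty$ for every $i$, so that Theorem \ref{eq} genuinely applies, is the crux. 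One also needs local finiteness of $\P$ to ensure each $\G\cap H_i\ba H_i$ and the orbit $[e]\G$ are discrete. Finally, one must verify the wave-front / thickening step \eqref{tt} is legitimate here; since $H_i$ is symmetric this is covered by the mechanism of \cite{EM,OS} once finiteness of $\mu_{H_i}^{\PS}$ is in hand.
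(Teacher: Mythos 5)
Your overall architecture is sound, and its first half coincides with the paper's: the reduction to an orbital count in $H_i\ba G$ with $H_i=\op{Stab}_G(\hat S_i)$ symmetric (locally $\SO(n,1)\times \SO(1)$ inside $\SO(n+1,1)^\circ$), the identification of $\sum_i |\mu_{H_i}^{\PS}|$ with $\op{sk}_\G(\P)$, and the observation that the no-infinite-bouquet hypothesis is exactly what secures finiteness of the skinning measures when $\delta\le 1$ (via the parabolic-corank criterion, the corank here being at most $1$) are all as intended. Where you genuinely diverge is in the treatment of a general conformal factor $f$: you propose to build $f$ directly into a single family of regions $B_t(f,E)\subset H\ba G$, with a position-dependent cutoff in the $A^+$-coordinate, and run the equidistribution/well-roundedness machinery once. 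The paper instead treats only the Euclidean case $f\equiv 1$ by the orbital-counting method (quoting \cite{OS1}, extended to all $n$ via \cite{OS}) and deduces the general case by a covering argument: cover $E$ by small cubes $Q_z(\eta)$ on which $f$ is $(1+O(\e))$-constant by uniform continuity, apply the $f=1$ asymptotic to each cube, and sum, the only delicate point being the choice of cubes with PS-null boundaries, which the paper arranges by centering them on the minimal-dimensional sphere carrying positive PS mass. Your route is more self-contained but shifts the real work into what you call ``bookkeeping'': the boundary of $B_t(f,E)$ now has a curved face $\{\op{Vol}_f(S)=t\}$ whose shape depends on $f$, the approximation $\op{Vol}_f(S)=(1+o(1))\,f(\mathrm{center})^n\, v_n\, r^n$ holds only uniformly for small radii (so uniform continuity of $f$ reappears there), and one must control the measure of $\e$-neighborhoods of this boundary using only that $\partial E$ is PS-null. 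That is where essentially all the content of the general-$f$ statement lives; the paper's localization avoids redoing it, at the cost of the covering argument, while your direct approach would have to generalize the boundary analysis of \cite{OS1}. Both are viable, but you should not present that step as routine.
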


The assumption on the non-existence of an infinite bouquet in $\P$ is to ensure that the
$\G$-skinning size for $\P$ is finite.

 In the case when $(U,f)=(\br^n,1)$, Theorem \ref{m1} was proved in \cite{OS1} for the case of $n=2$ and
and the proof given there extends easily for any $n\ge 2$ using the general equidistribution result in \cite{OS}.
To give an interpretation of Theorem \ref{m1} as a special case of
 the orbital counting problem discussed in the subsection \ref{oca},
fixing $H$ to be the stabilizer of a sphere $S_0$ in $\P$, we may think of $H\ba G$
as the space of all totally geodesic planes in $\bH^{n+1}$. Then a key point
is to describe a particular subset $B_t(E)$ in $H\ba G$ such that
$N_t(E)$ is same as $\# [e]\G \cap B_t(E)$.

The extension to a general conformal metric $(U,f)$ is possible basically due to the uniform continuity of $f$ on a compact subset $E$ and a covering
argument. We give a brief sketch as follows (the argument below was established in a discussion with Shah):
 denote by $Q_z(\eta)$ the cube
$\{z'\in \br^n: \max_{1\le i\le n} |z_i-z_i'|\le \eta\}$ 
centered at $z\in \br^n$ with radius $\eta$.

First, Theorem \ref{m1} for $f=1$, together with
 the uniform continuity of $f$ on $E$, implies
 that for any $\e>0$, there exists $\eta=\eta(\e)>0$ (depending
only on $E$ and $f$)  such that for any
cube $Q_z(\eta)$ centered at $z\in E$
\begin{equation}\label{lm}  \frac{N_t(f,Q_z(\eta)) t^{\delta/n} }{v_n^{\delta/n}}=
 (1+O(\e))\frac{\op{sk_\G(\mathcal P)}}{\delta \cdot |m^{\BMS}|} f(z)^{\delta} \omega_{\G}(Q_z(\eta)).  \end{equation}

Let $k$ be the minimal integer such that a $k$-dimensional sphere, say, $P$
charges a positive PS density. As the PS density is atom-free, we have $k>0$ and
the limit set $\Lambda(\Gamma)$ is contained
in $P$ (see \cite[Prop. 3.1]{Ro}).
We cover $E\cap P$ with cubes $\{Q_z(\eta):z\in I_{\eta}\}$  with disjoint interiors
for a finite subset $I_{\eta}$ of $E\cap P$.
As each cube is centered at a point of $P$ which is a sphere, the intersection of its 
boundary with $P$ is contained in a $(k-1)$-dimensional sphere for all small $\eta>0$.
 It follows that
 the boundary of each cube has zero PS density
by the minimality assumption on $k$.
Let $\mathcal C (\eta):=\{Q_z(\eta):z\in \tilde I_{\eta}\}$ be a covering of $E$
with $\tilde I_{\eta}\supset I_\eta$. Note that the boundary of each
cube in $\mathcal C(\eta)$ has  zero PS density.
 We can find compact subsets $E_\e^{\pm}$ of $U$ with
$E_\e^-\subset E\subset E_\e^+$ and a positive integer $m_\e$
so that
$\omega_{\G, f}(E_\e^+-E_\e^-)<\e$
and $E_\e^+$ (resp. $E$)
 contains all cubes centered at $E$ (resp. $E_\e^-$)
and of size less than $\eta$
possibly except at most $m_\e$ number of such cubes.

We may also assume that
$f(z)=(1+O(\e)) f(z')$ for all $z'\in Q_z(\eta)\in \mathcal C(\eta)$ where
the implied constant is uniform for all $z\in E$.
We may now apply Theorem \ref{lm} for $f=1$ to each cube in the covering of
$\mathcal C(\eta)$ to obtain that 

\begin{align*}
\frac{N_t(f,E) t^{\delta/n}}{v_n^{\delta/n}}& =
(1+O(\e)) \frac{\op{sk_\G(\mathcal P)}}{\delta \cdot |m^{\BMS}|}
 \sum_{z\in \tilde I_{\eta}}
 f(z)^{\delta} \omega_{\G}(Q_z(\eta))  \\
&= (1+O(\e)) \frac{\op{sk_\G(\mathcal P)}}{\delta \cdot |m^{\BMS}|}
  \omega_{\G, f}(E_\e^+) + O(\e)  \\
&= (1+O(\e))\frac{\op{sk_\G(\mathcal P)}}{\delta \cdot |m^{\BMS}|}
  \omega_{\G, f}(E) +O(\e) .
\end{align*}

As $\e>0$ is arbitrary, this proves Theorem \ref{m1}.

\section{On Apollonian circle packings}
\subsection{Construction} In the case of Apollonian circle packings in the plane $\br^2=\c$, Theorem \ref{m1} can be made more explicit
as the measure $\omega_{\G,f}$ turns out to be the $\delta$-dimensional Hausdorff measure with respect to $(U,f)$, restricted to
$\Lambda(\G)$.

We begin by recalling Apollonian circle packings, whose
construction is based on the following theorem of Apollonius of Perga:
\begin{thm}[Apollonius, 200 BC]
	Given $3$ mutually tangent circles in the plane (with distinct tangent points),
 there exist precisely two circles tangent to all three circles. 
\end{thm}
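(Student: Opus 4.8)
The plan is to work in the conformal model where circles in $\hat\c = \c \cup \{\infty\}$ correspond to their spanning hyperbolic geodesic planes in $\bH^3$, and to exploit the classical fact that the space of oriented circles is naturally a space of a quadratic form. Concretely, I would identify an oriented circle (or line) with a vector $v$ in a $4$-dimensional real vector space equipped with the Lorentzian form of signature $(3,1)$, normalized by a suitable choice of "curvature coordinates" (the bend, the bend-times-center, and so on, as in the Descartes circle description). In these coordinates tangency of two circles translates into a bilinear condition $\la v_1, v_2\ra = \pm 1$ on the associated vectors, with the sign recording internal versus external tangency; mutual tangency with distinct tangent points forces the stronger configuration in which the three vectors are linearly independent and the Gram matrix of $(v_1,v_2,v_3)$ is an explicit $3\times 3$ matrix of full rank.

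The key steps, in order, would be: (i) fix the three mutually tangent circles $C_1, C_2, C_3$ and pass to their curvature vectors $v_1, v_2, v_3$; (ii) observe that a circle $C$ is tangent to all three precisely when its vector $v$ satisfies the three affine-linear equations $\la v, v_i\ra = 1$ for $i=1,2,3$ together with the one quadratic normalization $\la v,v\ra = $ (the fixed constant cutting out genuine curvature vectors); (iii) since $v_1,v_2,v_3$ are linearly independent, the three linear equations define an affine line $L$ in the $4$-space; (iv) restrict the quadratic form to $L$ and show it is a genuine (non-degenerate, indefinite-on-$L$) quadratic in one parameter, so the intersection of $L$ with the normalization quadric consists of exactly two points; (v) check that both solutions correspond to honest circles or lines (the discriminant is positive and nonzero — here one uses that the tangent points are distinct, which is exactly what prevents the two solutions from coinciding or degenerating). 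Translating back, these two vectors are the two Apollonius circles.

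The main obstacle I anticipate is step (v): verifying that the quadratic in the parameter along $L$ has \emph{positive} discriminant rather than zero or negative, i.e. that there are genuinely two real, distinct solutions and not a single tangent solution or an empty intersection. This is where the hypothesis "distinct tangent points" does real work — if two of the tangent points coincide the Gram matrix degenerates and the line $L$ becomes tangent to the quadric — so I would isolate a clean algebraic identity expressing the discriminant in terms of $\det$ of the Gram matrix of $(v_1,v_2,v_3)$ and show it is strictly positive under the distinct-tangency hypothesis. An alternative, more synthetic route avoiding coordinates is to apply a Möbius transformation sending one of the tangent points to $\infty$, which straightens two of the three circles into parallel lines and the third into a circle inscribed between them; in that normalized picture the two Apollonius circles are visibly the two circles of the same radius tangent to both lines and to the given circle, one on each side, and existence and uniqueness are immediate. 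I would likely present the Möbius-normalization argument as the clean proof and relegate the quadratic-form computation to a remark, since the former makes the role of "distinct tangent points" transparent (it is what guarantees the normalizing Möbius map exists and the configuration is the expected strip picture).
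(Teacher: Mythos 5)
The paper does not prove this statement at all: it is quoted as a classical theorem of Apollonius and used only as the basis for the construction of Apollonian packings, so there is no ``paper proof'' to compare against. Your proposal, in particular the M\"obius-normalization argument you single out as the clean proof, is correct and complete in outline. Sending the tangency point of $C_1$ and $C_2$ to $\infty$ turns $C_1,C_2$ into parallel lines; the hypothesis of distinct tangent points is exactly what guarantees that $C_3$ avoids that point and hence becomes a genuine circle inscribed in the strip. A circle of $\hat\c$ tangent to both lines is then either a Euclidean circle of diameter equal to the strip width centered on the midline (tangency to $C_3$ forces its center to sit at distance one diameter from that of $C_3$, giving exactly the two neighbors), or a line, which must be parallel to the strip and tangent to $C_3$ and is therefore one of $C_1,C_2$ themselves --- so the count of new circles is precisely two. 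I would spell out that last case (ruling out ``line'' solutions through $\infty$) explicitly, since it is the one place a reader could object. Your Lorentzian quadratic-form sketch is also a viable route (it is the standard Descartes-coordinates picture), but as you note it leaves real work in step (v): the sign conventions for internal versus external tangency and the positivity of the discriminant along the affine line $L$ both need careful bookkeeping, whereas the normalization argument makes existence, uniqueness, and the role of the distinctness hypothesis transparent. Presenting the M\"obius argument as the proof and the quadratic form as a remark, as you propose, is the right choice.
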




Consider four mutually tangent circles in the plane with distinct points of tangency. 
By the above Apollonius' theorem, one can add four new circles each of which is tangent to three of the given ones.
Continuing to repeatedly add new circles tangent to three of the previous circles,
we obtain an infinite circle packing, called an {\it Apollonian circle packing}.
 Figure \ref{f2} shows the first three generations of this procedure where each circle
is labeled with its curvature (the reciprocal of its radius).

\medskip

\begin{figure}
 \begin{center}
  \includegraphics[width=1.2in]{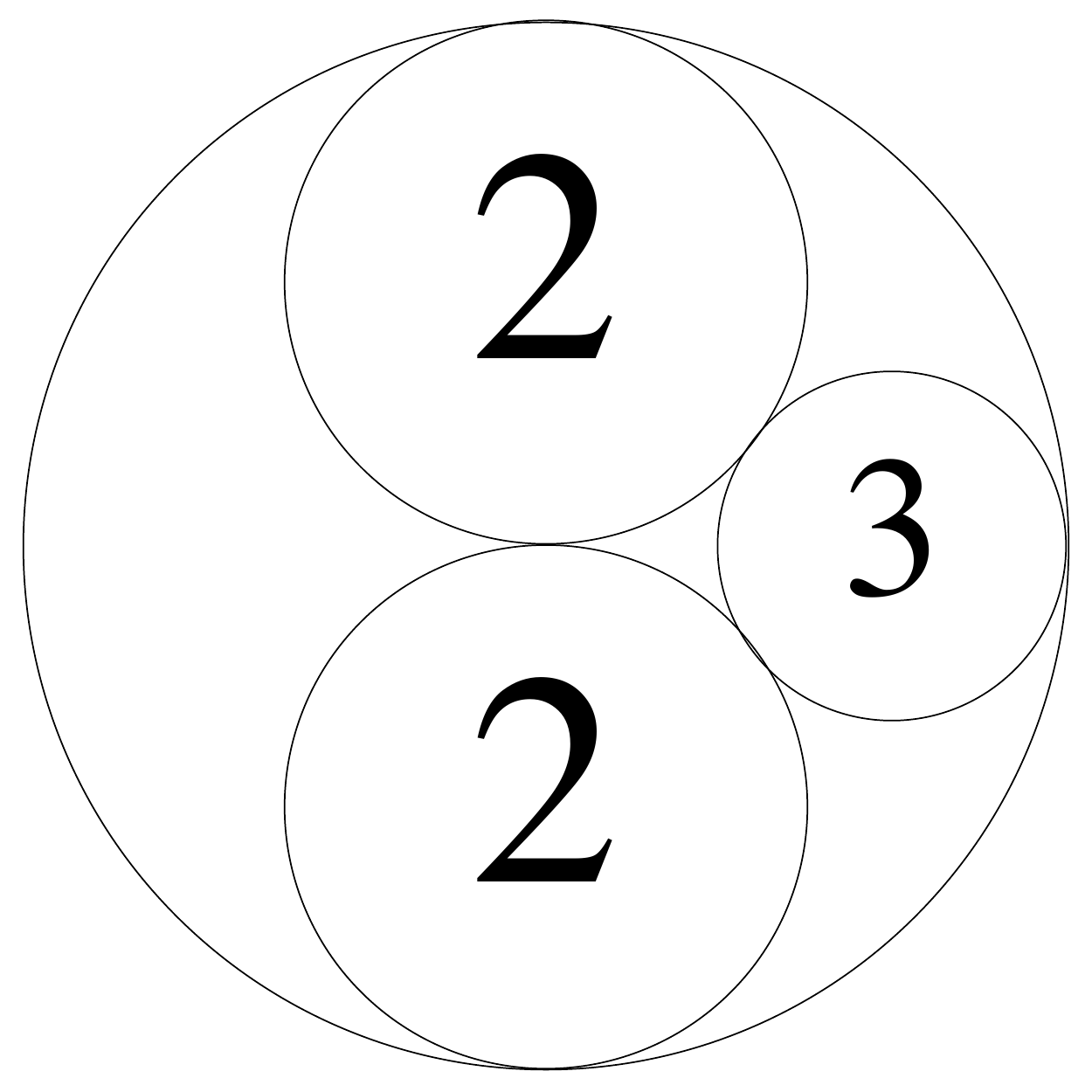} \includegraphics[width=1.2in]{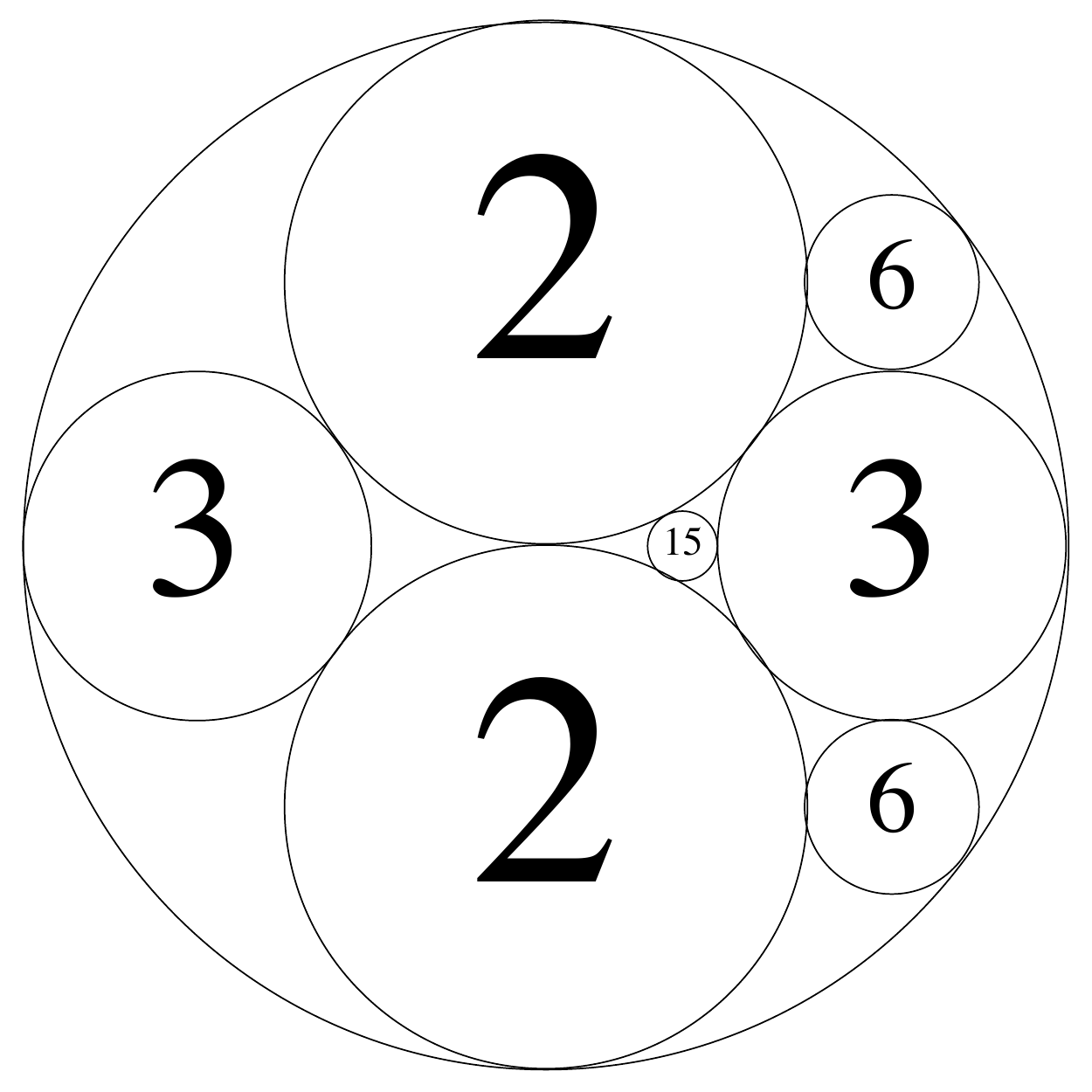} 
\includegraphics[width=1.2in]{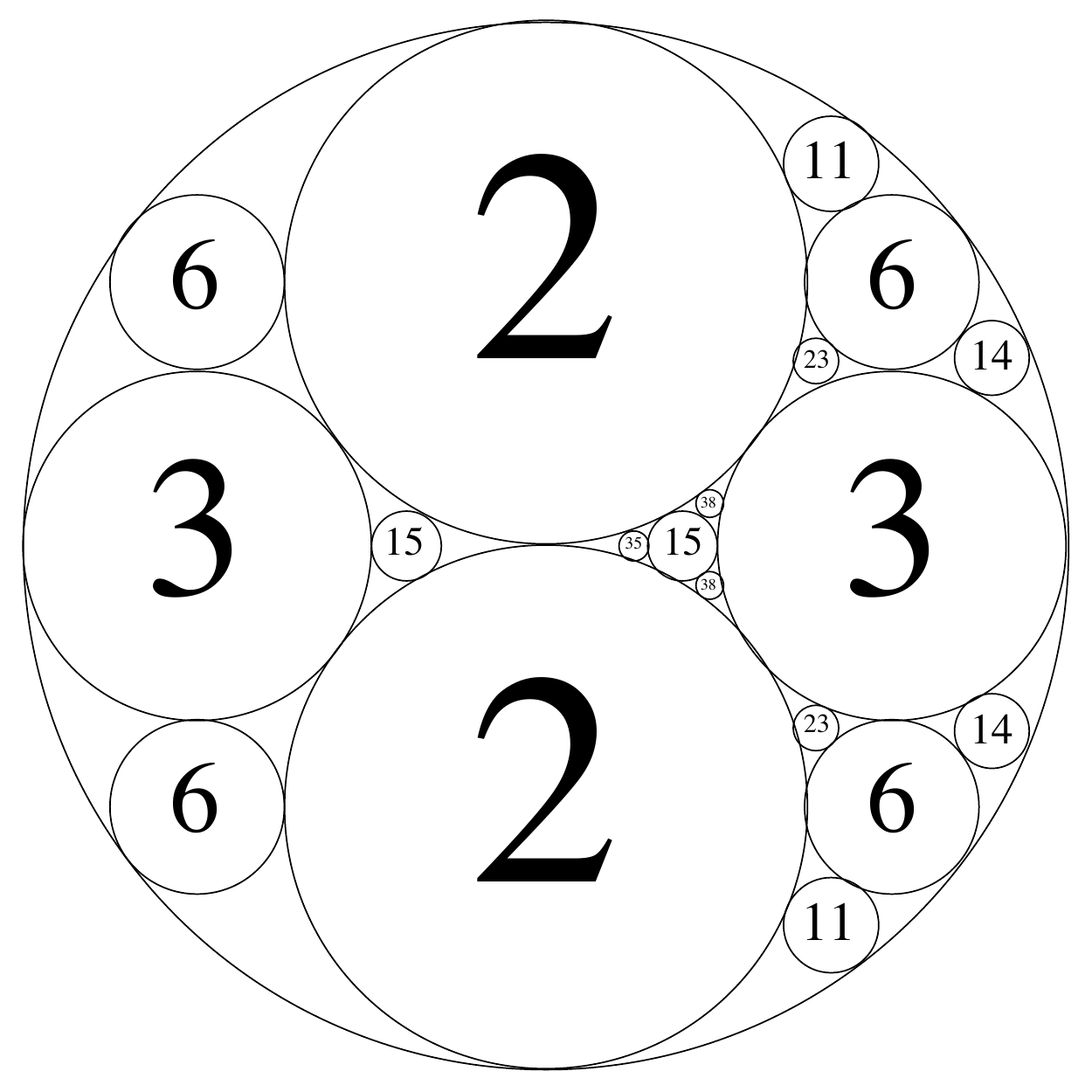}\includegraphics [width=1.2in]{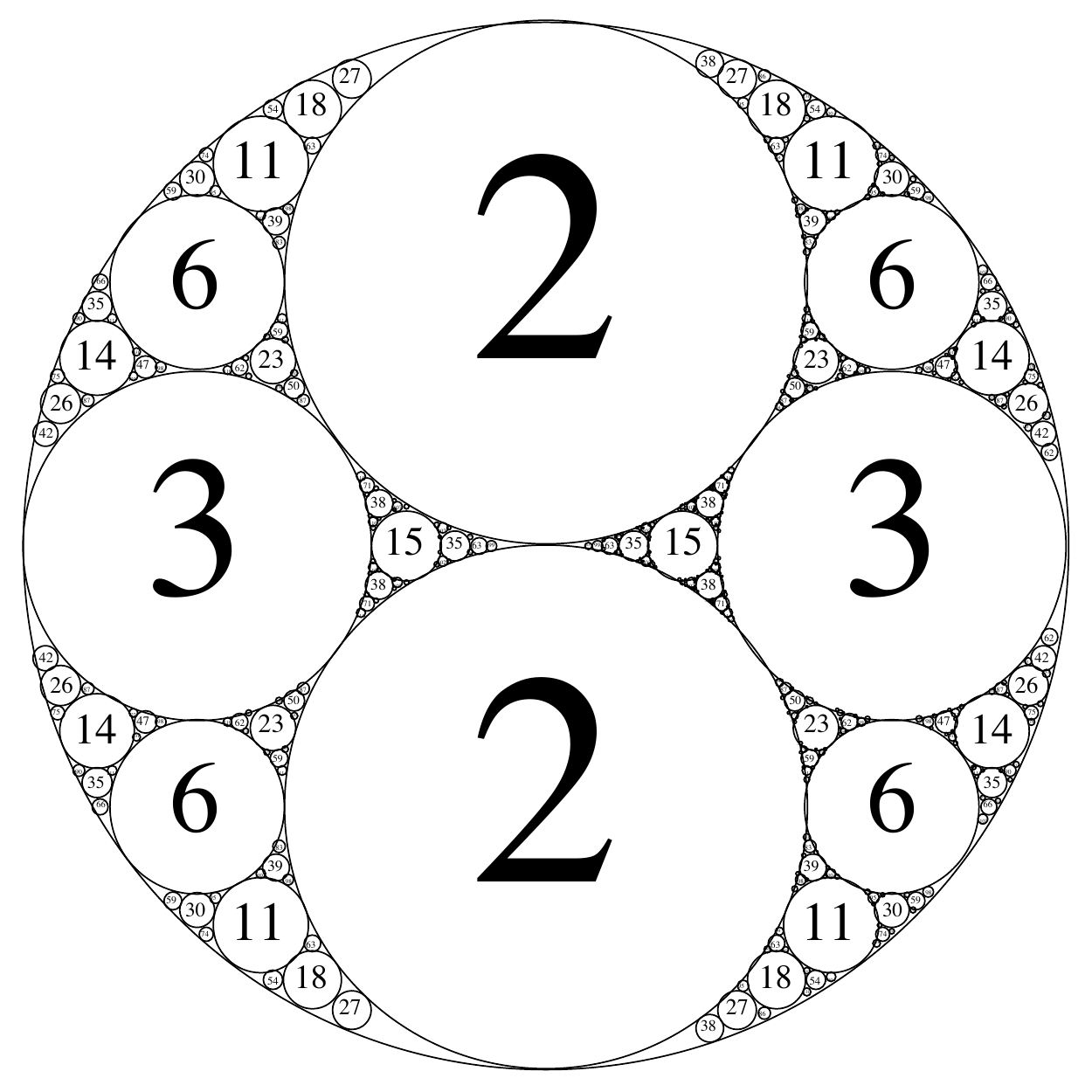}
\end{center}\label{f2}\end{figure}

\subsection{Apollonian packing and Hausdorff measure}
We start by recalling: \begin{dfn}\label{ha} \rm For $s>0$,
the $s$-dimensional  Hausdorff (also known as covering) measure ${\mathcal H}^{s}$
 of a closed subset $E$ of $\br^2$
is defined as follows: 
$$\mathcal H^{s}(E):=\lim_{\e \to 0} \; \inf \{\sum_{i\in I} \op{diam}(D_i)^{s}:
 E\subset \cup_{i\in I} D_i, \; \op{diam}(D_i) \le \e\} .$$
\end{dfn}

The Hausdorff dimension of $E$ is then given as 
 $$\op{dim}_\H (E):=\sup \{s: \H^s(E)=\infty\}=\inf\{s\ge 0: \mathcal H^s(E)=0\} .$$
For $s$ a positive integer, the $s$-dimensional Hausdorff measure is proportional to
the usual Lebesgue measure on $\br^s$.

For an Apollonian circle packing $\P$, the residual set $\Res(\P)$ is defined to be the closure of the union of all circles in $\P$.
Its Hausdorff dimension, say $\alpha$, is independent of $\P$ and known to be approximately
$1.30568(8)$ \cite{Mc}.

\begin{Thm}\label{ac}
 Let $\P$ be any Apollonian circle packing. 
For any conformal metric $(U,f)$ and for any compact subset $E\subset U$ with smooth boundary, 
we have
$$ \lim_{t\to 0} t^{\alpha/2}\cdot  \#\{C\in \P: \text{area}_f(C)>t, C \cap E \ne \emptyset\} = 
c_A\cdot \mathcal H_f^\alpha(\op{Res}(\P)\cap E) $$
where $c_A>0$ is independent of  $\P$ and
$d\mathcal H_{f}^\alpha (z)=f(z)^\alpha \cdot   d\mathcal H^\alpha(z) .$
\end{Thm}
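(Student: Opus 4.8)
The plan is to derive Theorem \ref{ac} as a special case of Theorem \ref{m1} by identifying, in the Apollonian setting, the abstract measure $\omega_{\G,f}$ restricted to $E$ with a constant multiple of the conformal Hausdorff measure $\mathcal H_f^\alpha$ restricted to the residual set. First I would fix the Apollonian group: recall that $\op{Res}(\P)$ is the limit set $\Lambda(\G)$ of the geometrically finite group $\G<\op{Isom}^+(\bH^3)$ generated by the four inversions (or rather its relevant index-two subgroup), that $\delta = \alpha = \dim_\H(\op{Res}(\P))$, and that $\G$ has only finitely many orbits of circles in $\P$ with finite $\G$-skinning size (there is no infinite bouquet of tangent circles in a genuine Apollonian packing). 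This places us squarely in the hypotheses of Theorem \ref{m1}, which already gives
$$\lim_{t\to 0} N_t(f,E)\, t^{\delta/2} = \frac{\op{sk}_\G(\P)\cdot v_2^{\delta/2}\cdot \omega_{\G,f}(E)}{\delta\cdot |m^{\BMS}|},$$
with $v_2 = \pi$. So the entire content to be proven is the measure-theoretic identity $\omega_{\G,f}|_E = c\cdot \mathcal H_f^\alpha|_{\op{Res}(\P)\cap E}$ for a universal constant $c$, after which $c_A = c\cdot \op{sk}_\G(\P)\cdot \pi^{\alpha/2}/(\delta|m^{\BMS}|)$; one then checks $c_A$ is independent of $\P$ because $\op{sk}_\G(\P)/|m^{\BMS}|$ is (both scale the same way under conjugation, and the Apollonian group is unique up to conjugacy).

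The core step is thus to show that the Patterson-Sullivan density $\nu_o$ on $\Lambda(\G)\subset \hat{\c}$, suitably conformally weighted as in the definition of $\omega_{\G,f}$, coincides up to a constant with the $\alpha$-dimensional Hausdorff measure on $\Lambda(\G)$. The plan here is to invoke Sullivan's theorem (and its refinements by Stratmann--Velani via the global measure formula): for a geometrically finite $\G$ with limit set having no parabolic points of maximal rank — which holds for the Apollonian group since all parabolic points have rank $1 < \delta$ — the PS measure $\nu_o$ is Ahlfors $\delta$-regular and in fact comparable to $\mathcal H^\delta|_{\Lambda(\G)}$; moreover, since the packing has no parabolic obstruction (the $\delta\le 1$ caveat of Theorem \ref{m1} is irrelevant as $\delta=\alpha>1$), one gets that $\nu_o$ equals a constant times $\mathcal H^\alpha|_{\Lambda(\G)}$ exactly, not merely up to bounded ratios — this exact proportionality is the Patterson/Sullivan uniqueness statement for the conformal density combined with the fact that $\mathcal H^\alpha|_{\Lambda(\G)}$, appropriately transported, is itself a $\G$-conformal density of dimension $\alpha$. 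Tracing the conformal factors: $\mathcal H^\alpha$ transforms under a Möbius map $g$ by the $\alpha$-th power of the linear distortion $|g'|$, which is precisely $e^{-\alpha \beta_{\cdot}(\cdot,\cdot)}$, so $\{|g'(z)|^\alpha\, d\mathcal H^\alpha\}$ satisfies the two axioms of an $\G$-invariant conformal density; by uniqueness up to scaling (for $\G$ of divergence type, as stated in the excerpt) it is a multiple of $\{\nu_x\}$. Unravelling the definition $d\omega_{\G,f} = f(z)^\delta(|z|^2+1)^\delta\, d\nu_{e_{n+1}}$ and matching $(|z|^2+1)^\delta\, d\nu_{e_3}$ against the chordal-to-Euclidean Jacobian power then yields $d\omega_{\G,f} = c\, f(z)^\alpha\, d\mathcal H^\alpha = c\, d\mathcal H_f^\alpha$ on $\op{Res}(\P)$.

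Finally I would handle the boundary condition: Theorem \ref{m1} requires $\partial E$ to have zero PS density, while here we assume $\partial E$ is smooth; since $\Lambda(\G) = \op{Res}(\P)$ is a fractal of Hausdorff dimension $\alpha < 2$ and the PS measure is non-atomic and supported on it, a smooth curve $\partial E$ meets $\Lambda(\G)$ in a set that (by transversality / the Stratmann--Velani dimension estimates for the ``shadow'' of the geodesic hull, or simply because a smooth $1$-manifold cannot carry positive $\alpha$-dimensional mass for $\alpha>1$) has $\nu_o$-measure zero, so the hypothesis of Theorem \ref{m1} is met. The main obstacle I anticipate is precisely the exact (not merely comparable) proportionality $\nu_o = c\cdot\mathcal H^\alpha|_{\Lambda(\G)}$: the comparability is classical Sullivan theory, but upgrading to an exact constant requires the conformal-density uniqueness argument above together with verifying that $\mathcal H^\alpha|_{\Lambda(\G)}$ is genuinely locally finite and positive (both follow from geometric finiteness and $\delta$ exceeding the maximal parabolic rank, which in the Apollonian case is $1$). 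Everything else is bookkeeping with conformal factors and the already-established Theorem \ref{m1}.
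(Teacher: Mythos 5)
Your proposal is correct and follows essentially the same route as the paper: reduce to Theorem \ref{m1} via the symmetry group $\G_{\P}$ (geometrically finite, $\Lambda(\G_\P)=\op{Res}(\P)$, $\delta=\alpha$, finitely many orbits), and then identify $\omega_{\G,f}$ with a constant multiple of $\mathcal H_f^\alpha$ using Sullivan's theorem that, since all parabolic points have rank $1<\alpha$, the suitably weighted Patterson--Sullivan measure equals $\mathcal H^\alpha$ on the residual set. Your extra details (the conformal-density uniqueness argument for the exact proportionality, and the observation that a smooth boundary is $\nu_o$-null because $\alpha>1$) are correct fillings-in of steps the paper leaves implicit.
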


The symmetry group $\G_{\P}:=\{g\in \PSL_2(\c): g(\P)=\P\}$ satisfies the following:
\begin{enumerate}
\item  $\G_{\P}$ is geometrically finite.
\item The limit set of $\G_{\P}$ coincides with $\Res(\P)$; in particular, its critical exponent is $\alpha$.
\item There are only finitely many $\G_{\P}$-orbits of circles in $\P$. \end{enumerate}

 Let
$\nu_{\P, j}$ denote the PS measure
viewed from $j=(0,0,1)\in \bH^3$ for the group $\G_{\P}$.
As $\G_{\P}$ has no rank $2$ parabolic limit points and $\alpha>1$,
 Sullivan's work \cite{Sullivan1984} implies that the $\alpha$-dimensional Hausdorff measure $\mathcal H^\alpha$
is a locally finite measure on $\Res(\P)$ and that
 $$\frac{1}{|\nu_{{\P},j}|} (|z|^2+1)^\alpha \; d\nu_{\P, j} =d\mathcal H^{\alpha}.$$
Therefore
 Theorem \ref{ac} is a special case of Theorem \ref{m1}.
Moreover the constant $c_A$ is given by
  $$c_A=\frac{\pi^{\alpha/2} \cdot \op{sk}_{\G_{\P}}(\mathcal P)\cdot |\nu_{\P, j}|}
{\alpha \cdot  |m^{\BMS}|}$$
for any Apollonian circle packing $\P$.
We propose to call $c_A$ the Apollonian constant.

\begin{Q}
\rm
Compute (or estimate) $c_A$!
\end{Q}

When $\P$ is a bounded Apollonian circle packing,
 the existence of the asymptotic formula $\#\{C\in \P: \text{area}(C)>t, \}
\sim c_\P\cdot  t^{\alpha/2}$ was first shown in \cite{KO} without an error term, and
 later in \cite{LOA} with an error term (see also \cite{V}). In view of \cite{OS1},
Theorem \ref{eq2} can be used to prove
an effective circle count in a compact region $E$ 
for general Apollonian packings, provided the boundary of $E$ satisfies a regularity property.

There is also a beautiful arithmetic aspect of Apollonian circle packings
which is entirely omitted in this article. We refer to  \cite{Sa}, \cite{OhICM}, \cite{BF}, \cite{BK}, etc.

\subsection{Apollonian sphere packing for $n=3$}
Given $n+1$ mutually tangent spheres in $\br^n$ with disjoint interiors,
it is known that
there is a unique sphere, called a {\it dual sphere}, passing through their points of tangency
and orthogonal to all $(n+1)$ spheres \cite[Thm 7.1]{GrahamLagarias2007}. 
Hence for $(n+2)$ mutually tangent spheres  with disjoint interiors
 $S_1, \cdots, S_{n+2}$ in $\br^n$, there are $(n+2)$  dual spheres,
say, $\tilde S_1, \cdots, \tilde S_{n+2}$. 
The Apollonian
group $\mathcal A=\mathcal A (S_1, \cdots, S_{n+2})$ 
is generated by
the inversions with respect to $\tilde S_i$, $1\le i\le n+2$.

Only for $n=2$ or $3$,
the Apollonian group $\mathcal A$ is a discrete subgroup of $\op{MG}(\br^n)$ \cite[Thm 4.1]{GrahamLagarias2007} and
in this case its orbit $\mathcal P:=\cup_{i=1}^{n+2} \mathcal A(S_i)$ consists of spheres with disjoint
interiors. For $n=2$, $\mathcal P$ is an Apollonian circle packing.
 For $n=3$,
$\P$ is called an Apollonian sphere packing. Note that $\mathcal A$ is geometrically finite
and that $\P$ is locally finite, as the spheres in
$\P$ have disjoint interiors,
 Hence Theorem \ref{m1} applies to $\P$.
The critical exponent of $\mathcal A$ for $n=3$ has been estimated to be $2.473946(5)$
in \cite{BPP}.

\subsection{Dual Apollonian Cluster Ensemble for any $n\ge 2$}
Given $(n+2)$ mutually tangent spheres  with disjoint interiors
 $S_1, \cdots, S_{n+2}$ in $\br^n$,
let $\mathcal A^*$ denote the group generated by the inversions with respect to
$S_i$, $1\le i\le n+2$.
The dual Apollonian group $\mathcal A^*$ is a discrete geometrically finite
subgroup of $\op{MG}(\br^n)$  for all $n\ge 2$ and
the orbit $\mathcal P:=\cup_{i=1}^{n+2} \mathcal A^*(S_i)$
is a sphere packing, in our sense, consisting of spheres nested
in $S_i$'s. We note that $\P$ is locally finite, as nested spheres are getting smaller and
smaller and hence Theorem \ref{m1} applies to $\P$.

\section{Packing circles of the ideal triangle in $\bH^2$} 
Consider  an ideal triangle $\mathcal T$ in $\bH^2$ i.e., a triangle whose sides are hyperbolic lines
connecting vertices on the boundary of $\bH^2$.
An ideal triangle exists uniquely up to hyperbolic congruences. Consider $\P(\mathcal T)$ the
circle packing of an ideal triangle by filling in largest inner circles.
The notation $\overline{\P(\mathcal T)}$ denotes the closure of $\P(\mathcal T)$
and $\text{area}_{\op{Hyp}}(C)$ is the hyperbolic area of the disk enclosed by $C$.

\begin{thm}[Packing circles of the ideal triangle]\label{pac} Let $\mathcal T$ be the ideal triangle of $\bH^2$.
Then
 $$ \lim_{t\to 0} \; t^{\alpha /2} \cdot \#\{C\in \P(\mathcal T): \text{area}_{\op{Hyp}}(C)>t\} = c_A
  \cdot\int_{\overline{\P(\mathcal T)}} y^{-\alpha}\;  d\mathcal H^{\alpha}(z)$$
where $c_A$ denotes the Apollonian constant.
\end{thm}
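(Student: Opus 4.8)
The plan is to realize $\mathcal P(\mathcal T)$ as a sphere packing invariant under a geometrically finite group and then apply Theorem \ref{m1} (in its form Theorem \ref{ac}) directly. First I would identify the relevant symmetry group: the ideal triangle $\mathcal T$ together with its inscribed circle packing is invariant under the group $\G$ generated by reflections in the three sides of $\mathcal T$, which is a geometrically finite (indeed, a non-cocompact lattice-like) subgroup of $\op{Isom}(\bH^2)$; passing to an orientation-preserving finite-index subgroup if convenient, one checks that $\mathcal P(\mathcal T)$ consists of finitely many $\G$-orbits of circles (the filling-in-largest-inner-circles procedure is $\G$-equivariant once one records the finitely many initial circles, so finiteness of orbits is immediate), and that $\mathcal P(\mathcal T)$ is locally finite because the enclosed disks are disjoint and nested inside $\mathcal T$. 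One also needs that the limit set of $\G$, which is the closure $\overline{\mathcal P(\mathcal T)}$ intersected with $\partial \bH^2$, together with the circles, gives a residual set whose Hausdorff dimension is the critical exponent $\delta$ of $\G$; the key point is that this $\delta$ coincides with the Apollonian exponent $\alpha$, which follows because after applying a Möbius transformation sending $\mathcal T$ appropriately, $\mathcal P(\mathcal T)$ is Möbius-equivalent to (a piece of) a standard Apollonian circle packing, so the groups are commensurable-up-to-conjugacy and share the critical exponent and the Apollonian constant $c_A$.

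Next I would invoke Theorem \ref{m1} with the conformal metric $(U,f) = (\bH^2, 1/y)$, i.e. the hyperbolic metric on the upper half-plane, so that $\op{Vol}_f(C) = \op{area}_{\op{Hyp}}(C)$. Theorem \ref{m1} requires the compact region $E$ to have boundary of zero Patterson--Sullivan measure; here the natural choice is $E = \overline{\mathcal T}$ (or an exhaustion of $\mathcal T$ by compact subsets), whose boundary consists of the three geodesic sides and three ideal vertices. Since the limit set of $\G$ is contained in $\partial \bH^2$ and the three sides meet it only at the cusps, the PS measure of $\partial E$ is zero. Applying Theorem \ref{m1} then gives
$$\lim_{t\to 0} t^{\delta/2} \cdot \#\{C\in \mathcal P(\mathcal T): \op{area}_{\op{Hyp}}(C) > t\} = \frac{\op{sk}_\G(\mathcal P(\mathcal T))\cdot v_2^{\delta/2}\cdot \omega_{\G,f}(E)}{\delta\cdot |m^{\BMS}|},$$
and by the Example computation $d\omega_{\G,f} = \frac{(1+|z|^2)^\delta}{y^\delta}\, d\nu_{e_3}$, which, using Sullivan's identification of $(|z|^2+1)^\delta d\nu$ with a multiple of the $\delta$-dimensional Hausdorff measure $\mathcal H^\delta$ on the limit set (valid since $\G$ has no rank-$2$ parabolic points and $\delta = \alpha > 1$), becomes proportional to $\int_{\overline{\mathcal P(\mathcal T)}} y^{-\delta}\, d\mathcal H^\delta(z)$. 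Matching constants exactly as in Theorem \ref{ac}, and using $\delta = \alpha$, collapses the whole right-hand side into $c_A \cdot \int_{\overline{\mathcal P(\mathcal T)}} y^{-\alpha}\, d\mathcal H^\alpha(z)$.

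The only genuinely delicate points — and where I expect the real work to lie — are, first, justifying that the relevant group is \emph{geometrically finite with critical exponent exactly $\alpha$}: one must be careful because the reflection group of the ideal triangle is itself a lattice (critical exponent $1$), so the correct $\G$ is not that reflection group but rather the symmetry group of the circle packing $\mathcal P(\mathcal T)$ sitting inside $\PSL_2(\c)$ acting on $\bH^3$ (viewing the plane circles via the Poincaré extension), which is the Apollonian-type group with $\delta = \alpha$; pinning down this identification — that $\mathcal P(\mathcal T)$ is Möbius-conjugate to a sub-packing of an Apollonian packing, so that its symmetry group is the Apollonian symmetry group and $c_A$ is literally the same constant — is the crux. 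Second, one must verify the hypothesis of Theorem \ref{m1} in the borderline regime: since $\alpha > 1$ we are in the case $\delta > 1$, so the no-infinite-bouquet condition is automatic, but one should still confirm $\op{sk}_\G(\mathcal P(\mathcal T)) < \infty$, which again reduces to the Apollonian case already handled in \cite{OS1}. Once these structural identifications are in place, the proof is a direct citation of Theorem \ref{m1} together with the Hausdorff-measure reformulation used in Theorem \ref{ac}, with no further computation required.
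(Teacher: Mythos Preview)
Your plan has a genuine gap, and the paper explicitly flags it: Theorem~\ref{m1} (and its Apollonian specialization, Theorem~\ref{ac}) applies only to a \emph{compact} subset $E$ of the domain $U$ of the conformal metric. For the hyperbolic metric the domain is $U=\br^2_+=\{y>0\}$, and the ideal triangle $\mathcal T$ is \emph{not} compact there---it has three cusps, one at $\infty$ and two on the real axis. Your parenthetical ``or an exhaustion of $\mathcal T$ by compact subsets'' is where all the content lies, and it does not go through automatically: to pass from an exhaustion $E_R\nearrow \mathcal T$ to the full triangle you must show both that $\mathcal H_f^\alpha(\mathcal T)<\infty$ and that the circle count over $\mathcal T\setminus E_R$ is $o(t^{-\alpha/2})$ uniformly as $R\to\infty$. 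Neither is obvious, since near a cusp the weight $f(z)^\alpha=y^{-\alpha}$ blows up (at $z=\pm 1$) or the region is unbounded (at $\infty$), and infinitely many circles of $\P_0$ accumulate there.

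The paper handles this as follows. It realizes $\mathcal P(\mathcal T)$ as the intersection of the standard Apollonian packing $\P_0$ (bounded by the lines $x=\pm 1$ and the unit circle) with $\br^2_+$, so the relevant group is the Apollonian group $\G_0<\PSL_2(\c)$ with $\delta=\alpha$; this part you essentially identified. The new work is Theorem~\ref{cm1}: using the translation $z\mapsto z+2i$ in $\G_0$, the strip $\{|x|\le 1,\ y>R\}$ is broken into unit blocks $E_n=\{n\le y<n+1\}$, and the count over $E_n$ is compared to the count over $E_1$ via the decay $f(x+iy)\ll y^{-k}$. This gives a convergent series $\sum n^{-k\alpha}$ precisely when $k>\alpha^{-1}$, controlling the cusp at $\infty$. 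The cusps at $\pm 1$ are then handled by conjugating by an element of $\G_0$ sending $\infty$ to $1$, which converts the condition to $k<2-\alpha^{-1}$. For the hyperbolic metric $k=1$, and $\alpha\approx 1.3>1$ gives $\alpha^{-1}<1<2-\alpha^{-1}$, so both cusp estimates hold and the exhaustion argument closes. Your outline omits this entire step; without it the proof is incomplete.

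A smaller point: your claim that ``the limit set of $\G$ is contained in $\partial\bH^2$'' is not right for the correct group. The Apollonian group $\G_0$ acts on $\bH^3$, and its limit set is the residual set $\Res(\P_0)\subset\c$, which is the full fractal, not a subset of the real line. The boundary of $\mathcal T$ does meet this limit set along arcs of the three bounding circles; what matters for Theorem~\ref{ac} is that smooth arcs have zero $\mathcal H^\alpha$-measure since $\alpha>1$, not that they avoid the limit set.
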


Fix the Apollonian circle  packing $\mathcal P_0$ generated by two vertical lines $x=\pm 1$ and
the unit circle $\{|z|=1\}$. The corresponding Apollonian group
$\G_0=\G(\P_0)$ is generated by the inversions with respect to horizontal lines $y=0$ and $y=-2i$
and the circles $\{|z-(\pm 1-i)|=1\}$. We set $\br^2_+:=\{x+iy:y>0\}$. 
Now for the conformal metric $(U,f)=(\br^2_+, 1/y)=\bH^2$,
we note that
$\{C\in \P(\mathcal T): \text{area}_{\op{Hyp}}(C)>t\}=\{C\in \P_0: \text{area}_{f}(C)>t,
C\cap \mathcal T\ne\emptyset\}$.

However Theorem \ref{pac} does not immediately follow from Theorem \ref{ac}
since the ideal triangle $\mathcal T$ is not a compact subset of $\bH^2$.

We need to understand the $\mathcal H^{\alpha}_f$-measure
of neighborhoods of cusps in the triangle for $f=1/y$.
For the next two theorems, consider a conformal metric $(\br^2_+, f)$.
\begin{Thm}\label{cm1}
If $f(x+iy)\ll y^{-k}$ for some real number $k>\alpha^{-1}$ with implied constant independent of $|x|\le 1$,
then for any $\eta>0$, $$\mathcal H_f^\alpha\{|x|\le 1, y>\eta\} <\infty .$$ 
Moreover for any Borel subset $E\subset \{|x|\le 1, y>\eta\} $ (not necessarily compact) with smooth boundary,
$$ \lim_{t\to \infty} \; t^{\alpha /2} \cdot \#\{C\in \P_0: \text{area}_{f}(C)>t, 
C \cap E \ne \emptyset\}\sim c_A  \cdot \mathcal H_f^\alpha (E) .$$
\end{Thm}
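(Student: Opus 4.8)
\textbf{Proof proposal for Theorem \ref{cm1}.}

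The plan is to reduce the non-compact count to a compact one already covered by Theorem \ref{ac} (and its local refinement, the displayed formula \eqref{lm}), using the rapid decay of the relevant measures near the horizontal line $y=0$, and exploiting the invariance of $\P_0$ under the parabolic subgroup of $\G_0$ generated by the horizontal translation $z\mapsto z+2$. First I would establish the finiteness claim $\mathcal H_f^\alpha\{|x|\le 1,\ y>\eta\}<\infty$. Using the identity $d\mathcal H^\alpha = \frac{1}{|\nu_{\P_0,j}|}(|z|^2+1)^\alpha\, d\nu_{\P_0,j}$ from Sullivan's work quoted above, together with $d\mathcal H_f^\alpha = f^\alpha\, d\mathcal H^\alpha$ and the hypothesis $f(x+iy)\ll y^{-k}$ with $k\alpha>1$, the $\mathcal H_f^\alpha$-mass of a slab $\{|x|\le 1,\ \eta<y<\eta'\}$ is controlled by $\int y^{-k\alpha}(|z|^2+1)^\alpha\, d\nu_{\P_0,j}$; the only possible divergence is as $y\to\infty$, which I would handle by tiling the region $\{|x|\le 1,\ y>1\}$ into the parabolic-translates of a fundamental strip and summing a geometrically decaying series, since $\nu_{\P_0,j}$ is a finite measure on the (bounded) limit set and the $\G_0$-invariance of the PS density converts vertical translation into exponential decay of the Busemann weight. (Note that the limit set of $\G_0$ is the real line together with $\infty$, so that the support of $\mathcal H^\alpha$ here is $\R\cup\{\infty\}$; near $y=\infty$ one is near the parabolic fixed point $\infty$, and the rank-$1$ cusp estimate of Sullivan gives the decay.)

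Next, for the counting asymptotic over a Borel set $E\subset\{|x|\le 1,\ y>\eta\}$ with smooth boundary, I would run a compact exhaustion combined with a tail bound. Fix $\eta'\gg\eta$ and write $E=E'\sqcup E''$ where $E'=E\cap\{y\le \eta'\}$ is compact (hence Theorem \ref{ac} applies, giving $t^{\alpha/2}\#\{C\in\P_0:\operatorname{area}_f(C)>t,\ C\cap E'\ne\emptyset\}\to c_A\,\mathcal H_f^\alpha(E')$) and $E''=E\cap\{y>\eta'\}$ is the tail. For the tail I would show
$$
\limsup_{t\to\infty} t^{\alpha/2}\,\#\{C\in\P_0:\operatorname{area}_f(C)>t,\ C\cap E''\ne\emptyset\}
= O\bigl(\mathcal H_f^\alpha(E'')\bigr),
$$
which tends to $0$ as $\eta'\to\infty$ by the finiteness established in the first step. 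To get this uniform tail bound I would again decompose $E''$ into the parabolic translates $\{z+2m: m\in\Z\}$ of a single bounded piece and apply the \emph{effective} (or at least uniform-in-$E$) version of the count on each piece; here the point is that the constant in Theorem \ref{ac}/\eqref{lm} is proportional to $\omega_{\G,f}(\cdot)$ with an implied constant uniform over translates, because $\omega_{\G_0,f}$, $\P_0$ and $\G_0$ are all invariant under $z\mapsto z+2$ up to the explicit $f$-weight, and that weight is $\ll y^{-k\alpha}$. Summing over $m$ gives a bound by $\sum_m \mathcal H_f^\alpha(\text{$m$-th piece}) = \mathcal H_f^\alpha(E'')$, as claimed. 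Combining the two pieces and letting $\eta'\to\infty$ yields $\lim_t t^{\alpha/2}\#\{\dots,\ C\cap E\ne\emptyset\} = c_A\,\mathcal H_f^\alpha(E)$.

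The main obstacle I anticipate is the tail estimate near $y=\infty$: Theorem \ref{ac} is stated only for compact $E$, so one needs a genuinely uniform handle on how many large-area circles of $\P_0$ reach into a thin, far-away horizontal slab, uniformly in the slab's height. The clean way around this is the parabolic periodicity of $(\P_0,\G_0,\omega_{\G_0,f})$ together with the decay hypothesis $f\ll y^{-k}$, $k>\alpha^{-1}$, which is precisely what makes $\sum_m\mathcal H_f^\alpha(\text{$m$-th translate})$ converge; without the hypothesis $k\alpha>1$ the series diverges and the statement is false, so the exponent condition is exactly the borderline for summability. A secondary technical point is that $E$ is only assumed to have smooth boundary (not compact closure), so one must check that the boundary of each compact truncation $E'$ still has zero Patterson--Sullivan density; this follows because the truncating lines $y=\eta'$ meet the limit set $\R\cup\{\infty\}$ in a single point (or not at all), which has zero $\nu_{\P_0,j}$-measure since the PS density is atom-free.
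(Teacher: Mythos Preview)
Your high-level strategy---compact exhaustion plus a tail bound obtained by exploiting the parabolic periodicity of $\P_0$ at the cusp $\infty$, and then summing a series whose convergence is equivalent to $k\alpha>1$---is exactly the paper's strategy. However, two concrete errors in your implementation would make the argument fail as written.

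First, the parabolic element of $\G_0$ fixing $\infty$ is the \emph{vertical} translation $\gamma_0(z)=z+2i$ (it arises from composing the reflections in the horizontal lines $y=0$ and $y=-2$), not the horizontal translation $z\mapsto z+2$. The packing $\P_0$ lives in the strip $|x|\le 1$ and has no horizontal periodicity. Your tail region $E''=E\cap\{y>\eta'\}$ extends in the $y$-direction, so decomposing it into horizontal translates $\{z+2m:m\in\Z\}$ is vacuous: those translates are disjoint from $E''$ for $m\ne 0$. The paper instead slices $\{|x|\le 1,\ y>1\}$ into horizontal slabs $E_n=\{|x|\le 1,\ n\le y<n+1\}$ and uses the vertical translation to inject $F_t(E_n)$ into $F_{tn^{-2k}}(E_1)$: if $C\cap E_n\ne\emptyset$ then $C-(n-1)i\in\P_0$ meets $E_1$ and, since $f(z)\ll y^{-k}$, one has $\operatorname{area}_f(C-(n-1)i)\ll n^{-2k}\operatorname{area}_f(C)$. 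Summing the compact-case asymptotic for $E_1$ over $n$ gives the tail bound $\sum_{n\ge N} n^{-k\alpha}$, finite precisely when $k\alpha>1$. Once you replace ``horizontal'' by ``vertical'' everywhere, your outline becomes this argument.

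Second, your description of the limit set is wrong: $\Lambda(\G_0)$ is not $\R\cup\{\infty\}$ but the residual set $\operatorname{Res}(\P_0)$, a fractal of Hausdorff dimension $\alpha\approx 1.306$ filling the strip $|x|\le 1$. In particular your boundary-regularity check (``the truncating lines $y=\eta'$ meet the limit set in a single point'') is incorrect; the line $y=\eta'$ meets $\operatorname{Res}(\P_0)$ in an uncountable set. The paper sidesteps this by applying Theorem \ref{ac} only to the fixed compact box $E_1$ (whose boundary is a smooth curve, hence has zero PS measure by the argument preceding Theorem \ref{m1}), not to an exhausting family of truncations.
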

\begin{proof}
 It suffices to show the claim for $\eta=1$, since
 $\{|x|\le 1, \eta\le y \le 1\}$ is a compact subset. So we put $\eta=1$ and set $U_{R}:=\{|x|\le 1, y>R\}$.
Define $F_t(E):=\{C\in \P_0: \text{area}_{f}(C)>t, 
C \cap E \ne \emptyset\}$ and $E_n:=\{|x|\le 1, n\le y<n+1\}$.
Then $F_t(U_1)=\cup_{n\ge 1} F_t(E_n)$.

For $C\in F_t(E_n)$, $C-(n-1)i\in F_t(E_1)$ and
\begin{align*}\text{area}(C-(n-1)i)&=\int_{C-(n-1)i}f(z)^2 dz
 \\ & =\int_{C}f(z +(n-1)i))^2 dz\ll n^{-2k} \text{area}(C).
\end{align*}
Hence we get an injective map $F_t(E_n)$ to $F_{t n^{-2k}}(E_1)$
and hence for $R_0\ge 1$,
$$\# F_t(U_{R_0})=\sum_{n\ge R_0}\# F_{t n^{-2k} }(E_1).$$

By Theorem \ref{ac},
for $\e>0$, there exists $t_\e$ such that
for all $t<t_\e$,
$$\# F_t(E_1)\le t^{-\alpha /2} c_A (\mathcal H_f^\alpha(E_1)+\e)$$
and hence there exists $N_\e>1$ such that for all $t\le 1$ and $n>N_\e$,
$$\# F_{t n^{-2k} }(E_1)\le t^{-\alpha /2} n^{-k\alpha} c_A (\mathcal H_f^\alpha(E_1)+\e)$$
and hence 
$$\# F_t(U_{N_\e})t^{\alpha/2}\le (\sum_{n\ge N_\e}  n^{-k\alpha}) c_A (\mathcal H_f^\alpha(E_1)+\e).$$

Since
$ (\sum_{n\ge N_\e}  n^{-k\alpha})<\infty$ as $k\alpha>1$,
if $N_\e\gg 1$ is sufficiently large,
we can make $\# F_t(U_{N_\e})t^{\alpha /2}\le \e $
and $\mathcal H_f^\alpha(U_{N_\e})< \e$. This implies $\mathcal H_f^\alpha(U_{N_\e})< \infty$. 
Moreover,
\begin{multline*}\limsup \# F_t(U_1)t^{\alpha /2} =\limsup \#\sum_{1\le n<N_\e} F_t(E_n)t^{\alpha/2} +O(\e)\\ =
c_A\cdot \mathcal H_f^\alpha(U_1)+O(\e).\end{multline*}
As $\e>0$ is arbitrary,
we have $$\limsup \# F_t(U_1)t^{\alpha /2}=c_A \cdot \mathcal H_f^\alpha(U_1).$$
Similarly we have $\liminf \# F_t(U_1)t^{\alpha /2}=c_A \cdot \mathcal H_f^\alpha(U_1).$
Hence $$\lim \# F_t(U_1)t^{\alpha /2}=c_A \cdot \mathcal H_f^\alpha(U_1).$$ In the same way, we can deduce the claim for any Borel set $E$
using $\mathcal H_f^\alpha (E\cap U_{N_\e})=O(\e)$.
\end{proof}

We note that the boundary of $\br_+^2$ meets with $\op{Res}(\P_0)$ at three points
$1,-1,\infty$ and these three points are in one $\G_0$-orbit.
Note that $\G_0$ contains an element $\gamma_0$: $\gamma_0(x+yi)=x+(y+2)i$.

\begin{Thm} Let $(\br_+^2,f)$ be a conformal metric
such that
$f(x+iy)\asymp y^{-k}$ for $k\in \br$.  
\begin{enumerate}
 \item
If $\alpha ^{-1}< k < 2-\alpha^{-1}$,
we have $ \H_f^\alpha (\br_+^2) <\infty .$ 
\item If either $k<\alpha^{-1}$ or $k>2-\alpha^{-1}$, we have $ \H_f^\alpha (\br_+^2) =\infty .$
\end{enumerate}
\end{Thm}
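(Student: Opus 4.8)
Since $\mathcal H^\alpha$ is carried by $\op{Res}(\P_0)=\Lambda(\G_0)$, and $\P_0$ fills the interstices of its four generating circles, all of which lie in the strip $\{|x|\le 1\}$, we have $\op{Res}(\P_0)\subset\{|x|\le1\}$ and, by $f\asymp y^{-k}$,
\[
\mathcal H^\alpha_f(\br^2_+)\;\asymp\;\int_{\op{Res}(\P_0)\cap \br^2_+} y^{-k\alpha}\,d\mathcal H^\alpha(z).
\]
The closure of $\op{Res}(\P_0)\cap\br^2_+$ meets $\br\cup\{\infty\}$ only at the three cusps $1,-1,\infty$ of $\G_0$; away from fixed small neighbourhoods of these three points, $\op{Res}(\P_0)\cap\br^2_+$ is a compact subset of $\br^2_+$, on which $y\asymp1$, contributing a finite amount. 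Hence $\mathcal H^\alpha_f(\br^2_+)<\infty$ iff each of the three cusp contributions is finite, and the map $z\mapsto-\bar z$ (which normalises $\G_0$ and fixes $\br^2_+$, $y$ and $\mathcal H^\alpha$) identifies the cusps $1$ and $-1$. So it suffices to treat $\infty$ and $1$.

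\emph{The cusp at $\infty$.} The element $\gamma_0\colon z\mapsto z+2i$ of $\G_0$ is a Euclidean translation, so it preserves both $\mathcal H^\alpha$ and $\op{Res}(\P_0)$; therefore $c_n:=\mathcal H^\alpha\big(\op{Res}(\P_0)\cap\{|x|\le1,\ n\le y<n+1\}\big)$ is periodic in $n$, so $c_n\asymp1$ for $n\ge1$. Since $y\asymp n$ on this slab, the $\infty$-contribution is $\asymp\sum_{n\ge1}n^{-k\alpha}c_n\asymp\sum_{n\ge1}n^{-k\alpha}$, which is finite iff $k>\alpha^{-1}$ (the finite direction also being immediate from Theorem \ref{cm1}).

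\emph{The cusp at $1$.} As $1$ and $\infty$ are $\G_0$-equivalent, fix $g\in\G_0$ with $g(\infty)=1$; it conjugates $\langle\gamma_0\rangle$ to the parabolic subgroup at $1$, sends a horoball neighbourhood of $\infty$ to one of $1$, and — $\op{Res}(\P_0)$ being $\G_0$-invariant — carries $\op{Res}(\P_0)$ near $\infty$ onto $\op{Res}(\P_0)$ near $1$. Normalising $g(w)=(w+b)/(w+d)$, for $w=w_1+iw_2$ with $|w_1|\le1$ and $|w_2|\ge R$ one gets $|g'(w)|\asymp|w_2|^{-2}$ and $|g(w)-1|\asymp|w_2|^{-1}$, and, crucially, $\op{Im}(g(w))\asymp|w_2|^{-1}$. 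This last comparison is the geometric heart of the matter: the circle $\{x=1\}\in\P_0$ passes through the cusp $1$ transversally to $\br$, so the cusp region at $1$ is the horn $\{\sqrt{1-y^2}\le x\le1,\ y>0\}$, where $1-x\ll y^2$ and hence $y=\op{Im}(z)\asymp|z-1|$; since $\op{Res}(\P_0)$ near $1$ is confined to this horn (being confined to $\{|x|\le1\}$), the comparison holds on $\op{Res}(\P_0)$ there. Substituting $z=g(w)$ and using the conformal change of variables $d\mathcal H^\alpha(z)=|g'(w)|^\alpha\,d\mathcal H^\alpha(w)$, the $1$-contribution becomes
\[
\asymp\int_{\op{Res}(\P_0)\cap\{|w_1|\le1,\,|w_2|\ge R\}\cap g^{-1}(\br^2_+)}|w_2|^{k\alpha}\,|w_2|^{-2\alpha}\,d\mathcal H^\alpha(w)\;\asymp\;\sum_{n\ge1}n^{(k-2)\alpha}c_n\;\asymp\;\sum_{n\ge1}n^{(k-2)\alpha},
\]
finite iff $k<2-\alpha^{-1}$.

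Combining the three cases, $\mathcal H^\alpha_f(\br^2_+)<\infty$ precisely when $\alpha^{-1}<k<2-\alpha^{-1}$ (an interval that is nonempty exactly because $\alpha>1$), and $\mathcal H^\alpha_f(\br^2_+)=\infty$ when $k<\alpha^{-1}$ or $k>2-\alpha^{-1}$ (indeed also at the two endpoints, where the relevant series is $\sum n^{-1}$). The step I expect to be the main obstacle is the estimate $\op{Im}(g(w))\asymp|w_2|^{-1}$ — equivalently, that $\op{Res}(\P_0)$ approaches each of the cusps $\pm1$ inside a horn on which $y\asymp|z-1|$, rather than flattening against $\br$; making this precise uses both that the boundary circle of $\P_0$ through the cusp is transverse to $\br$ and that $\op{Res}(\P_0)$ stays in $\{|x|\le1\}$, so that the vertical strip carrying $\op{Res}(\P_0)$ near $\infty$ is mapped by $g$ onto a genuine cuspidal horn at $1$.
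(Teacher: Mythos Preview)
Your proof is correct and follows essentially the same route as the paper: reduce to the three cusp contributions, handle the cusp at $\infty$ via the translation $z\mapsto z+2i$ (equivalently Theorem~\ref{cm1}), and transport the cusp at $1$ back to $\infty$ by an element of $\G_0$ sending $\infty$ to $1$, the key input being $\Im(\gamma(w))\asymp y^{-1}$ on the strip. The paper uses the explicit anti-holomorphic $\gamma(z)=(\bar z-i)/(\bar z-1-i)$ and leaves part~(2) to the reader; your periodicity argument $c_n\asymp1$ and your geometric ``horn'' justification for the imaginary-part estimate are welcome elaborations but not a different approach.
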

\begin{proof} Let $\gamma (z)=\frac{\bar z-i}{\bar z-1-i}$. Then $\gamma\in \G_0$ and $\gamma(\infty)=1$.
Hence if we set $U_\eta:=\{|x|\le 1, y>\eta\}$, $\gamma(U_\eta)$ is a neighborhood of $1$ for all large $\eta>1$.
We will show that $\H_f^\alpha (\gamma(U_\eta))<\infty$
if $k<2-\alpha^{-1}$.

We have
\begin{align*}&
\H_f^\alpha (\gamma(U_\eta))=\int_{z\in \gamma(U_\eta)} f(z)^\alpha d\H^\alpha(z)\\ &=
\int_{w\in U_\eta} f(\gamma(w))^\alpha |\gamma'(w)|^{\alpha}  d\H^\alpha(w)
\end{align*}

Since $|\gamma'(w)|=|w-1+i|^{-2}$ and $\Im(\gamma(w))\asymp y^{-1}$, we have
$$f(\gamma(w)) |\gamma'(w)| \asymp y^{k-2}. $$
Hence by Theorem \ref{cm1},
if $2-k>\alpha^{-1}$,
$$\H_f^\alpha (\gamma(U_\eta))<\infty .$$
The remaining cases can be proved similarly and we leave them to the reader.
\end{proof}

Note that $\mathcal T:=\P_0\cap \br_+^2$ is a circle packing of the curvilinear triangle
made by largest inner circles.

As $\alpha>1$, Theorem \ref{pac} is a special case of the following:
\begin{Thm}
 Let $( \br_+^2,f)$ be a conformal metric
such that $f(x+iy)\asymp y^{-k}$ where $\alpha ^{-1}< k < 2-\alpha^{-1}$. 
Then for any Borel subset $E\subset \br_+^2$ (not necessarily compact)
with smooth boundary, we have
$$ \lim_{t\to \infty} \; t^{\alpha /2} \cdot \#\{C\in \mathcal T: \text{area}_{f}(C)>t, 
C \cap E \ne \emptyset\}\sim c_A  \cdot \mathcal H_f^\alpha (E) .$$
\end{Thm}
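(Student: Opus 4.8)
The plan is to reduce this statement to Theorem~\ref{cm1} by the same cusp-neighborhood trimming argument used in the proof of Theorem~\ref{cm1}, applied now to all three cusps of the curvilinear triangle simultaneously. Recall that $\op{Res}(\P_0)\cap \partial(\br_+^2)$ consists of the three points $1,-1,\infty$, which form a single $\G_0$-orbit, and that the triangle $\mathcal T=\P_0\cap\br_+^2$ has exactly these three points as its ideal vertices. First I would fix, for each vertex $\xi\in\{1,-1,\infty\}$, a small horoball-type neighborhood $V_\xi$ in $\br_+^2$, chosen so that $\infty$ gets the neighborhood $U_R=\{|x|\le 1,\ y>R\}$ for large $R$, and the other two are images $\gamma_\xi(U_R)$ under elements $\gamma_\xi\in\G_0$ taking $\infty$ to $\xi$ (for $\xi=1$ one can use $\gamma(z)=\frac{\bar z-i}{\bar z-1-i}$ as in the previous theorem, and similarly for $-1$). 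The complement $E'_R:=E\setminus(V_1\cup V_{-1}\cup V_\infty)$ is then a compact subset of $\br_+^2$ with (after a harmless perturbation) smooth boundary, so Theorem~\ref{ac} — in the form of Theorem~\ref{cm1}'s compact case, or directly Theorem~\ref{m1} — applies to give
\[
\lim_{t\to 0}\ t^{\alpha/2}\cdot\#\{C\in\mathcal T:\ \op{area}_f(C)>t,\ C\cap E'_R\ne\emptyset\}
= c_A\cdot\mathcal H_f^\alpha(E'_R).
\]

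Next I would bound the contribution of the three cusp neighborhoods. For $V_\infty=U_R$ this is exactly the estimate already carried out in the proof of Theorem~\ref{cm1}: using the translation $\gamma_0(x+iy)=x+(y+2)i\in\G_0$ one gets an injection $F_t(E_n)\hookrightarrow F_{tn^{-2k}}(E_1)$ where $E_n=\{|x|\le1,\ n\le y<n+1\}$, and since $f(x+iy)\asymp y^{-k}$ with $k>\alpha^{-1}$ the series $\sum_{n\ge R}n^{-k\alpha}$ converges, so both $\#F_t(U_R)t^{\alpha/2}$ and $\mathcal H_f^\alpha(U_R)$ can be made arbitrarily small by taking $R$ large. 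For the finite vertices $\xi=\pm1$ I would transport the problem to $\infty$ via $\gamma_\xi$: the conformal change-of-variables formula gives $\mathcal H_f^\alpha(\gamma_\xi(U_R))=\int_{U_R}f(\gamma_\xi(w))^\alpha|\gamma_\xi'(w)|^\alpha\,d\mathcal H^\alpha(w)$, and the computation in the last theorem of the excerpt shows $f(\gamma_\xi(w))|\gamma_\xi'(w)|\asymp y^{k-2}$ with $k-2<-\alpha^{-1}$, so the same convergent-series argument (now applied to the decay exponent $2-k>\alpha^{-1}$) bounds both the count and the $\mathcal H_f^\alpha$-mass over $\gamma_\xi(U_R)$. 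Since $\P_0$ is $\gamma_\xi$-invariant, circles of $\mathcal T$ meeting $\gamma_\xi(U_R)$ correspond bijectively (with $f$-areas transformed as above) to circles meeting $U_R$, so this genuinely controls the original count near $\xi=\pm1$.

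Finally I would assemble the pieces: writing $F_t(E)=F_t(E'_R)\sqcup(\text{contributions from }V_1,V_{-1},V_\infty)$, taking $\limsup$ and $\liminf$ as $t\to0$, and then letting $R\to\infty$, the cusp terms vanish uniformly while $\mathcal H_f^\alpha(E'_R)\to\mathcal H_f^\alpha(E)$ (finiteness of the latter is exactly the content of the preceding theorem under the hypothesis $\alpha^{-1}<k<2-\alpha^{-1}$), yielding $\lim_{t\to0}t^{\alpha/2}\#\{C\in\mathcal T:\op{area}_f(C)>t,\ C\cap E\ne\emptyset\}=c_A\cdot\mathcal H_f^\alpha(E)$. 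I expect the main obstacle to be purely bookkeeping rather than conceptual: one must check that the three cusp neighborhoods can be chosen disjoint and that the "boundary error" set $E'_R$ can be taken with smooth (or at least zero-PS-density) boundary uniformly in $R$, and one must handle the overlap regions $E\cap\partial V_\xi$ so that no circles are double-counted or dropped. A minor technical point is verifying the $\asymp$ estimate $\Im(\gamma_\xi(w))\asymp y^{-1}$ and $|\gamma_\xi'(w)|\asymp |w-\xi'|^{-2}$ uniformly on $U_R$ for each of the two finite vertices; this is the same elementary Möbius computation already indicated in the excerpt and presents no real difficulty.
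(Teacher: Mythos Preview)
Your proposal is correct and follows exactly the route the paper sets up: the paper does not write out a separate proof for this final theorem, but the two preceding results (Theorem~\ref{cm1} for the cusp at $\infty$ when $k>\alpha^{-1}$, and the Möbius change-of-variables showing the cusps at $\pm1$ behave like a cusp at $\infty$ with effective exponent $2-k>\alpha^{-1}$) are precisely the ingredients you assemble, and your compact-core-plus-three-cusps decomposition with $R\to\infty$ is the intended conclusion. The only caveat is the one you already flag: the map $\gamma_\xi$ involves complex conjugation and need not preserve $\br_+^2$, so when you transport circles of $\mathcal T$ near $\xi=\pm1$ back to a neighborhood of $\infty$ you should invoke the reflection symmetry of $\P_0$ across the real axis (the inversion in $\{y=0\}$ lies in $\G_0$) to land back in $\{y>0\}$, after which Theorem~\ref{cm1} applies verbatim.
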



\end{document}